\documentclass[12pt,a4paper]{amsart}
\makeatletter
\renewcommand\normalsize{%
    \@setfontsize\normalsize{11.7}{14pt plus .3pt minus .3pt}%
    \abovedisplayskip 10\p@ \@plus4\p@ \@minus4\p@
    \abovedisplayshortskip 6\p@ \@plus2\p@
    \belowdisplayshortskip 6\p@ \@plus2\p@
    \belowdisplayskip \abovedisplayskip}
\renewcommand\small{%
    \@setfontsize\small{9.5}{12\p@ plus .2\p@ minus .2\p@}%
    \abovedisplayskip 8.5\p@ \@plus4\p@ \@minus1\p@
    \belowdisplayskip \abovedisplayskip
    \abovedisplayshortskip \abovedisplayskip
    \belowdisplayshortskip \abovedisplayskip}
\renewcommand\footnotesize{%
    \@setfontsize\footnotesize{8.5}{9.25\p@ plus .1pt minus .1pt}
    \abovedisplayskip 6\p@ \@plus4\p@ \@minus1\p@
    \belowdisplayskip \abovedisplayskip
    \abovedisplayshortskip \abovedisplayskip
    \belowdisplayshortskip \abovedisplayskip}
\ifdefined\pdfpagewidth
\else
\fi
\calclayout
\makeatother

\usepackage{eucal}
\usepackage{lineno}
\usepackage{graphicx} 
\usepackage{comment}
\usepackage{color}
\usepackage{amssymb,graphicx, stmaryrd}
\usepackage[normalem]{ulem}

\usepackage{amsmath,amssymb}  
\usepackage{xparse}

\DeclareRobustCommand{\g}{\ensuremath{\mathrm{g}}}

\usepackage{ytableau} 
\ytableausetup{boxsize=1em}

\usepackage[colorlinks, linkcolor=black, citecolor=blue, linktocpage,urlcolor=blue]{hyperref}

\theoremstyle{definition}
\newtheorem{theorem}{Theorem}[section]

\newtheorem{definition}[theorem]{Definition} 
 
\newtheorem{proposition}[theorem]{Proposition} 
\newtheorem{lemma}[theorem]{Lemma}

\newtheorem{question}[theorem]{Question}

\author{Kyle Broder}
\address{The University of Queensland,  St. Lucia,  QLD 4067, Australia}
\email{k.broder@uq.edu.au}
\author{Herv\'e Gaussier}
\address{Universit\'e Grenoble Alpes, CNRS, IF, F-38000 Grenoble, France}
\email{herve.gaussier@univ-grenoble-alpes.fr}

\title{Curvature of hyperbolic complex manifolds}

\thanks{The first-named author is supported as a postdoc through funding from an ARC discovery grant (DP220102530). The second-named author is supported by the HilbertXField ANR project.}

\usepackage{fancyhdr}
\usepackage{amssymb}

\usepackage{graphicx}

\font\teneur=eurm10

\newcommand{\epsi}{\hbox{\teneur\char32}}  

\usepackage{tikz}
\usepackage{tikz-cd}

\begin{document}

\begin{abstract}
The article addresses the construction and geography of negatively curved metrics on hyperbolic complex manifolds. We introduce a mechanism for constructing complete Kähler metrics with negative bisectional curvature. This applies to some product complex manifolds, thereby resolving a longstanding problem attributed to N. Mok. 

We then construct projective Kobayashi hyperbolic surfaces with negative holomorphic sectional curvature whose Chern slopes $c_1^2/c_2$ realize any $s \in \mathbf{Q} \cap \left( \frac{2}{7}, \frac{2}{3} \right)$. For slopes $s\in \mathbf Q\cap \left( \frac{2}{7},\frac{1}{3} \right)$, the corresponding surfaces admit a Hermitian metric with $\text{HSC}<0$, but their Kähler–Einstein metric cannot have $\text{HSC}<0$. 

We finally construct, for every $s \in \left( \frac{1}{2}, 3 \right)$, a sequence of projective Kobayashi hyperbolic surfaces that do not admit a Hermitian metric of nonpositive holomorphic sectional curvature, whose Chern slopes $c_1^2/c_2$ converge to $s$. 
\end{abstract}

\maketitle

\section{Introduction} 
\noindent A projective surface $X$ is \emph{Kobayashi hyperbolic} if every holomorphic map $\mathbf{C}\to X$ is constant \cite{KobayashiHyperbolicComplexSpaces}. Prototypical examples are compact Hermitian surfaces with negative holomorphic sectional curvature \cite{GrauertReckziegel, GreeneWu} and compact quotients of bounded domains in $\mathbf{C}^2$ (see, e.g., \cite{BoucksomDiverio2021Lang}). Prior to this work, every known compact Hermitian surface with negative holomorphic sectional curvature has Chern slope $c_1^2/c_2 \in \left( \frac{8}{5}, 3 \right]$ (see \cite{KobayashiHyperbolicComplexSpaces, Wu1978_RemarkHSC, ToYeung2011KodairaBisectionalCurvature, Mohsen2022NegativelyCurvedCompleteIntersections}). Moreover, besides $\mathbf{D}^2$, the only bounded domains known to uniformize a compact Kobayashi hyperbolic surface are the ball $\mathbf{B}^2$ and the Bers--Griffiths domains arising as universal covers of Kodaira fibration surfaces \cite{griffiths1971complex, Catanese2017Kodaira}. Among these domains, $\mathbf{D}^2$ was the only example for which the existence of a complete K\"ahler metric with $\mathrm{Bisec}\leq -1$ had remained open. This longstanding question has come to be known as the `Mok problem' (see, e.g., \cite{Zheng1994CurvatureCharacterization, SeshadriZheng2008ComplexProductManifolds, TamYu2010ComplexProductBounds}).

\begin{question}\label{Q:Mok}
    (Mok). Does the bidisk $\mathbf{D}^2$ admit a complete K\"ahler metric with $\text{Bisec} \leq -1$?
\end{question}  Yang \cite{Yang1976NegativeHBC} showed that $\mathbf{D}^2$ does not admit a complete K\"ahler metric with negatively pinched bisectional curvature $-c \leq \text{Bisec} \leq -\tfrac{1}{c}<0$. Mok \cite{Mok1987UniquenessHermitianMetrics, Mok1989MetricRigidity} later proved the analogous statement for Hermitian metrics with negatively pinched bisectional curvature and bounded torsion. Zheng \cite{Zheng1993NonpositivelyCurvedKahler, Zheng1994CurvatureCharacterization} refined Yang's original proof to replace the lower bound on the bisectional curvature by a lower bound on the Ricci curvature. Every known obstruction to the Mok problem still required some lower curvature control in addition to $\text{Bisec} \leq -1$, ultimately in order to invoke the Schwarz lemma \cite{Yau1978GeneralSchwarzLemma}; for this reason, the Ricci lower bound has long been regarded as a technical artifact (cf., \cite{Zheng1994CurvatureCharacterization, TamYu2010ComplexProductBounds}). The strongest such result is due to Tam--Yu \cite{TamYu2010ComplexProductBounds}, who used Takegoshi's Omori--Yau type maximum principle \cite{Takegoshi2006} to show that $\mathbf{D}^n$ does not admit a complete K\"ahler metric with $\text{Bisec} \leq -1$ and $\text{Ric} \geq -(1+r)^2$, where $r := \text{dist}(\cdot,x_0)$. 

The first main theorem gives a general mechanism for constructing complete K\"ahler metrics with $\text{Bisec} \leq -1$ by using a class of holomorphic immersions $f : X \to \mathbf{C}^n$ that are \emph{complete} in the sense that the pullback of the Euclidean metric $f^{\ast} \delta_{\mathbf{C}^n}$ is complete on $X$.

\begin{theorem}\label{thm:generating-mechanism}
Let $\mathcal{B}$ denote the class of complex manifolds $X$ such that for some $n_X \in \mathbf{N}$, there is a complete holomorphic immersion $f: X \to \mathbf{C}^{n_X}$ with bounded image. Then every $X\in\mathcal{B}$ admits a complete Kähler metric with $\text{Bisec} \leq -1$. Moreover, $\mathcal{B}$ is closed under finite products and under pullback by proper holomorphic immersions: if $X\in\mathcal B$ and $p:Y\to X$ is a proper holomorphic immersion, then $Y\in\mathcal B$.
\end{theorem}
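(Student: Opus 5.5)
The plan is to pull back to $X$ a suitably scaled complete ball metric along the immersion, and to get the curvature bound from the fact that bisectional curvature decreases when passing to complex submanifolds. Let $X\in\mathcal B$ with complete holomorphic immersion $f:X\to\mathbf C^{n}$, $n=n_X$, whose image lies in the ball $\mathbf B_R=\{|z|<R\}$. On $\mathbf B_R$ I will take the Bergman-type metric
$$\omega_R=-c\, i\partial\bar\partial \log\left(R^2-|z|^2\right),$$
with $c>0$ to be fixed. This is a Kähler metric of constant holomorphic sectional curvature $-2/c$ (up to normalization conventions). For a metric of constant holomorphic sectional curvature $-2/c$, the bisectional curvature is pinched in $[-2/c,-1/c]$. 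Choosing $c=1$ in these conventions (or any $c$ making the upper bound at most $-1$) gives $\mathrm{Bisec}(\omega_R)\le -1$. A direct computation gives
$$\omega_R\ge c\,R^{-2}\, i\partial\bar\partial|z|^2 ,$$
because the Hessian of $-\log(R^2-|z|^2)$ is $\frac{\delta}{R^2-|z|^2}+\frac{\bar z\otimes z}{(R^2-|z|^2)^2}\ge R^{-2}\delta$.

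Set $\omega_X=f^*\omega_R$, which is a Kähler metric because $f$ is an immersion. The first point is completeness: $\omega_X\ge cR^{-2}f^*\delta_{\mathbf C^n}$, and a metric dominating a complete metric is complete, since Cauchy sequences for the larger metric are Cauchy for the smaller one. The second point is the curvature bound, which is the key step. Since $f$ is an immersion, each point of $X$ has a neighbourhood $U$ on which $f|_U$ is an embedding onto a complex submanifold of $\mathbf B_R$. On $U$ the Gauss equation for Kähler submanifolds applies: for $u,v\in T^{1,0}_xX$,
$$R^X(u,\bar u,v,\bar v)=R^{\mathbf B_R}(f_*u,\overline{f_*u},f_*v,\overline{f_*v})-|\mathrm{II}(u,v)|^2 .$$
Here the second fundamental form $\mathrm{II}$ is a symmetric $\mathbf C$-bilinear form, since holomorphicity kills the $(1,1)$ part. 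Hence $\mathrm{Bisec}(\omega_X)\le \mathrm{Bisec}(\omega_R)|_{f_*TX}\le -1$. I expect this to be the main point to get right. One has to check the sign and the vanishing of the mixed terms in the Gauss equation for bisectional curvature, not just for holomorphic sectional curvature, using the symmetry $\mathrm{II}(u,v)=\mathrm{II}(v,u)$. Because the computation is local, non-injectivity of $f$ causes no trouble.

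For products, let $f:X\to\mathbf C^{n}$ and $g:Y\to\mathbf C^{m}$ witness membership in $\mathcal B$. Then $f\times g:X\times Y\to\mathbf C^{n+m}$ is a holomorphic immersion with bounded image. Its pullback of the Euclidean metric is the product metric $f^*\delta\oplus g^*\delta$, and a product of complete Riemannian metrics is complete. Induction handles finite products. For pullbacks, let $p:Y\to X$ be a proper holomorphic immersion and put $h=f\circ p$, which is a holomorphic immersion with bounded image. The metric $h^*\delta=p^*(f^*\delta)$ is complete by the following argument. Curve lengths are preserved under $p$, so $d_Y(y_1,y_2)\ge d_X(p(y_1),p(y_2))$, and therefore a closed $d_Y$-bounded set $K$ satisfies $p(K)\subset \overline{B_X(x_0,r)}$. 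By Hopf–Rinow on $X$, this closed ball is compact. Properness of $p$ then makes $p^{-1}(\overline{B_X(x_0,r)})$ compact, so $K$ is compact as a closed subset of it. Hopf–Rinow on $Y$ now gives completeness.
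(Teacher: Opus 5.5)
Your proof is correct and follows essentially the same route as the paper: pull back a ball (Bergman-type) metric along the bounded complete immersion, get completeness because it dominates a multiple of the pulled-back Euclidean metric, get the curvature bound from the decrease of bisectional curvature on complex submanifolds, and handle products and proper pullbacks with the product map and the composition $f\circ p$. You use the global bound $\omega_R \ge cR^{-2}\delta$ instead of compact containment in $\tfrac12\mathbf B^N$, invoke the Gauss equation explicitly where the paper only asserts the curvature comparison, and replace the paper's escaping-curve sketches with the product-metric and Hopf--Rinow arguments; these are more careful versions of the same steps, not a different approach.
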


\noindent Theorem~\ref{thm:generating-mechanism} provides a general mechanism for constructing complete K\"ahler metrics with $\text{Bisec} \leq -1$ on product complex manifolds, yielding the following resolution of the Mok problem.

\begin{theorem}\label{thm:Mok}
For every $n \geq 1$, the polydisk $\mathbf{D}^n$ admits a complete K\"ahler metric with $\text{Bisec} \leq -1$. 
\end{theorem}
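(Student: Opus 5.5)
The plan is to deduce Theorem~\ref{thm:Mok} directly from Theorem~\ref{thm:generating-mechanism}. Since $\mathcal{B}$ is closed under finite products and $\mathbf{D}^n = \mathbf{D}\times\cdots\times\mathbf{D}$, it suffices to show $\mathbf{D}\in\mathcal{B}$. Concretely, I need a holomorphic immersion $f:\mathbf{D}\to\mathbf{C}^{N}$ with bounded image such that $f^\ast\delta_{\mathbf{C}^N}$ is complete. Given such an $f$, the $n$-fold product $f\times\cdots\times f:\mathbf{D}^n\to\mathbf{C}^{nN}$ is again a bounded immersion. Its pullback metric is the product of complete metrics, hence complete. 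The first part of Theorem~\ref{thm:generating-mechanism} then yields the complete Kähler metric with $\mathrm{Bisec}\leq -1$. (The product closure is part of the theorem in any case.)

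The substantive input is therefore the existence of a \emph{complete bounded immersed holomorphic disk} in some $\mathbf{C}^N$. This is the complex analogue of the Calabi--Yau problem, which has been solved. Jones (1979) constructed a complete bounded holomorphic immersion $\mathbf{D}\to\mathbf{C}^2$, and in fact a bounded complete embedding into $\mathbf{C}^3$. Later results of Alarcón, Forstnerič and others (e.g.\ Alarcón--Forstnerič, complete bounded embedded complex curves in $\mathbf{C}^2$; Globevnik's complete bounded holomorphic curves and hypersurfaces) give complete bounded embeddings of the disk into the ball of $\mathbf{C}^2$. I would cite Jones' theorem: there is a holomorphic immersion $f=(f_1,f_2):\mathbf{D}\to\mathbf{B}^2$ such that every divergent path $\gamma$ in $\mathbf{D}$ has infinite Euclidean length $\int_\gamma |f'(z)|\,|dz|=\infty$. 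That is exactly completeness of $f^\ast\delta$.

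The main obstacle is precisely this completeness-with-boundedness. One cannot simply take $f(z)=z$, since the Euclidean metric on $\mathbf{D}$ is incomplete. Likewise no single bounded holomorphic function $h$ on $\mathbf{D}$ has $|h'|\,|dz|$ complete: by Fatou's theorem, radial paths to almost every boundary point have finite $|h'|$-length when $h'\in H^1$, and in general a suitable path avoids blowup. So one needs at least two components that compensate along different divergent paths. Jones' construction achieves this via a Runge/Mergelyan-type iteration or explicit Blaschke-product-style functions. If I had to sketch it rather than cite it, I would follow the standard Nadirashvili-style labyrinth argument. Start from an immersion $f_0$, and on an exhausting sequence of annuli $A_k\subset\mathbf{D}$ add holomorphic perturbations $g_k$, small on the inner disk. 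These are chosen by Runge approximation so that on a labyrinth of compact sets in $A_k$ one coordinate has large derivative, forcing every path crossing $A_k$ to gain length at least $1$. The increase in $\sup|f|$ is controlled by Pythagoras, adding $O(1/k)$ orthogonally in squared norm, so that $\sup|f|^2\lesssim\sum 1/k^2<\infty$. Convergence of the perturbations then gives a bounded limit whose intrinsic length across each $A_k$ is at least $1/2$, hence complete. The immersion property is preserved by smallness of $\sum\|g_k'\|$ on compacts, after a Hurwitz-type argument. With this in hand, the reduction in the first paragraph completes the proof.
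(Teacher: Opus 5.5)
Your proposal is correct and matches the paper's argument. Jones' complete bounded holomorphic immersion of the disk gives $\mathbf{D}\in\mathcal{B}$, and closure of $\mathcal{B}$ under finite products together with the first part of Theorem~\ref{thm:generating-mechanism} then yields the metric on $\mathbf{D}^n$.

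Your heuristic asides are not needed, and they are somewhat loose. The $H^1$ remark does not cover a general bounded $h$; the relevant fact there is Bourgain's theorem that every bounded holomorphic function on $\mathbf{D}$ has finite radial variation along some radius. The length-versus-size bookkeeping in your labyrinth sketch is also imprecise. Since you cite Jones, neither affects the proof.
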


\noindent The metrics produced by Theorem~\ref{thm:generating-mechanism} on product complex manifolds necessarily have Ricci curvature unbounded from below. Theorem~\ref{thm:Mok} shows that the quadratic lower bound on the Ricci curvature in \cite{TamYu2010ComplexProductBounds} is not a technical artifact of the proof. This phenomenon is not confined to products: an illustrative example of the reach of Theorem~\ref{thm:generating-mechanism} is that all 27 exotic structures on $\mathbf{S}^7$ (see, e.g.,  \cite{Hirzebruch1968SingularitiesExoticSpheres}) admit a Stein filling whose interior has such a metric. A further consequence of Theorem~\ref{thm:generating-mechanism} is that every known domain biholomorphic to the universal cover of a compact Kobayashi hyperbolic surface admits a complete K\"ahler metric with $\mathrm{Bisec} \leq -1$.

Since a projective Kobayashi hyperbolic surface has ample canonical bundle, the Aubin--Yau theorem gives a Kähler--Einstein metric with negative Ricci curvature \cite{Campana1992TwistorNonhyperbolicity, Aubin, Yau1976}; in particular, its Chern slope satisfies the Bogomolov--Miyaoka--Yau inequality $c_1^2/c_2 \leq 3$ (see, e.g., \cite{barth2004compact}). The Chern slope of the projective surfaces known to be uniformized by a bounded domain have $c_1^2/c_2 \in [2,3]$, while Mohsen's complete intersection surfaces \cite{Mohsen2022NegativelyCurvedCompleteIntersections} have $c_1^2/c_2 \searrow \frac{8}{5}$ (see Proposition~\ref{prop:Mohsen-Chern-numbers}). Consequently, there has yet to appear an example of a projective surface with negative holomorphic sectional curvature which cannot have $\text{Bisec} \leq 0$ (cf., \cite{Diverio2021Kobayashi}). Cheung \cite{Che92} showed that if the K\"ahler--Einstein metric has negative holomorphic sectional curvature, then $c_1^2/c_2 \geq \frac{1}{3}$. There are Kobayashi hyperbolic surfaces with $c_1^2/c_2 < \frac{1}{3}$ (see, e.g., \cite{LiuHorikawa}), but no examples have been found with negative holomorphic sectional curvature. The following theorem addresses both of these problems. 

\begin{theorem}\label{thm:Lefschetz}
For any $s \in \mathbf{Q} \cap \left( \frac{2}{7}, \frac{2}{3} \right)$, there is a projective surface with negative holomorphic sectional curvature such that $c_1^2/c_2 = s$. In particular, for every $s\in \mathbf{Q} \cap \left( \frac{2}{7},\frac{1}{3} \right)$, there is a projective surface admitting a Hermitian metric $\omega$ with $\text{HSC}(\omega)<0$ whose Kähler--Einstein metric $\omega_{\text{KE}}$ does not satisfy $\text{HSC}(\omega_{\text{KE}})<0$.
\end{theorem}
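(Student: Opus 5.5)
The plan is to produce the surfaces as smooth subvarieties of products of compact curves of genus at least $2$, and to use the decreasing property of holomorphic sectional curvature under restriction. Take curves $C_1,\dots,C_k$ of genera $g_i\ge 2$, each with its hyperbolic metric. The product metric on $M=C_1\times\cdots\times C_k$ is Kähler with $\mathrm{HSC}<0$. This holds because for a unit vector $v=(v_1,\dots,v_k)$,
$$\mathrm{HSC}(v)=-\sum_i |v_i|^4,$$
which is strictly negative. For any complex submanifold $S\subset M$, the Gauss equation gives $\mathrm{HSC}_S\le \mathrm{HSC}_M|_{TS}<0$. So every smooth projective surface $S\subset M$ carries a Kähler (in particular Hermitian) metric with $\mathrm{HSC}<0$. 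It is automatically Kobayashi hyperbolic, and by Cheung's result it must have $c_1^2/c_2\ge \tfrac13$ if its Kähler--Einstein metric has $\mathrm{HSC}<0$. The whole problem therefore reduces to geography: arrange $c_1^2(S)/c_2(S)=s$ for every rational $s\in(\tfrac27,\tfrac23)$.

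For the construction I would take $k=3$ and $S\in|L|$ a smooth connected member of a very ample (or at least sufficiently positive, base-point-free) line bundle $L=\boxtimes_i \mathcal O_{C_i}(a_i p_i)$ on $M=C_1\times C_2\times C_3$. Smoothness comes from Bertini and connectedness from Lefschetz/ampleness. If $L$ is only semi-ample, for instance pulled back from a product of two factors, I would instead take $S$ to be a Lefschetz-type pencil or fibre-product member and check smoothness by hand. Adjunction gives $K_S=(K_M+L)|_S$, so
$$c_1^2(S)=(K_M+L)^2\cdot L,\qquad c_2(S)=\bigl(c_2(M)-c_1(M)\cdot L+L^2\bigr)\cdot L.$$
With $K_M=\sum (2g_i-2)F_i$ (where $F_i$ is the pullback of a point class on $C_i$), $c_2(M)=\sum_{i<j}(2-2g_i)(2-2g_j)F_iF_j$, and $F_1F_2F_3=1$, both numbers are explicit integer polynomials in the $g_i$ and $a_i$.

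Next I would study the slope function $\sigma(g,a)=c_1^2/c_2$ on this parameter space. In the regime where one of $L$ and $K_M$ dominates, $\sigma$ tends to limits determined by the ratios $a_i/(g_i-1)$. I expect the infimum over this family to be $\tfrac27$ and the supremum to be $\tfrac23$, both approached but not attained. The computation I would aim for is to show that $\sigma$, as a rational function of a few free integer parameters, takes every rational value in the open interval. I would first fix the asymptotic directions that approach the endpoints, then interpolate using a one-parameter linear family, for example scaling $g_1-1$ and $a_1$ together. Along such a family $c_1^2$ and $c_2$ are affine in the parameter, so hitting a prescribed rational $s=p/q$ becomes a linear Diophantine condition. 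If exact hitting fails, I would compose with unramified covers, which multiply both Chern numbers by the degree and preserve the slope. I would also consider fibre products or further complete intersections, in higher codimension inside products of more curves, to enlarge the set of attainable values.

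The main obstacle I expect is this exact arithmetic step: showing that the attainable slopes fill every rational in $(\tfrac27,\tfrac23)$, rather than a dense subset, while keeping $S$ smooth and connected. In particular, the parameter choices needed near $\tfrac27$ may force $L$ to be non-ample along some factor, and Bertini-type smoothness would then have to be verified directly. Once the geography is established, the second assertion follows immediately. For $s\in\mathbf Q\cap(\tfrac27,\tfrac13)$, the surface has a Hermitian (indeed Kähler) metric with $\mathrm{HSC}<0$ and ample $K_S$. Its Aubin--Yau Kähler--Einstein metric cannot have $\mathrm{HSC}<0$, since Cheung's inequality would then force $c_1^2/c_2\ge\tfrac13$.
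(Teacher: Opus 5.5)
Your reduction to geography is where the argument fails, and the obstruction is not arithmetic. No surface in your class has $c_1^2/c_2<1$, while the entire target interval $(\tfrac27,\tfrac23)$ lies below $1$. For your family $S\in|L|$ with $L=\sum_i a_iF_i$ on $C_1\times C_2\times C_3$ (all $a_i\ge 0$, not all zero), put $k_i=2g_i-2$, $P=k_1k_2a_3+k_1k_3a_2+k_2k_3a_1$, $Q=k_1a_2a_3+k_2a_1a_3+k_3a_1a_2$ and $R=a_1a_2a_3$. Your own adjunction formulas give
\[
c_1^2(S)=2P+4Q+6R,\qquad c_2(S)=P+2Q+6R .
\]
Hence $c_1^2-c_2=P+2Q>0$ and $2c_2-c_1^2=6R\ge 0$, so the slope lies in $(1,2]$. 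Your expected infimum $\tfrac27$ and supremum $\tfrac23$ are therefore wrong. This is structural, not an artifact of the choice of $L$. If $S$ is any compact surface immersed in a product of curves of genus $\ge 2$, then $\Omega_S$ is a quotient of the nef bundle $\bigoplus_i p_i^*K_{C_i}$, so $\Omega_S$ is nef. Equivalently, by the Gauss equation the induced K\"ahler metric has $\mathrm{Bisec}\le 0$, not merely $\mathrm{HSC}<0$. Nonnegativity of Schur polynomials for nef rank-two bundles (Fulton--Lazarsfeld, Demailly--Peternell--Schneider) then gives $c_1^2(S)-c_2(S)=c_1^2(\Omega_S)-c_2(\Omega_S)\ge 0$. \'Etale covers, fibre products and complete intersections in products of more curves all stay in this class, so none of your fallback options can reach slopes below $1$. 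This is exactly the content of the paper's remark that its examples admit no metric with nonpositive bisectional curvature.

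What is missing is a family with slope below $1$, together with a curvature construction that does not pass through $\mathrm{Bisec}\le 0$. The paper takes the double cover $X_n\to\mathbf P^1\times\Gamma$ branched along a smooth $\mathcal D\in|\mathcal O_Y(8\mathcal S)\otimes\pi^*\mathcal A^{\otimes 2}|$, with $\deg\mathcal A=n\ge\gamma+1$. This is a genus-$3$ fibration over $\Gamma$ whose singular fibres each have a single node. It has $c_1^2=8(n+2\gamma-2)$ and $c_2=28n+8\gamma-8$, so the slope is $2(1+r)/(7+r)$ with $r=2(\gamma-1)/n\in(0,2)$. Since this is a M\"obius function of one rational parameter, hitting every rational exactly is immediate. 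The metric is Hermitian, not K\"ahler, and is built in three pieces:
\begin{enumerate}
\item away from the nodes, a fibrewise negatively curved metric plus $b\,f^*g_\Gamma$ with $b\gg 1$;
\item near each node, the pullback of a product of Poincar\'e metrics and $b\,g_\Gamma$ under $(u,v)\mapsto(u,v,uv)$;
\item a gluing of these on long logarithmic collars, where a conformal factor $e^{\varphi}$ with $\varphi=c\,\rho\,\chi(\log\rho)$, $\rho=|u|^2+|v|^2$ and $c\gg1$, absorbs the curvature error created by the cut-off.
\end{enumerate}
Your final step (Aubin--Yau plus Cheung's inequality for slopes below $\tfrac13$) is the same as the paper's, but in your setting there are no surfaces for it to apply to.
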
 \noindent The metrics in Theorem~\ref{thm:Lefschetz} are constructed from a delicate conformal gluing argument. Consequently, the metrics with $\text{HSC}<0$ are Hermitian but not K\"ahler.

There is a general expectation (largely motivated by Chern--Weil theory) that as $c_1^2/c_2$ approaches the Bogomolov--Miyaoka--Yau line, surfaces with these Chern numbers should possess some negative curvature properties (see, e.g., \cite{mostow1980compact, Cheung1988, Hirzebruch1984ChernNumbers, DerauxSeshadri2011Almost}). Hirzebruch \cite{Hirzebruch1984ChernNumbers} constructed toroidal compactifications of ball quotients with Chern slope arbitrarily close to $3$, showing that the Chern numbers alone cannot ensure the surface is Kobayashi hyperbolic. On the other hand, such surfaces still carry K\"ahler metrics with quasi-negative holomorphic sectional curvature \cite{Sarem2023Curvature} (cf.,  \cite{DiverioTrapani2019QuasiNegative}). By contrast, Demailly \cite{Demailly} constructed a projective Kobayashi hyperbolic surface that does not admit any Hermitian metric with negative holomorphic sectional curvature. The Chern numbers of all examples arising from Demailly's construction satisfy $c_1^2/c_2 \in \left ( \frac{1}{3}, 2 \right)$, and hence lie far away from the Bogomolov--Miyaoka--Yau line (see Section~\ref{sec:Demailly}). 

The following theorem shows that there are projective Kobayashi hyperbolic surfaces whose Chern slope is arbitrarily close to the Bogomolov--Miyaoka--Yau line, but cannot admit Hermitian metrics with nonpositive holomorphic sectional curvature.

\begin{theorem}\label{thm:Chern-numbers}
For any $s \in \left( \frac{1}{2}, 3\right)$ there is a sequence $X_m$ of projective Kobayashi hyperbolic surfaces such that $c_1^2(X_m)/c_2(X_m)$ converges to $s$ and, moreover, $X_m$ does not admit a Hermitian metric with nonpositive holomorphic sectional curvature.
\end{theorem}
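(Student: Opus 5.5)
The plan follows Demailly's strategy. A Hermitian metric with $\text{HSC}\le 0$ restricts, by the curvature-decreasing property for subbundles and the Gauss--Bonnet argument, to any compact curve $C\subset X$ as a metric of nonpositive curvature on the normalization, modulo the correction at singular points. Demailly's obstruction shows that if $X$ contains a curve $C$ of geometric genus $g$ with a point of multiplicity $\mu$ satisfying $\mu > 2g-2$ (or, in the nodal version, enough singular branches relative to the Euler characteristic of the normalization), then $X$ cannot carry a Hermitian metric with $\text{HSC}\le 0$. The simplest instance, which I will use, is a rational curve with a singular point, or an elliptic curve with a cusp. Such a curve is not a holomorphic image of $\mathbf{C}$ only if $g\ge 2$. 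So I will instead use a genus $g\ge 2$ curve $\Sigma$ whose image in $X$ has a point where $\mu$ branches meet with $\mu > 2g-2$. Demailly's inequality $\sum_p (m_p-1)\le 2g-2$ for curves in surfaces with $\text{HSC}<0$ has an analogue for $\text{HSC}\le 0$ with a non-strict inequality, so I need $\mu-1 > 2g-2$.

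The first step is the construction. I take a product or fibration $X = \Sigma\times\Sigma'$ of hyperbolic curves, or more flexibly a branched cover of such a product, which is Kobayashi hyperbolic (a product of compact hyperbolic curves, and hyperbolicity is preserved by finite unramified covers and by finite maps to hyperbolic targets). Inside $\Sigma\times\Sigma'$ I take the union of graphs of $\mu$ automorphisms or maps $\Sigma\to\Sigma'$ all passing through a common point. Following Demailly, I take $\Sigma$ with a large automorphism group, or take graphs of $\mu$ distinct holomorphic maps $\phi_i:\Sigma\to\Sigma'$ with $\phi_i(x_0)=y_0$. A cyclic cover of $\Sigma\times\Sigma'$ branched along a suitable divisor then makes the graphs glue into one irreducible curve of small genus with a $\mu$-fold point. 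To make it hyperbolic and projective, I use cyclic covers branched along ample divisors avoiding the special point, and I control the Chern numbers via the standard formulas $c_1^2(Y)=d(K_X+\tfrac{d-1}{d}B)^2$ and $c_2(Y)=d\,c_2(X)-(d-1)e(B)$.

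The second step is to tune the Chern slope. Products of curves have slope $2$. A cyclic $d$-fold cover branched along a smooth curve $B\in|aF_1+bF_2|$ with bidegrees scaling gives, as $a,b,d\to\infty$ with prescribed ratios, slopes filling an interval. I expect a two-parameter family $(a/b, d)$ whose limiting slopes sweep out $(\tfrac12,3)$, by analogy with the Hirzebruch-type branched-cover computations. Reaching near $3$ likely requires a Kodaira fibration or a ball-quotient base rather than a product. So for $s$ near $3$ I would replace $\Sigma\times\Sigma'$ by a ball quotient, or by a Hirzebruch covering with slope near $3$, containing the required singular curve obtained by pulling back through a finite map. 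Hyperbolicity persists under finite maps to hyperbolic surfaces, and the obstructing singular curve persists under étale pullback, provided multiplicity stays large relative to genus, which I would check using Riemann--Hurwitz.

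The main obstacle is reconciling the two demands. The obstructing curve must keep $\mu-1>2g-2$, while the covers used to move the slope tend to increase the genus of the curve's preimage proportionally. I would handle this by making the branch divisor avoid the obstructing curve, or meet it only at its singular point, so that the preimage splits into disjoint copies isomorphic to the original curve. This preserves both $g$ and $\mu$.
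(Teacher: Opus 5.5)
Your plan has two genuine gaps: the obstruction is misread, and the surfaces you propose cannot contain an obstructing curve at all.

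\textbf{The multiplicity term.} In Theorem~\ref{thm:Dem-Obs}, $m_p$ is the multiplicity of the normalization map $\nu:\widetilde C\to X$ at a point $p\in\widetilde C$, i.e.\ one plus the vanishing order of $\text{d}\nu$ at $p$. It is not the number of branches of $C$ through a point of $X$. Suppose $\mu$ smooth branches pass through a common point, for instance graphs of maps $\phi_i:\Sigma\to\Sigma'$, even glued into one irreducible curve. The normalization separates them and is an immersion at each of the $\mu$ preimages. So this point contributes nothing to $\sum_p(m_p-1)$, however large $\mu$ is. The paper makes the same remark about the nodes in Demailly's example. What you need are unibranch singularities where $\nu$ ramifies, such as cusps ($m_p=2$) or $w^a=z^b$ ($m_p=a$).

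\textbf{Finite covers cannot work.} No choice of singularity rescues a construction in which $X$ is a finite cover of $\Sigma\times\Sigma'$, of a ball quotient, or of any compact surface $Z$ with a Hermitian metric of $\text{HSC}\le 0$. Take finite $\phi:X\to Z$ and any curve $\nu:\widetilde C\to X$. Then $\phi\circ\nu$ is nonconstant and $m_p(\phi\circ\nu)\ge m_p(\nu)$. Theorem~\ref{thm:Dem-Obs} applied on $Z$ therefore already gives $2g(\widetilde C)-2\ge\sum_p(m_p(\nu)-1)$. Arranging for preimages to split into copies of curves lying in $Z$ does not help either.

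\textbf{What the paper does instead.} The obstructing curve must be contracted by the map to the negatively curved model; this is the missing idea. Let $S_m$ be the degree-$m^4$ \'etale cover of the Stover ball quotient, with $\beta_m:S_m\to B=\mathcal E^2$. The paper blows up the $\delta$ points of $S_m$ over a point $q_m\in B$. It takes $C_m\in|2a_m\mathcal F|$ with a prescribed multiplicity-$18$ singularity at $q_m$. The strict transform of $\beta_m^*C_m$ is then tangent to order $3$ to each exceptional curve $E_{m,i}$ at six points, and the double cover is branched along it. Each $\pi_m^{-1}(E_{m,i})$ acquires six cusps and has genus-$2$ normalization, so $\sum_p(m_p-1)=6>2=2g-2$. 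Hyperbolicity follows from that of $S_m$ together with $g(\widetilde C_{m,i})=2$.

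\textbf{Slope tuning.} Your slope argument is only conjectural. In the paper one takes $a_m\sim tm^2$. Then $K_{S_m}\cdot\beta_m^*\mathcal F=\lambda m^2$, $(\beta_m^*\mathcal F)^2=\mu$, and the Chern numbers of $S_m$ (which grow like $m^4$) all contribute at order $m^4$. This gives an explicit limit slope $\Psi(t)$ decreasing from $3$ to $\tfrac12$.
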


{
\small 
\subsection*{Acknowledgements}
This work was initiated during the \emph{Complex Analysis, Geometry, and Dynamics (II)} conference in Portoro\v{z}, 2023. Accordingly, both authors are indebted to Franc Forstneri\v{c}. Part of this work was carried out while the authors were visiting the Australian National University and participating in the MATRIX workshop \emph{Analytic and Geometric Methods on Complex Manifolds}. Both authors were supported by the Australia–France International Research Laboratory and are grateful to Sylvie Monniaux and Pierre Portal for hosting their visit. The authors thank Ben Andrews for helpful conversations. The first-named author also thanks Gang Tian for many discussions on Mok’s problem over the years, and Stepan Hudecek, Ramiro Lafuente, and Artem Pulemotov for valuable comments.
}

\section{K\"ahler metrics of negative bisectional curvature}\label{sec:Bisectional}

\subsection{Curvature of Hermitian manifolds}
\noindent Let J denote the complex structure on $X$. A Riemannian metric g on $X$ is Hermitian if $\g(\text{J} \cdot, \text{J} \cdot) = \g(\cdot, \cdot)$. The Chern connection of the Hermitian structure $(\g,\text{J})$ is the unique connection $\nabla$ satisfying $\nabla \g = \nabla \text{J} =0$ and $\nabla^{0,1} = \bar{\partial}$, where $\bar{\partial}$ is the holomorphic structure on $\mathcal{T}_X$. If g is K\"ahler in the sense that the $2$-form $\omega(\cdot, \cdot) : = \g(\text{J} \cdot, \cdot)$ is $\text{d}$-closed, then the Chern connection coincides with the Levi-Civita connection.

In local coordinates $(z_1, ..., z_n)$, we write $\g =  \g_{k \bar{\ell}} \text{d}z^k \otimes \text{d}\bar{z}^{\ell}$, and the components of the Chern connection are then given by $\Gamma_{ij}^{k} = \g^{k \bar{\ell}} \partial_i \g_{j \bar{\ell}}$. We denote by R the curvature of the Chern connection of a Hermitian metric g. In local coordinates, we have $\text{R}_{i \bar{j} k}{}^p = -\partial_{\bar{j}} \Gamma_{ik}^p$. Hence, $\text{R}_{i \bar{j} k \bar{\ell}} : = \g_{p \bar{\ell}} \text{R}_{i \bar{j} k}{}^p$ affords the local expression  \begin{eqnarray*}
    \text{R}_{i \bar{j} k \bar{\ell}} &=& - \frac{\partial^2 \g_{k \bar{\ell}}}{\partial z_i \partial \bar{z}_j} + \g^{p \bar{q}} \frac{\partial \g_{k \bar{q}}}{\partial z_i} \frac{\partial \g_{p \bar{\ell}}}{\partial \bar{z}_j}.
\end{eqnarray*}

\textit{Convention.} Unless otherwise stated, the curvature of a Hermitian metric is always understood to be the curvature of the Chern connection.

\begin{definition}
The \emph{bisectional curvature} of a Hermitian metric g is defined by \begin{eqnarray*}
    \text{Bisec}(\xi,\eta) & : = & \frac{1}{| \xi |_{\g}^2 | \eta |_{\g}^2} \sum_{i,j,k,\ell} \text{R}_{i \bar{j} k \bar{\ell}} \xi^i \bar{\xi}^j \eta^k \bar{\eta}^{\ell}, \qquad [\xi], [\eta] \in \mathbf{P}(\mathcal{T}_{X,x}).
\end{eqnarray*} 
\end{definition}

    The Mori \cite{Mori1979ProjectiveManifolds} and Siu--Yau \cite{SiuYau1980PositiveBisectionalCurvature} resolutions of the Frankel--Hartshorne conjectures assert that compact K\"ahler manifolds with $\text{Bisec} >0$ are biholomorphic to $\mathbf{P}^n$. More generally, the compact K\"ahler manifolds with $\text{Bisec} \geq 0$ are uniformized by the product of Euclidean spaces, projective spaces, and Hermitian symmetric spaces of compact type \cite{Mok1988Uniformization}.

    Compact Hermitian manifolds with $\text{Bisec}<0$ are projective since $\text{Bisec}<0$ forces the (first Chern) Ricci curvature $\text{Ric}^{(1)} : =_{\text{loc}} - \sqrt{-1} \partial \bar{\partial} \log \omega^n$ to be negative. A sign on the bisectional curvature forces the same sign on the \emph{holomorphic sectional curvature} $\text{HSC}(\xi) : = \text{Bisec}(\xi,\xi)$. In particular, if $X$ admits a complete Hermitian metric with $\text{Bisec} \leq - 1$, then $X$ is complete hyperbolic in the sense that the Kobayashi pseudodistance is non-degenerate and Cauchy complete \cite{KobayashiHyperbolicComplexSpaces}. 

    Quotients and (locally) branched covers of the ball $\mathbf{B}^n$ yield K\"ahler metrics with $\text{Bisec} <0$ (see \cite{mostow1980compact, Deraux2005NegativelyCurved, StoverToledo2022ResidualFiniteness, GuenanciaHamenstadt2025KahlerEinstein}). Kodaira fibration surfaces admit such a metric \cite{ToYeung2011KodairaBisectionalCurvature}, and compact simply connected examples were constructed in \cite{Mohsen2022NegativelyCurvedCompleteIntersections}. If $\Omega \subset \mathbf{C}^n$ is a bounded strictly pseudoconvex domain with smooth boundary, then Fefferman's asymptotic expansion for the Bergman kernel \cite{Fefferman1974} implies that the Bergman metric has negatively pinched bisectional curvature near $\partial \Omega$ \cite{Klembeck1978BergmanCurvature}; the same holds for the Cheng--Yau K\"ahler--Einstein metric \cite{ChengYau1980}.

    If $\Omega \Subset \mathbf{C}^n$ admits a complete Hermitian metric satisfying $\text{Bisec} \leq 0$, then $\Omega$ is pseudoconvex \cite{Griffiths1971TwoTheorems, Shiffman1971Extension}. On the other hand, the universal cover of a Kodaira fibration surface shows that even negatively pinched bisectional curvature is not sufficient to force a domain in $\mathbf{C}^n$ to be biholomorphic to a strictly pseudoconvex domain \cite{ToYeung2011KodairaBisectionalCurvature, griffiths1971complex, GaussierSeshadri2019}.

The interaction between curvature and product structures is a classical theme in geometry. Preissmann's theorem implies that compact Riemannian manifolds with negative sectional curvature cannot be homeomorphic to products. In complex geometry, a richer interaction emerges due to the presence of moduli. Indeed, the results of Yang \cite{Yang1977Fibered} and To--Yeung \cite{ToYeung2011KodairaBisectionalCurvature} imply that for smooth hyperbolically fibered surfaces $f : X \to \Gamma$, where $g(\Gamma), g(X_{\gamma}) \geq 2$, the existence of a Hermitian metric with $\text{Bisec}<0$ is equivalent to $X$ being non-isotrivial.

\subsection{Proof of Theorem~\ref{thm:generating-mechanism} and Mok's problem}

\noindent The mechanism behind the construction of complete K\"ahler metrics with $\text{Bisec} \leq -1$ (namely, Theorem~\ref{thm:generating-mechanism}) arises out of a completely separate branch of complex analysis that studies the flexibility and rigidity of certain classes of holomorphic maps. Recall from the introduction that a holomorphic immersion $f : X \to \mathbf{C}^{N}$ is \emph{complete} if $f^{\ast} \delta_{\mathbf{C}^N}$ is a complete metric on $X$. Suppose $f : X \to \mathbf{C}^N$ is a complete holomorphic immersion with bounded image. Set $R:=\sup_{X} | f | <\infty$, and define $f_{\lambda} := \lambda f$, where $\lambda := 1/2R$. Then $f_{\lambda}(X)\subset \tfrac{1}{2} \mathbf B^N$. Let $\g_{\text{B}}$ denote the Bergman metric on $\mathbf{B}^N$. Since $\tfrac{1}{2}\mathbf B^N$ is compactly contained in $\mathbf B^N$, there is a constant $C>0$ such that $$C^{-1} \delta_{\mathbf{C}^N} \  \leq \ \g_{\text{B}} \ \leq \ C \delta_{\mathbf{C}^N}, \qquad \text{on} \ \tfrac{1}{2} \mathbf{B}^N.$$ Pulling back by $f_{\lambda}$,  $$C^{-1} f_{\lambda}^{\ast} \delta_{\mathbf{C}^N} \ \leq \ f_{\lambda}^{\ast} \g_{\text{B}} \ \leq \ C f_{\lambda}^{\ast} \delta_{\mathbf{C}^N}, \qquad \text{on} \ X.$$ Since $f_{\lambda}^{\ast} \delta_{\mathbf{C}^N} = \lambda^2 f^{\ast} \delta_{\mathbf{C}^N}$ is complete,  the inequality $f_{\lambda}^{\ast} \g_{\text{B}} \geq C^{-1} f_{\lambda}^{\ast} \delta_{\mathbf{C}^N}$ implies that $f_{\lambda}^{\ast} \g_{\text{B}}$ is complete. The bisectional curvatures of $f_{\lambda}^{\ast} \g_{\text{B}}$ and $\g_{\text{B}}$ are related by \begin{eqnarray*}
    \text{Bisec}(f_{\lambda}^{\ast} \g_{\text{B}}) & \leq &  -c \ < \  0.
\end{eqnarray*} In particular, any complex manifold $X$ admitting a complete holomorphic immersion $X \to \mathbf{C}^N$ with bounded image admits a complete K\"ahler metric with $\text{Bisec} \leq -1$. 

\textit{Remark.} It is easy to mistakenly assume that this is the crux of the proof of Theorem~\ref{thm:generating-mechanism}. Indeed, several classes of holomorphic maps, for example, proper holomorphic immersions into some $\mathbf{B}^N$ have the above property. But considering proper holomorphic immersions is futile for trying to address the Mok problem since the class of complex manifolds admitting a proper holomorphic immersion into the ball is not closed under products. Moreover, for $n \geq 2$, there is no proper holomorphic map $\mathbf{D}^n \to \mathbf{B}^m$ (see \cite{Forstneric1993ProperHolomorphicMappingsSurvey}). 

By contrast, complete holomorphic immersions with bounded image circumvent both these shortfalls of proper holomorphic immersions. Let $f_{\nu} : X_{\nu} \to \mathbf C^{N_{\nu}}$ be complete holomorphic immersions with bounded image. Define $f :  X_1 \times \cdots \times X_k \to \mathbf{C}^{N_1 + \cdots + N_k}$, by $f(z_1, ..., z_k) : = (f_1(z_1), ..., f_k(z_k))$. Then $f$ is a holomorphic immersion with bounded image. For each $1 \leq \nu \leq k$, denote by $\pi_{\nu} : X_1 \times \cdots \times X_k \to X_{\nu}$ the projection onto $X_{\nu}$. Then $f^{\ast} \delta= \pi_1^{\ast}(f_1^{\ast} \delta) + \cdots + \pi_k^{\ast} (f_k^{\ast} \delta)$. The metric is complete: If $\gamma$ escapes every compact subset of the product, then at least one projection $\pi_{\nu} \circ \gamma$ escapes every compact subset of $X_{\nu}$, and hence, has infinite length with respect to $f_{\nu}^{\ast} \delta$, and thus, has infinite length with respect to $f^{\ast} \delta$.

Let $f : X \to \mathbf{C}^N$ be a complete holomorphic immersion with bounded image, and $p : Y \to X$ a proper holomorphic immersion. The composition $f \circ p : Y \to \mathbf{C}^N$ is a holomorphic immersion with bounded image. To see that it is complete, suppose $\gamma : [0,1) \to Y$ escapes every compact subset of $Y$. Since $p : Y \to X$ is proper, $p \circ \gamma$ leaves every compact subset of $X$. Since the length of $\gamma$ with respect to $(f \circ p)^{\ast} \delta$ is the length of $p \circ \gamma$ with respect to $f^{\ast} \delta$, and the latter is infinite, $f \circ p$ is a complete holomorphic immersion with bounded image. This proves Theorem~\ref{thm:generating-mechanism}. 

To produce a complete K\"ahler metric with $\text{Bisec} \leq -1$ on the polydisk $\mathbf{D}^n$, it suffices to produce a complete holomorphic immersion $\mathbf{D} \to \mathbf{C}^N$ with bounded image. Several immersions of this type have been constructed by Jones \cite{Jones1979-Complete-Bounded-Submanifold} (see also \cite{DrinovecDrnovsek2015CompleteProperEmbeddings}). Hence, $\mathbf{D} \in \mathcal{B}$, and since $\mathcal{B}$ is closed under finite products, this yields the desired resolution of the Mok problem.

From Theorem~\ref{thm:generating-mechanism}, several examples of complete hyperbolic domains can be shown to admit complete K\"ahler metrics with $\text{Bisec} \leq -1$.

\begin{theorem}
    Bounded pseudoconvex domains in $\mathbf{C}^2$ with real-analytic boundary, bounded strictly pseudoconvex domains in $\mathbf{C}^n$ with $\mathcal{C}^2$-boundary, bounded analytic polyhedra, and finite products of these spaces admit complete K\"ahler metrics with $\text{Bisec} \leq -1$.
\end{theorem}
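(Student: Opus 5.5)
By Theorem~\ref{thm:generating-mechanism}, it suffices to show that each of the three classes of domains lies in $\mathcal{B}$. Theorem~\ref{thm:generating-mechanism} then gives that the class is closed under finite products, so the product statement follows at once. Each domain $\Omega$ is bounded, so every such $\Omega$ is hyperbolic. The task is therefore to construct, for each $\Omega$, a complete holomorphic immersion (or embedding) $f:\Omega\to\mathbf{C}^N$ with bounded image. I will not try to build this by hand. Instead I will invoke the existing theory of complete bounded immersions, in the spirit of Jones' construction for the disk and its generalisations by Alarc\'on, Forstneri\v{c}, Drinovec Drnov\v{s}ek, Globevnik and others. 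This theory produces complete bounded holomorphic embeddings, and even complete proper embeddings into balls, for broad classes of domains.

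\textbf{Strictly pseudoconvex domains with $\mathcal{C}^2$ boundary.} Here I plan to use the following known result. A relatively compact strictly pseudoconvex domain $\Omega$ with $\mathcal{C}^2$ boundary in a Stein manifold admits a complete proper holomorphic embedding into a ball $\mathbf{B}^N$ for $N$ sufficiently large. This is due to Drinovec Drnov\v{s}ek and Forstneri\v{c} \cite{DrinovecDrnovsek2015CompleteProperEmbeddings}, building on Globevnik's results and using the Riemann--Hilbert method with exposed points. Such an embedding lies in $\mathbf{B}^N$, so it is bounded, and it is complete by construction. Hence $\Omega\in\mathcal{B}$.

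\textbf{Bounded pseudoconvex domains in $\mathbf{C}^2$ with real-analytic boundary.} For these I would reduce to the strictly pseudoconvex case via the Remark and the pullback clause of Theorem~\ref{thm:generating-mechanism}. By Diederich--Forn\ae ss, such domains are of finite type. Moreover, they admit a proper holomorphic map (for instance via a bounded plurisubharmonic exhaustion and a Fornaess-type embedding) into a strictly convex domain $D\subset\mathbf{C}^M$. I would prefer an embedding, so that $p$ is an immersion. Pulling back a complete bounded immersion of $D$ by $p$ then gives the claim.

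\textbf{Bounded analytic polyhedra.} An analytic polyhedron $P=\{z\in U:|h_j(z)|<1,\ j=1,\dots,k\}$, with the $h_j$ holomorphic near $\overline{P}$, admits the proper holomorphic map $h=(h_1,\dots,h_k):P\to\mathbf{D}^k$. Adjoining the coordinate functions, I take $p(z)=(h(z),\varepsilon z)$. This is a proper holomorphic embedding into $\mathbf{D}^k\times B$, where $B$ is a large ball, so properness is not lost. Since $\mathbf{D}^k\times B\in\mathcal{B}$ by Theorem~\ref{thm:Mok}'s argument and product closure, pullback gives $P\in\mathcal{B}$.

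\textbf{Main obstacle.} The hardest step is the real-analytic pseudoconvex case in $\mathbf{C}^2$. Weakly pseudoconvex points prevent a direct application of the strictly pseudoconvex embedding theorem, and producing a proper holomorphic \emph{immersion} into a strictly pseudoconvex domain is delicate. My first attempt would be to cite existing results on complete bounded embeddings of pseudoconvex finite-type domains, which adapt the exposing-points technique using Diederich--Forn\ae ss local peak functions. Failing that, I would use Forn\ae ss's embedding of convexifiable domains.
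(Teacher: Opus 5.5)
\textbf{Strictly pseudoconvex and polyhedral cases.} These match the paper. For strictly pseudoconvex domains you use the complete proper embedding into a ball of Drinovec Drnov\v{s}ek--Forstneri\v{c}. For polyhedra you use the proper immersion $z\mapsto (h(z),\varepsilon z)$ into a product of discs and a ball, followed by the pullback clause. The product statement does follow from product closure.

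\textbf{The real-analytic case is not proved.} This is the case of bounded pseudoconvex $\Omega\subset\mathbf{C}^2$ with real-analytic boundary, which you leave as an unresolved ``main obstacle''. The route you sketch does not work, for three reasons.
\begin{enumerate}
\item A bounded plurisubharmonic exhaustion (Diederich--Forn\ae ss) gives hyperconvexity. It does not produce a proper holomorphic map into anything.
\item Forn\ae ss's embedding into strictly convex domains needs strict pseudoconvexity or convexifiability. A weakly pseudoconvex real-analytic boundary need not be convexifiable even locally: the Kohn--Nirenberg example is a real-analytic pseudoconvex hypersurface with no local holomorphic support function. So your fallback fails in general.
\item For your ``first attempt'' you name no result that actually gives complete bounded immersions of weakly pseudoconvex domains. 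It is a placeholder, not an argument.
\end{enumerate}
Your claim that such $\Omega$ map properly into a strictly convex domain happens to be true, but only via a deep theorem you do not invoke.

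\textbf{The missing input.} You need the theorem of Noell--Stens\o nes \cite{NoellStensones1990Proper}: every such $\Omega$ admits a proper holomorphic map $f:\Omega\to\mathbf{B}^3$. With it, the difficulty you single out, getting an \emph{immersion}, disappears by the device you already use for polyhedra. For $\varepsilon>0$ small, $\tilde f:=(f,\varepsilon z):\Omega\to\mathbf{B}^3\times\mathbf{D}^2$ is:
\begin{itemize}
\item a holomorphic immersion, because of the second component;
\item proper, because if $z_\nu$ leaves every compact subset of $\Omega$, then $f(z_\nu)$ leaves every compact subset of $\mathbf{B}^3$.
\end{itemize}
Now $\mathbf{B}^3\in\mathcal{B}$ by the strictly pseudoconvex case, and $\mathbf{D}^2\in\mathcal{B}$ by Jones's disc and product closure. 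Hence $\mathbf{B}^3\times\mathbf{D}^2\in\mathcal{B}$, and the pullback clause of Theorem~\ref{thm:generating-mechanism} gives $\Omega\in\mathcal{B}$. You never need an embedding into a strictly convex domain: any proper holomorphic map into a member of $\mathcal{B}$ suffices once you adjoin $\varepsilon z$.
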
 \begin{proof}
    Let $\Omega \Subset \mathbf{C}^2$ be a bounded pseudoconvex domain with real-analytic boundary. Noell--Stensønes \cite{NoellStensones1990Proper} showed that there is a proper holomorphic map $f: \Omega \to \mathbf{B}^3$. Hence, for $\varepsilon>0$ sufficiently small, the map $\tilde{f} : = (f, \varepsilon \text{Id}) : \Omega \to \mathbf{B}^3 \times \mathbf{D}^2$ is a proper holomorphic immersion. Since $\mathbf{B}^3 \in \mathcal{B}$ and $\mathbf{D}^2 \in \mathcal{B}$, Theorem~\ref{thm:generating-mechanism} implies that $\mathbf{B}^3 \times\mathbf{D}^2 \in \mathcal{B}$. Theorem~\ref{thm:generating-mechanism} implies that $\Omega \in \mathcal{B}$, and hence, $\Omega$ and its finite products admit complete K\"ahler metrics with $\text{Bisec} \leq -1$.

Let $\Omega \Subset \mathbf{C}^n$ be a bounded strictly pseudoconvex domain with $\mathcal{C}^2$-boundary. Drinovec--Drnov\v{s}ek \cite{DrinovecDrnovsek2015CompleteProperEmbeddings} constructed a complete proper holomorphic embedding from $\Omega$ into the unit ball in $\mathbf{C}^N$ for $N$ large enough. Theorem~\ref{thm:generating-mechanism} implies that $\Omega$ and its finite products admit complete K\"ahler metrics with $\text{Bisec} \leq -1$. Another construction of a complete K\"ahler metric on $\Omega$ is provided by Bakkacha \cite{Bakkacha2024NegativeHolomorphic}.

    A \emph{bounded analytic polyhedron} is a domain $\Omega \Subset \mathbf C^n$ for which there exist a neighborhood $U$ of $\overline{\Omega}$ and holomorphic functions $f_1,\dots,f_N \in \mathcal O(\mathcal{U})$ such that  $$ \Omega = \left \{ z \in U : | f_1(z) | <1, ..., | f_N(z) | < 1 \right \},$$ and such that for every $\zeta \in \partial \Omega$ one has $\max_{1\leq j\le N}|f_j(\zeta)|=1$.
    
    Analytic polyhedra provide a rich class of examples with piecewise Levi-flat boundaries. They offer a particularly tractable framework for approximation and extension problems, and play an important role in applications of the Oka--Cartan theory.

    Let $\Omega\Subset\mathbf{C}^n$ be a bounded analytic polyhedron, so that there are a neighborhood $U$ of $\overline\Omega$ and holomorphic functions $f_1,\ldots,f_N\in\mathcal{O}(U)$ with \[\Omega \ = \ \{ z \in U: |f_1(z)|<1,\ldots,|f_N(z)|<1 \} \] and $\max_j |f_j(\zeta)|=1$ for every $\zeta\in\partial\Omega$. Since $\Omega$ is bounded, we may choose $\varepsilon>0$ sufficiently small such that $\varepsilon \overline{\Omega} \subset \mathbf{D}^n$. Let $\Psi : \Omega \to \mathbf{D}^{N+n}$ be the holomorphic map $\Psi(z) : = (f_1(z), ..., f_N(z), \varepsilon z)$. Since $\text{rank}(\text{d}\Psi) =n$ at every point, $\Psi$ is a holomorphic immersion. The boundary condition implies that $\Psi$ is proper. Indeed, if $z_{\nu} \to \partial\Omega$, then for some $j$, after passing to a subsequence, one has $|f_j(z_\nu)|\to 1$, and hence $\Psi(z_\nu)$ leaves every compact subset of $\mathbf{D}^{N+n}$. Hence, Theorem~\ref{thm:generating-mechanism} implies $\Omega \in \mathcal{B}$.

\end{proof}

\begin{theorem}\label{thm:combined-refined}
In every real dimension $4m-1 \geq 7$, there exist exotic spheres $\mathbf{S}^{4m-1}$ with Stein fillings whose interiors admit complete K\"ahler metrics with $\mathrm{Bisec} \leq -1$.
\end{theorem}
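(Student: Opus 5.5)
The plan is to realize the exotic spheres as links of Brieskorn singularities and to obtain the Stein fillings as Milnor fibres, which are affine algebraic and hence admit proper embeddings into polydisks after restriction to a ball. Concretely, for $a=(a_0,\dots,a_{2m})$ consider
\[
P_a(z) \ = \ z_0^{a_0}+\cdots+z_{2m}^{a_{2m}}, \qquad \Sigma_a \ = \ \{P_a=0\}\cap \mathbf S^{4m+1}\subset \mathbf C^{2m+1}.
\]
By Brieskorn and Hirzebruch \cite{Hirzebruch1968SingularitiesExoticSpheres}, with $a=(3,6k-1,2,\dots,2)$ the link $\Sigma_a$ is a homotopy sphere of dimension $4m-1$. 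It is the boundary of a parallelizable manifold with signature $\pm 8k$, up to sign conventions. As $k$ varies, this produces exotic spheres in every dimension $4m-1\geq 7$; for $m=2$ the values $k=1,\dots,28$ give all 28 smooth structures on $\mathbf S^7$.

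For the filling I take the Milnor fibre $W_\varepsilon=\{P_a=\varepsilon\}\cap \overline{\mathbf B}^{2m+1}$ with $\varepsilon\neq 0$ small. By Milnor's fibration theorem it is a compact manifold with boundary diffeomorphic to $\Sigma_a$. It is a Stein domain because $|z|^2$ restricted to the smooth affine hypersurface $\{P_a=\varepsilon\}$ is strictly plurisubharmonic. For small $\varepsilon$ the sphere is transverse to the fibre, so $|z|^2$ has no critical points near the boundary, and the boundary is a strictly pseudoconvex hypersurface. I would check that the induced contact structure is the standard Milnor fillable one, though that is not strictly needed. The interior is $\Omega=\{P_a=\varepsilon\}\cap \mathbf B^{2m+1}$, a smooth closed complex submanifold of the ball of dimension $2m$.

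To put a metric on $\Omega$, I would take the inclusion $\iota:\Omega\hookrightarrow \mathbf B^{2m+1}$. It is a proper holomorphic embedding, since $\Omega$ is closed in $\mathbf B^{2m+1}$. Since $\mathbf B^{2m+1}\in\mathcal B$, for instance by Drinovec-Drnov\v{s}ek \cite{DrinovecDrnovsek2015CompleteProperEmbeddings} as in the previous theorem, the pullback part of Theorem~\ref{thm:generating-mechanism} gives $\Omega\in\mathcal B$. Hence $\Omega$ carries a complete Kähler metric with $\mathrm{Bisec}\leq -1$. Alternatively, $\iota$ composed with $\mathbf B^{2m+1}\subset \mathbf D^{2m+1}$ is proper only onto its image in the ball, so the route through the ball is the cleaner one.

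The main obstacle is the topological input: that the Brieskorn links are genuinely exotic in every dimension $4m-1\geq 7$ and that the filling's boundary is exactly that sphere. I would cite Brieskorn and Hirzebruch here rather than reprove it. The condition to verify is that the graph of exponents has the right connectivity, so the link is a homotopy sphere, and that the signature $8k$ of the parallelizable filling is not a multiple of the order of $bP_{4m}$ for suitable $k$. By Kervaire–Milnor, $bP_{4m}$ is nontrivial for all $m\geq 2$, so some $k$ gives a nonstandard sphere. A minor point to check is that the interior of the filling, the open Milnor fibre in the ball, is biholomorphic to what is meant by ``the interior of the Stein filling.'' This holds by construction.
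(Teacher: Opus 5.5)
Your proposal is correct and follows the paper's proof essentially step for step. You realize the exotic $(4m-1)$-sphere as the link of a Brieskorn--Pham polynomial in $2m+1$ variables and take the Milnor fibre as its Stein filling; its interior is a closed complex submanifold of a ball, hence properly embedded, and the conclusion follows from $\mathbf{B}^{2m+1}\in\mathcal{B}$ and the pullback part of Theorem~\ref{thm:generating-mechanism}. Your explicit check of the topology fills in what the paper leaves to Brieskorn and Milnor: the exponents $(3,6k-1,2,\dots,2)$ give two isolated vertices in the Brieskorn graph, the filling has signature $\pm 8k$, and $bP_{4m}$ is nontrivial for $m\geq 2$.
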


\begin{proof}

Let $f:(\mathbf{C}^{n+1},0)\to(\mathbf{C},0)$ be a holomorphic germ, and for $\delta>0$ sufficiently small write $\mathbf{S}^{2n+1}_{\delta}:=\partial \mathbf{B}^{n+1}_{\delta}$. If $0\in V(f):=f^{-1}(0)$ is a regular point, then for $\delta>0$ sufficiently small the link $\mathsf{K}_f :=V(f)\cap \mathbf{S}^{2n+1}_{\delta}$ is diffeomorphic to $\mathbf{S}^{2n-1}$, embedded in $\mathbf{S}^{2n+1}_{\delta}$ in the standard unknotted way. Thus, no interesting topology occurs at a smooth point. By contrast, isolated singular points can give rise to highly nontrivial links. For example, let 
$$
f(z_0,\dots,z_4) \ :=\ z_0^2+z_1^2+z_2^2+z_3^3+z_4^{6k-1},
$$
where $1\leq k\leq 28$. Then $0\in\mathbf{C}^5$ is an isolated critical point of $f$, and for $\delta>0$ sufficiently small the link $\mathsf{K}_f$ is a smooth closed $7$-manifold homeomorphic to $\mathbf{S}^7$. As $k$ ranges from $1$ to $28$, these Brieskorn spheres realize all $28$ oriented smooth structures on $\mathbf{S}^7$ \cite{Brieskorn}. More generally, if $f:(\mathbf{C}^{n+1},0)\to(\mathbf{C},0)$ has an isolated critical point at $0$, then for $0<|\varepsilon|\ll \delta\ll 1$ the associated \emph{Milnor fiber} is $F_{f,\varepsilon}:=f^{-1}(\varepsilon)\cap \overline{\mathbf{B}^{n+1}_{\delta}}$. It is a smooth compact manifold with boundary diffeomorphic to the link $\mathsf{K}_f = f^{-1}(0)\cap \mathbf{S}^{2n+1}_{\delta}$. Moreover, since $f^{-1}(\varepsilon)\subset \mathbf{C}^{n+1}$ is a smooth affine hypersurface, $F_{f,\varepsilon}$ is a Stein filling of its boundary link.

Let $f : \mathbf{C}^{n+1} \to \mathbf{C}$ be a Brieskorn--Pham polynomial (see \cite[Chapter 9]{Milnor}). For each even $n=2m\geq 4$, such a Brieskorn--Pham polynomial can be chosen so that the link $\mathsf{K}_f$ is an exotic $(4m-1)$-sphere. The Milnor fiber $F_{f,\varepsilon}$ is a compact Stein filling of $\mathsf{K}_f$. The interior $F_{f,\varepsilon}^{\circ}$ is a smooth complex hypersurface properly embedded in $\mathbf{B}^{n+1}_{\delta}$. Since $\mathbf{B}^{n+1}_{\delta}\in\mathcal{B}$, Theorem~\ref{thm:generating-mechanism} implies that $F_{f,\varepsilon}^{\circ} \in\mathcal{B}$.
\end{proof}

Examples of open complex manifolds that are Kobayashi hyperbolic but not Carathéodory hyperbolic (and hence not biholomorphic to any bounded domain in $\mathbf C^n$) were constructed in \cite{ShcherbinaZhang2021KobayashiBergmanComplete} as unbounded strictly pseudoconvex domains. 
In dimension one, $\mathbf{P}^1 - \{ 0, 1, \infty\}$ admits a complete K\"ahler metric with $\text{Bisec} \equiv -1$, but in dimensions $n \geq 2$, to our knowledge, no example of complex manifolds that admits a complete K\"ahler metric with $\text{Bisec} \leq -1$ and are not Carathéodory hyperbolic have appeared in the literature.

\begin{theorem}\label{thm:refined}
\begin{enumerate}
\item Let $\Sigma_1, ..., \Sigma_k$ be compact Riemann surfaces. For each $1 \leq j \leq k$, there exist Cantor sets $\mathsf{C}_j \subset \Sigma_j$ such that 
$$
(\Sigma_1 \setminus \mathsf{C}_1) \times \cdots \times (\Sigma_k \setminus \mathsf{C}_k)
$$
admits a complete K\"ahler metric with $\mathrm{Bisec} \leq -1$.

\item There exist open complex manifolds in any dimension that admit complete K\"ahler metrics with $\mathrm{Bisec} \leq -1$ and are not Carath\'eodory hyperbolic.
\end{enumerate}
\end{theorem}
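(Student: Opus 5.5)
For both parts the plan is to show that the relevant manifolds lie in the class $\mathcal{B}$ and then apply Theorem~\ref{thm:generating-mechanism}, which supplies the metric and closure under finite products. So the real work is to produce complete holomorphic immersions with bounded image. The input I would use is from the theory of complete bounded complex curves. Alarc\'on and Forstneri\v{c} (and Alarc\'on--Globevnik--L\'opez) showed that every compact Riemann surface $\Sigma$ contains a Cantor set $\mathsf{C}\subset\Sigma$ such that $\Sigma\setminus\mathsf{C}$ admits a complete proper holomorphic embedding into $\mathbf{B}^2$. In particular there is a complete holomorphic immersion $\Sigma\setminus\mathsf{C}\to\mathbf{C}^2$ with bounded image. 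This is the external input I would cite; it plays the same role for part (1) that Jones's immersion played in the proof of Theorem~\ref{thm:Mok}.

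\textbf{Part (1).} For each $j$, choose $\mathsf{C}_j\subset\Sigma_j$ as above, so that $\Sigma_j\setminus\mathsf{C}_j\in\mathcal{B}$. By the product closure in Theorem~\ref{thm:generating-mechanism}, the product $(\Sigma_1\setminus\mathsf{C}_1)\times\cdots\times(\Sigma_k\setminus\mathsf{C}_k)$ lies in $\mathcal{B}$, and the first assertion of the same theorem gives the complete K\"ahler metric with $\mathrm{Bisec}\leq -1$. Completeness needs no separate check, since the product argument given after the Remark above already handles it.

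\textbf{Part (2).} Take $\Sigma_j$ of genus $\geq 1$, or more simply take $k=n$ copies of one compact Riemann surface $\Sigma$ of positive genus, and let $X=\prod_j(\Sigma_j\setminus\mathsf{C}_j)$ as in part (1). Then $X$ has the required metric, and it remains to show that $X$ is not Carath\'eodory hyperbolic. It suffices to find two distinct points of $X$ that no bounded holomorphic function separates. I would reduce to one factor: the slice $\{z\}\times\prod_{i\ge2}$ is holomorphically embedded, so a separating bounded function on $X$ restricts to a separating bounded function on $\Sigma_1\setminus\mathsf{C}_1$ for points lying in that slice. It is therefore enough that $\Sigma_1\setminus\mathsf{C}_1$ admits no nonconstant bounded holomorphic function. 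By Riemann's removable singularity theorem, such functions extend across a polar (logarithmic capacity zero) compact set, and then they are constant on the compact surface $\Sigma_1$.

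\textbf{Main obstacle.} The hard step is compatibility between the two requirements on the Cantor sets. The Cantor set must be large enough to admit a complete bounded embedding, yet small enough (polar, or at least removable for bounded holomorphic functions, $H^\infty$-removable) to kill bounded holomorphic functions. These pull in opposite directions: a complete bounded immersion is itself a vector of bounded holomorphic functions, and it is nonconstant. So the Cantor set must be non-removable, and the factor-by-factor argument above fails. The plan must therefore change.

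Carath\'eodory hyperbolicity also fails when bounded holomorphic functions do not separate points. So I would choose $\Sigma$ and $\mathsf{C}$ so that the bounded functions on $\Sigma\setminus\mathsf{C}$ fail to separate some pair of points. That is still consistent with admitting an immersion, since an immersion need only be injective locally. A concrete route is to take a finite unbranched covering $\widetilde{Y}\to Y:=\Sigma\setminus\mathsf{C}$, where $Y\in\mathcal{B}$. A finite covering is a proper holomorphic immersion, so $\widetilde{Y}\in\mathcal{B}$ by Theorem~\ref{thm:generating-mechanism}. One then needs $\widetilde{Y}$ to be constructed, for instance as a Shcherbina--Zhang-type manifold, so that bounded functions on it do not separate the fibres of the covering. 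Controlling this non-separation is where I expect the real difficulty. I would first try to adapt the construction of \cite{ShcherbinaZhang2021KobayashiBergmanComplete} to produce such coverings, and then take products to reach every dimension.
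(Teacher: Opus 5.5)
Your part (1) is the paper's argument. You put each $\Sigma_j\setminus\mathsf{C}_j$ in $\mathcal{B}$ by a Calabi--Yau theorem with Cantor ends, then apply the product clause of Theorem~\ref{thm:generating-mechanism}. The input you need, and the one the paper cites, is Forstneri\v{c}'s theorem \cite{Forstneric2023CalabiYauCantorEnds}: every compact Riemann surface contains a Cantor set whose complement admits a complete holomorphic immersion into $\mathbf{C}^2$ with bounded image. You do not need the complete proper embedding into $\mathbf{B}^2$ you attribute, and you should not assert it without a precise reference.

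Part (2) has a genuine gap: you never produce the manifold. You correctly discard the removability idea, since a complete bounded immersion is itself a nonconstant bounded holomorphic map. You then reach the paper's skeleton: a finite unbranched cover of a curve in $\mathcal{B}$ lies in $\mathcal{B}$ by the proper-immersion clause, and non-separation of two points by bounded functions passes to products via slices. But the whole content of (2) is exhibiting one cover on which bounded holomorphic functions fail to separate the sheets, and that is exactly what you leave open. Your proposed base is also ill-suited. The Cantor set $\mathsf{C}$ comes from a pure existence theorem, so you have no control over the geometry of the ends with which to force non-separation. The Shcherbina--Zhang examples are unbounded strictly pseudoconvex domains in $\mathbf{C}^n$, not coverings of curves, so adapting them gives no reason for anything to lie in $\mathcal{B}$.

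The paper's missing ingredient is the Alarc\'on--Forstneri\v{c} theorem \cite{AlarconForstneric2021CalabiYauCountablyManyEnds}. For $\Omega=\mathbf{D}\setminus\bigcup_\nu E_\nu$, with any countable family of pairwise disjoint smoothly bounded closed discs $E_\nu$, it gives a complete holomorphic immersion $\varphi:\Omega\to\mathbf{C}^2$ with bounded image. For an unbranched double cover $\pi:X\to\Omega$, the map $\varphi\circ\pi$ puts $X$ in $\mathcal{B}$, and $X^k$ handles every dimension.

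The freedom to place the holes matters, because non-separation depends on the cover. If the monodromy is $-1$ around $E_1,E_2$ and trivial around the other holes, then $w=\sqrt{(z-a_1)(z-a_2)}$ with $a_i\in E_i$ is a bounded holomorphic function on $X$ separating the sheets. The paper simply asserts that its double cover is not Carath\'eodory hyperbolic. A workable way to supply this is Myrberg's phenomenon:
\begin{itemize}
\item Take the $E_\nu$ to be very small discs about a non-Blaschke sequence $a_\nu$, with monodromy $-1$ around each hole.
\item For $F\in H^\infty(X)$ with deck involution $\tau$, the function $G=(F-F\circ\tau)^2$ descends to a bounded function on $\Omega$. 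Its zeros have even order, and it has odd winding number around every hole.
\item Circle means of $\log|G|$ are convex in $\log|z-a_\nu|$ with slope equal to that odd winding number. So on an intermediate circle around $E_\nu$ the mean is at most $\log\sup|G|-\tfrac12\log(\rho_\nu/\epsilon_\nu)$.
\item A harmonic-measure estimate, combined with $\sum_\nu(1-|a_\nu|)=\infty$, then forces $G\equiv 0$ once the holes are small enough.
\end{itemize}
This construction and argument are what your proposal still needs.
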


\begin{proof}
\begin{enumerate}

\item Let $\Sigma$ be a compact Riemann surface. Forstneri\v{c} \cite{Forstneric2023CalabiYauCantorEnds} showed that there is a Cantor set $\mathsf{C}$ such that $\Sigma \setminus \mathsf{C}$ admits a complete holomorphic immersion $\Sigma \setminus \mathsf{C} \to \mathbf{C}^2$ with bounded image. Hence, $\Sigma \setminus \mathsf{C} \in \mathcal{B}$, and the result follows from Theorem~\ref{thm:generating-mechanism}.

\item Let $\Omega : = \mathbf{D} \setminus \bigcup_{\nu \geq 1} E_{\nu}$ where $\{E_{\nu}\}$ is a countable collection
of pairwise disjoint, smoothly bounded closed discs, diffeomorphic images of $\mathbf{D}$. Let $\pi : X \to \Omega$ be the unbranched double covering. Then $X$ is not Carath\'eodory hyperbolic. Alarc\'on--Forstneri\v{c} \cite[Theorem 1.8]{AlarconForstneric2021CalabiYauCountablyManyEnds} showed that there is a continuous map $\overline{\Omega} \to \mathbf{C}^2$ that restricts to a complete holomorphic immersion $\varphi: \Omega \to \mathbf{C}^2$ with bounded image. The composition $\varphi \circ \pi$ yields a complete holomorphic immersion with bounded image, hence $X \in \mathcal{B}$. Theorem~\ref{thm:generating-mechanism} then shows that $X$ (and finite products of $X$) admit complete K\"ahler metrics with $\text{Bisec} \leq -1$.
\end{enumerate}
\end{proof}

\section{Hermitian metrics of negative holomorphic sectional curvature}\label{sec:4}

\noindent The purpose of this section is to prove the following theorem, from which Theorem~\ref{thm:Lefschetz} follows.

\begin{theorem}\label{thm:3.1}
For every projective curve $\Gamma$ of genus $\gamma \geq 2$, and every $n \geq \gamma+1$, there is a projective surface $X_n \to \Gamma$ admitting a Hermitian metric $\omega$ with $\text{HSC}(\omega)<0$ such that \begin{eqnarray*}
    \frac{c_1^2(X_n)}{c_2(X_n)} &=& \frac{2(n+2\gamma-2)}{7n + 2\gamma-2}.
\end{eqnarray*} If $n > 10(\gamma-1)$, the K\"ahler--Einstein metric on $X_n$ does not have negative holomorphic sectional curvature. 
\end{theorem}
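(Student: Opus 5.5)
The last assertion of Theorem~\ref{thm:3.1} is a direct arithmetic check, so I would dispose of it first and spend the real effort on the construction. Cheung's theorem \cite{Che92}, quoted in the introduction, says that if $\omega_{\text{KE}}$ has $\text{HSC}<0$ then $c_1^2/c_2\geq \tfrac13$. Since
\[
\frac{2(n+2\gamma-2)}{7n+2\gamma-2} < \frac13 \iff 6n+12\gamma-12 < 7n+2\gamma-2 \iff n>10(\gamma-1),
\]
the Kähler--Einstein metric of $X_n$ cannot have negative holomorphic sectional curvature once $n>10(\gamma-1)$. Theorem~\ref{thm:Lefschetz} then reduces to a number-theoretic step. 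The slope map $n\mapsto \frac{2(n+2\gamma-2)}{7n+2\gamma-2}$ decreases towards $2/7$. By varying $\gamma$, $n$ and the base curve, and by passing to unramified covers if needed, one should hit every rational in $(\tfrac27,\tfrac23)$.

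For the construction, I would build $X_n\to\Gamma$ as a fibration whose topology is governed by $n$ singular fibres, in Lefschetz-type fashion. The slope tends to $2/7$, so $c_2$ grows like $7n$ and $c_1^2$ like $2n$. This suggests each unit of $n$ contributes a fixed amount of nodes, or of branch data, in a branched-cover model. My first attempt would be a cyclic (double) cover of a ruled surface or of $\Gamma\times\mathbf P^1$ over $\Gamma$, branched along a smooth curve $B_n$ whose class depends linearly on $n$. I would then:
\begin{enumerate}
\item compute $K_{X_n}$ from the covering formula;
\item compute $c_1^2$ from the self-intersection of $K_{X_n}$, and $c_2$ from $e(X_n)=\deg\cdot e(\text{base})-e(B_n)$ together with adjunction on $B_n$;
\item cross-check with Noether's formula $c_1^2+c_2=12\chi$;
\item tune the bidegree of $B_n$ until the ratio is exactly $\frac{2(n+2\gamma-2)}{7n+2\gamma-2}$.
\end{enumerate}
The hypothesis $n\geq\gamma+1$ should be exactly what makes the relevant linear system on $\Gamma$ base-point free or very ample, by Riemann--Roch, since degree $\geq 2\gamma-1$ or similar is needed. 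That gives a smooth $B_n$ in general position, so that the singular fibres of $X_n\to\Gamma$ are mild (nodal).

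The metric is the main obstacle. I would define $\omega$ piecewise. Away from a small neighbourhood of the singular fibres, the fibration is a smooth family of curves of genus $\geq 2$. There I would take $\omega=\pi^{\ast}\omega_\Gamma+\varepsilon\,\omega_{\text{fib}}$, with $\omega_\Gamma$ hyperbolic on $\Gamma$ and $\omega_{\text{fib}}$ a smooth extension of the fibrewise hyperbolic metrics. For $\varepsilon$ small, HSC is negative in fibre directions by Gauss--Codazzi-type decreasing of curvature on subbundles. It is negative in base directions provided $\varepsilon$ is chosen relative to the base curvature, as in the arguments for Kodaira fibrations. Near each node $zw=t$, I would use a local model metric with $\text{HSC}<0$, for example one pulled back from a bounded-domain or Bergman-type model via a local immersion. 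Then I would glue it to the global metric by a conformal factor $e^{\phi}\omega$, with $\phi$ a cutoff-type function chosen so that $\sqrt{-1}\partial\bar\partial\phi$ adds negative curvature on the gluing annulus. The Chern curvature changes as $\text{R}\mapsto \text{R}-\sqrt{-1}\partial\bar\partial\phi\otimes \g$, so I would need $\phi$ plurisubharmonic where the two metrics are interpolated, and the errors from the cutoff dominated.

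This gluing is non-Kähler, which is consistent with the remark after Theorem~\ref{thm:Lefschetz}. Verifying that a single choice of $\phi$ controls the curvature uniformly across all $n$ singular fibres is the step I expect to be hardest. I would first try making $\phi$ a function of $|zw|$ alone.
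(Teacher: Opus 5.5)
Your reduction of the K\"ahler--Einstein clause to Cheung's bound is correct, since $\frac{2(n+2\gamma-2)}{7n+2\gamma-2}<\frac13$ iff $n>10(\gamma-1)$. Your overall architecture is also the paper's: a double cover of $\mathbf{P}^1\times\Gamma$, a nodal fibration, a fibration-type metric away from the nodes, a local model at each node, and conformal gluing. But the branch class is left open, and the metric construction has two genuine gaps.

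The class is in fact forced: $\mathcal{D}\in|\mathcal{L}^{\otimes 2}|$ with $\mathcal{L}=\mathcal{O}_Y(4\mathcal{S})\otimes\pi^{\ast}\mathcal{A}$ and $\deg\mathcal{A}=n$. This gives genus-$3$ hyperelliptic fibres, $c_1^2=8(n+2\gamma-2)$, $c_2=28n+8\gamma-8$, and $28n$ nodes (not $n$). It must be paired with a jet-separation/Bertini argument, using $n\geq\gamma+1$, that gives simple ramification with at most one ramification point per fibre.

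The first gap is the region near the singular fibres. Fibrewise hyperbolic metrics degenerate as the fibres pinch (long collars, cusps in the limit), so they do not give a smooth vertical metric with controlled curvature across a nodal fibre. That is why you excise whole neighbourhoods $f^{-1}(\Delta_j)$, with $\Delta_j\subset\Gamma$ small discs. But your model metric lives only on a ball around the node, so $f^{-1}(\Delta_j)$ minus that ball receives no metric at all. The missing idea is that $K_{X_n/\Gamma}$ is relatively ample. It has degree $4$ on smooth fibres, and it pulls back to $K_{\widetilde X_t}(a+b)$, again of degree $4$, on the normalization of a nodal fibre. Hence there is a smooth metric on $\mathcal V=\ker\mathrm{d}f$ with negative curvature on the whole submersion locus, and only small balls $\Omega_i'$ around the nodes need removing.

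Your justification in fibre directions is also backwards. Curvature decreases in subbundles, so $\mathrm{HSC}_{X_t}(v)\leq\mathrm{HSC}_X(v)$, which gives no upper bound on the ambient curvature. What works is this: the ambient vertical curvature differs from the fibre curvature by a nonnegative second-fundamental-form term. That term is weighted by the inverse of the large horizontal coefficient, so it is negligible as $\varepsilon\to0$; this is the paper's $(G_b)^{2\bar 2}=O(b^{-1})$ with $b=1/\varepsilon$. Mixed directions then need a Young-inequality absorption.

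The second gap is that the gluing would fail with the $\phi$ you suggest. If $\phi$ depends only on $|zw|=|f|$, then $\sqrt{-1}\partial\bar\partial\phi$ is pulled back from the base and vanishes on $\ker\mathrm{d}f$. So it contributes nothing in exactly the fibre directions where cut-off interpolation of two metrics can create positive curvature. Being constant along $\{zw=0\}$, it also cannot act as a cut-off localized near the node. You need a factor whose Levi form is strictly positive in every direction on the interpolation region, tapered only where the metric already equals one of the two negatively curved metrics.

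The paper uses $\varphi=Q\rho\,\chi(\log\rho)$ with $\rho=|u|^2+|v|^2$. On $\operatorname{supp}(\mathrm{d}\beta)$ one has $\chi\equiv1$, so $\sqrt{-1}\partial\bar\partial\varphi=Q\,\delta\geq(Q/\Lambda)\,\widetilde{\mathrm g}$ and $\mathrm{HSC}\leq e^{-\varphi}(M-Q/\Lambda)<0$. The cut-off $\chi$ varies over a collar of logarithmic width $\ell$ on which $\beta$ is constant. There the non-plurisubharmonic part of $\partial\bar\partial\varphi$ is at most $Q\,C(\ell^{-1}+\ell^{-2})\,\delta$, which the negativity of the unmodified metric absorbs for $\ell$ large. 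Uniformity over the nodes, which you flag as the main difficulty, is a non-issue: the collars are disjoint and each gluing is local.
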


\emph{Remark.} Set $r : = 2(\gamma-1)/n$. Then $\frac{2(n+2\gamma-2)}{7n + 2\gamma-2} = \frac{2(1+r)}{7+r} =: F(r)$. The function $F(r)$ is increasing and maps $(0,2)$ onto $\left( \frac{2}{7}, \frac{2}{3} \right)$. Hence, any rational number $s \in \mathbf{Q} \cap \left( \frac{2}{7}, \frac{2}{3} \right)$ can be realized as the Chern slope of $X_n$, for some $n \geq \gamma+1 \geq 3$. In particular, Theorem~\ref{thm:3.1} provides the first examples of Hermitian surfaces with negative holomorphic sectional curvature with Chern slope $c_1^2/c_2 <1$. These examples cannot admit a metric with nonpositive bisectional curvature, addressing a problem raised by Diverio \cite{Diverio2021Kobayashi}.

Let $\Gamma$ be a smooth projective curve of genus $\gamma:=g(\Gamma) \geq 2$. Let $Y : = \mathbf{P}^1 \times \Gamma$ and denote by $p : Y \to \mathbf{P}^1$ and $\pi : Y \to \Gamma$ the projections onto each factor. For $x\in\mathbf P^1$ and $t\in\Gamma$, set $\mathcal S  = [p^{-1}(x)]$ and $\mathcal{F} = [\pi^{-1}(t)]$. Since $\mathcal{S}$ and $\mathcal{F}$ are fibers of the two projections, $\mathcal{S}^2 = \mathcal{F}^2 = 0$ and $\mathcal{S} \cdot \mathcal{F} =1$. Let $\mathcal{A} \to \Gamma$ be a line bundle of degree $n$, and set $\mathcal{L} :=\mathcal{O}_Y(4\mathcal{S}) \otimes \pi^{\ast} \mathcal{A}$. The fiber spaces $f : X_n \to \Gamma$ will be constructed from a particular branched double covering $\epsi : X_n \to Y$.

\begin{lemma}\label{Lem:Lef1}
For $n \geq \gamma+1$, there is a smooth divisor $\mathcal{D} \in | \mathcal{L}^{\otimes 2}|$ such that $\pi \vert_{\mathcal{D}} : \mathcal{D} \to \Gamma$ is finite of degree $8$, simply ramified, and no fiber of $\pi$ contains more than one ramification point.
\end{lemma}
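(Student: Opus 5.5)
The plan is to realize $\mathcal{D}$ as a general member of $|\mathcal{L}^{\otimes 2}|$ and to obtain each required property by a Bertini-type dimension count. We have $\mathcal{L}^{\otimes 2} = \mathcal{O}_Y(8\mathcal{S})\otimes \pi^\ast \mathcal{A}^{\otimes 2}$, so
$$H^0(Y,\mathcal{L}^{\otimes 2}) \;=\; H^0(\mathbf{P}^1,\mathcal{O}(8))\otimes H^0(\Gamma,\mathcal{A}^{\otimes 2}).$$
Since $\deg \mathcal{A}^{\otimes 2} = 2n \geq 2\gamma+2$, the bundle $\mathcal{A}^{\otimes 2}$ is very ample on $\Gamma$. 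More precisely, it separates $2$-jets, because $2n \geq 2\gamma+2$ gives $H^1(\Gamma,\mathcal{A}^{\otimes 2}(-3t))=0$ for every $t$. Also $\mathcal{O}(8)$ on $\mathbf{P}^1$ separates jets of order $\leq 8$. Hence the box product separates $2$-jets at every point of $Y$. It follows that for any point $y\in Y$, the evaluation map from $H^0(\mathcal{L}^{\otimes 2})$ to the $k$-jets at $y$ is surjective for $k\leq 2$. It is also surjective onto the $8$-jets in the $\mathbf{P}^1$-direction, and onto the jets along pairs of points of a fiber.

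We view a section $s$ as a family $t\mapsto s_t$ of binary octics, so that $\mathcal{D}\cap \pi^{-1}(t)=V(s_t)$. The degree statement is immediate: $\mathcal{D}\cdot\mathcal{F} = 8\,\mathcal{S}\cdot\mathcal{F} = 8$. Finiteness of $\pi|_{\mathcal{D}}$ amounts to $\mathcal{D}$ containing no fiber of $\pi$, i.e.\ $s_t\not\equiv 0$ for all $t$. For fixed $t$, this fails on a subspace of codimension $9$. Taking the union over the one-dimensional curve $\Gamma$, the failure locus has codimension $\geq 8$ in $|\mathcal{L}^{\otimes 2}|$. Smoothness of a general $\mathcal{D}$ follows from Bertini, since the system is very ample.

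For the ramification conditions we use an incidence count. A ramification point of $\pi|_{\mathcal{D}}$ is a point $(x,t)$ where $s_t$ has a multiple root at $x$. Simple ramification at such a point (given that $\mathcal{D}$ is smooth there) means two things: $s_t$ has exactly a double root at $x$, and $\partial_t s(x,t)\neq 0$. The second condition is automatic once $\mathcal{D}$ is smooth, since $\partial_x s=0$ there. We must therefore exclude two bad configurations.

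First, a triple root $x$ of some $s_t$. For fixed $(x,t)$ the conditions $s=\partial_x s=\partial_x^2 s=0$ are $3$ independent linear conditions by jet surjectivity. The parameter space $Y$ of pairs $(x,t)$ is $2$-dimensional, so the bad locus has codimension $\geq 1$.

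Second, two distinct double roots $x\neq x'$ of the same $s_t$. Here the $4$ conditions $s=\partial_x s=0$ at both $(x,t)$ and $(x',t)$ are independent, because $\mathcal{O}(8)$ separates $2+2\leq 9$ conditions on $\mathbf{P}^1$. The triples $(x,x',t)$ form a $3$-dimensional family, so again the bad locus has codimension $\geq 1$.

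Removing these proper closed subsets from $|\mathcal{L}^{\otimes 2}|$, together with the non-smooth and non-finite loci, leaves a nonempty open set, and any $\mathcal{D}$ in it has all the required properties.

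The main obstacle is verifying the independence of the linear conditions. This reduces to a tensor-product argument: conditions imposed at one $t$ factor through evaluation $H^0(\mathcal{A}^{\otimes 2})\to \mathcal{A}^{\otimes 2}|_t$ (plus first derivative in $t$ only for the smoothness step), and the $x$-conditions are independent on $H^0(\mathcal{O}(8))$. I would check this carefully, since it is precisely where $n\geq\gamma+1$ is used, through $2n\geq 2\gamma+1$ and the resulting jet separation on $\Gamma$.
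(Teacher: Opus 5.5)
Your proposal is correct and follows essentially the same route as the paper: take a general member of $|\mathcal{L}^{\otimes 2}|$, and show via Künneth jet separation that each bad locus is proper — non-finite members, singular members, triple vertical roots (three conditions over the surface $Y$), and two double roots in one fiber (four conditions over a threefold). The only difference is that you get smoothness directly from Bertini, whereas the paper counts three conditions at a singular point using $1$-jet separation of $\mathcal{A}^{\otimes 2}$; in your version the ramification counts only need $\mathcal{A}^{\otimes 2}$ to be base-point free, so your closing remark about where $n\geq\gamma+1$ is used overstates its role.
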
 \begin{proof}
The proof is a standard Bertini argument, but we explicitly record the required jet separation. The hypothesis $n \geq \gamma+1$ implies, by Serre duality, that $\mathcal{M} : = \mathcal{A}^{\otimes 2}$ separates $1$-jets on $\Gamma$. Since $\mathcal{O}_{\mathbf{P}^1}(8)$ separates all jets needed below, the K\"unneth formula (see, e.g., \cite[Proposition 9.2.4]{KempfAV}) gives the following three consequences: $| \mathcal{L}^{\otimes 2}|$ separates $1$-jets on $Y$, vertical second jets at a point, and simultaneous vertical jets at two distinct points in a fiber. 

The singular members form a proper closed subset, since a singularity at a prescribed point imposes three independent linear conditions, while the point is parameterized in a surface. Non-simple ramification is excluded in the same way. At a smooth ramification point, non-simple ramification is the single additional condition that the second vertical derivative vanishes, and the required three conditions are independent by vertical $2$-jet separation. Finally, two ramification points in one fiber impose four independent linear conditions, while the choice of two points in a fiber and the fiber itself is only three-dimensional.

Removing also the proper closed subset of sections whose divisor contains a fiber, we may choose a section outside all these loci. Its zero divisor is smooth, maps finitely to $\Gamma$, has degree $8$, has only simple ramification, and no fiber contains more than one ramification point.
\end{proof}

\begin{lemma}\label{lem:Lef2}
Let $\mathcal{D}\in |\mathcal{L}^{\otimes 2}|$ be the divisor from Lemma~\ref{Lem:Lef1}, and let $\epsi: X_n \to Y$ be the double cover branched along $\mathcal{D}$. Then $X_n$ is smooth, and $ f:=\pi\circ \epsi: X_n \to \Gamma$ is a fiber space with only nodal singularities. Its smooth fibers are hyperelliptic curves of genus $3$, and each singular fiber has exactly one ordinary node. In particular, $X_n$ is Kobayashi hyperbolic.
\end{lemma}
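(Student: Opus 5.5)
The plan is to analyze the double cover $\epsi : X_n \to Y$ locally, fiber by fiber over $\Gamma$, and then read off hyperbolicity from the fibration structure.

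\emph{Smoothness and the fibers.} Since $\mathcal{D}\in|\mathcal{L}^{\otimes 2}|$ is smooth by Lemma~\ref{Lem:Lef1}, the double cover branched along $\mathcal{D}$ is well defined: take the $\mathcal{L}$-square root in the total space of $\mathcal{L}$. It is smooth, because locally $X_n=\{w^2=h(u,v)\}$ with $\{h=0\}$ smooth, and the Jacobian criterion applies.

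For $t\in\Gamma$, the fiber $f^{-1}(t)$ is the double cover of $\mathbf{P}^1=\pi^{-1}(t)$ branched over the divisor $\mathcal{D}\cap\pi^{-1}(t)$, which has degree $\mathcal{L}^{\otimes 2}\cdot \mathcal{F} = 8\mathcal{S}\cdot\mathcal{F}=8$.

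If $t$ is not a branch value of $\pi|_{\mathcal{D}}$, these are $8$ distinct points. Riemann--Hurwitz then gives $2g-2 = 2(-2)+8$, so $g=3$, and the fiber is hyperelliptic.

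If $t$ is a branch value, Lemma~\ref{Lem:Lef1} gives exactly one ramification point, and it is simple. So the restriction of $\mathcal{D}$ to the fiber has one double point and $6$ simple points. In coordinates $(u,v)$, with $v$ a coordinate on $\Gamma$, simple ramification with $\mathcal{D}$ smooth means $\mathcal{D}=\{v = u^2\cdot(\text{unit})\}$ after a change of coordinates. Then $X_n=\{w^2=v-u^2\}$ locally, and the fiber $\{v=0\}$ is $w^2=-u^2$, which is an ordinary node. Away from this point the fiber is smooth.

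Connectedness of the fibers holds because each fiber is a double cover of $\mathbf{P}^1$ with nonempty branch locus. Hence $f$ is a fiber space with exactly one node in each singular fiber.

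\emph{Kobayashi hyperbolicity.} Let $\phi:\mathbf{C}\to X_n$ be holomorphic. The composition $f\circ\phi:\mathbf{C}\to\Gamma$ is constant because $\gamma\ge2$, so $\phi$ maps into a single fiber. A smooth fiber has genus $3$, so $\phi$ is constant there.

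For a nodal fiber, normalize. The fiber is a double cover of $\mathbf{P}^1$ branched at $8$ points, one of them doubled. Its normalization is the double cover branched at the $6$ simple points, which has genus $2$ by Riemann--Hurwitz. The node has geometric genus drop $1$, consistent with arithmetic genus $3$. The map $\phi$ lifts to the normalization, since $\phi^{-1}(\text{node})$ is discrete and the lift extends across it. A genus-$2$ curve admits no nonconstant entire curves, so $\phi$ is constant.

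Hence $X_n$ is Brody hyperbolic, and since it is compact, it is Kobayashi hyperbolic by Brody's theorem.

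\emph{Main obstacle.} The most delicate point is the local normal form at a simple ramification point. One must use that $\mathcal{D}$ is smooth there and tangent to the fiber to exactly second order to obtain $v-u^2$. This is exactly what the ``no non-simple ramification'' condition in Lemma~\ref{Lem:Lef1} guarantees. The condition that no fiber contains two ramification points is what ensures each singular fiber has only one node.
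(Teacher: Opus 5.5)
Your proof is correct and follows the paper's argument essentially step by step: smoothness from the local form $w^2=h$, Riemann--Hurwitz for the genus-$3$ fibers, the normal form $\mathcal{D}=\{v=u^2\}$ at a simple ramification point giving an ordinary node (the paper writes this node as $t=uv$), and genus $2$ for the normalizations of the nodal fibers. The only difference is the last step: you check Brody hyperbolicity directly by lifting entire curves to the normalizations and then apply Brody's theorem, whereas the paper cites Kobayashi's criterion for fiber spaces with hyperbolic base and fibers; both rely on the same genus facts.
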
 \begin{proof}
Since $\mathcal{D}$ is smooth, the double cover $X_n \to Y$ branched along $\mathcal{D}$ is smooth; locally it is given by $z^2=s$, where $\mathcal{D} = \{ s = 0 \}$. Write $f : = \pi \circ \epsi$. Let $t \in \Gamma$. Since $\mathcal{L}^{\otimes 2} \vert_{\pi^{-1}(t)} \simeq \mathcal{O}_{\mathbf{P}^1}(8)$, the divisor $\mathcal{D} \cap \pi^{-1}(t)$ has degree $8$ on the fiber $\pi^{-1}(t) \simeq \mathbf{P}^1$. If $t$ is not a critical value of $\pi \vert_{\mathcal{D}}$, then $\mathcal{D} \cap \pi^{-1}(t)$ consists of $8$ distinct points. Hence, $X_t : = f^{-1}(t) \to \pi^{-1}(t) \simeq \mathbf{P}^1$ is a double cover branched over $8$ distinct points. By Riemann--Hurwitz, $g(X_t)=3$. Thus the smooth fibers are hyperelliptic curves of genus $3$. Now suppose $t_0$ is a critical value of $\pi \vert_{\mathcal{D}}$. Since the ramification is simple, there are local coordinates $(x,t)$ on $Y$, centered at the ramification point and with $\pi(x,t) = t$, such that $\mathcal{D} = \{ t = x^2 \}$. The double cover is then locally $z^2=t-x^2$. Setting $u = x + \sqrt{-1}z$ and $v = x -\sqrt{-1}z$, this becomes $t =uv$. Thus $f$ has an ordinary nodal singularity over $t_0$. By the previous lemma, no fiber of $\pi$ contains more than one ramification point of $\pi \vert_{\mathcal{D}}$. Hence, each singular fiber of $f$ has exactly one ordinary node. The normalization of such a singular fiber is the double cover of $\mathbf{P}^1$ obtained by separating the two coincident branch points. It is therefore branched over the remaining $6$ distinct points, and by Riemann--Hurwitz, has genus $2$. The base $\Gamma$ has genus at least $2$, the smooth fibers have genus $3$, and the normalizations of the singular fibers have genus $2$. Therefore, $X_n$ is Kobayashi hyperbolic (see, e.g., \cite[Corollary 3.11.2]{KobayashiHyperbolicComplexSpaces}).
\end{proof}

\noindent Let \(f:X_n\to \Gamma\) be the fiber space constructed in
Lemma~\ref{Lem:Lef1} and Lemma~\ref{lem:Lef2}. Denote the critical points of
\(f\) by \(q_1,\ldots,q_m\). For each \(i\), choose Lefschetz coordinates
\((u_i,v_i)\) on a coordinate neighborhood \(\Omega_i\) of \(q_i\), centered at
\(q_i\), such that
\[
        f(u_i,v_i)\ = \ u_iv_i .
\]
After shrinking the \(\Omega_i\), we may assume that they are pairwise disjoint and that \(|u_i|,|v_i|<1\) on \(\Omega_i\). Set
\[
        \rho_i:=|u_i|^2+|v_i|^2,\qquad
        \delta_i:=| \text{d} u_i|^2+| \text{d} v_i|^2,
\]
and define the local nodal metric
\[
        \g_{N,i} \ := \ \frac{| \text{d} u_i|^2}{(1-|u_i|^2)^2} + \frac{| \text{d} v_i|^2}{(1-|v_i|^2)^2}.
\]
Choose \(0<r_i\ll 1\) and set
\[
        \Omega_i'\ := \ \{\rho_i<r_i\} \Subset \Omega_i,\qquad
        K \ := \ X_n\setminus \bigcup_i\Omega_i' .
\]
After shrinking the \(\Omega_i'\), there is an open neighborhood \(U\supset K\) on which \(f\) has no critical points. Thus \(f|_U:U\to \Gamma\) is a holomorphic submersion. All estimates below are made on the compact set \(K\) using finitely many submersion charts for \(f|_U\).

Let \(\g_\Gamma\) be the hyperbolic metric on \(\Gamma\).

\begin{lemma}\label{lem:Cheung-compact}
For all sufficiently large \(b>0\), there exists a smooth Hermitian metric \(\g_{\mathrm{Ch},b}^{0}\) on \(K\) such that $\text{HSC}(\g_{\mathrm{Ch},b}^{0})<0$. 
\end{lemma}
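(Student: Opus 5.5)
Following Cheung's construction for smooth fibrations, I would take the metric
\[
\g^{0}_{\mathrm{Ch},b} \ := \ \g_V + b\, f^{\ast}\g_\Gamma \qquad \text{on } U \supset K,
\]
where $\g_V$ is a smooth Hermitian metric on $U$ whose restriction to each fiber $X_t\cap U$ has negative curvature. Since $f|_U$ is a submersion, $\mathcal{T}_U$ splits smoothly as $V\oplus H$, with $V=\ker \text{d}f$ and $H$ a smooth complement, for instance the $\g_V$-orthogonal complement. The fibers $X_t$ of genus $3$ are smooth on $f(U)$ minus the critical values. The pieces $X_t\cap U$ for $t$ near a critical value are open Riemann surfaces, namely the smooth parts of fibers with small disks around the node removed. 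I would construct $\g_V$ by choosing, fiberwise and smoothly in $t$, a metric of negative Gaussian curvature on $X_t\cap U$, and then extending it to a global Hermitian metric on $U$. For smooth compact fibers, the hyperbolic metric varies smoothly. Near the nodal fibers I would instead take a smooth family of metrics with curvature $\leq -c<0$ on the compact bordered pieces $X_t\cap K$. This is possible because each such piece is a compact subsurface of a hyperbolic surface, namely the normalization of genus $2$ or a nearby smooth fiber; alternatively, one can restrict the hyperbolic metric of the smooth fibers, which stays bounded with bounded curvature on $K$. By compactness of $K$, one then has uniform bounds: the vertical curvature satisfies $\leq -c$, and all derivatives of $\g_V$ in submersion charts are bounded by some $C$.

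Next I would compute $\text{HSC}$ in a submersion chart $(x,t)$ with $f(x,t)=t$ at $\xi = a\,\partial_x + \beta\,\partial_t$. Expanding the Chern curvature of $\g_b = \g_V + b\,\g_\Gamma(t)|\text{d}t|^2$ gives three kinds of terms.
\begin{itemize}
\item The pure vertical term $|a|^4 R^{V}_{x\bar x x\bar x} \le -c|a|^4$ comes from the fiber curvature. The correction term $\g^{p\bar q}\partial\g\,\bar\partial\g$ only helps here, or is controlled by the $\g_V$ part.
\item The pure horizontal term is $b\,|\beta|^4$ times the curvature of $\g_\Gamma$, hence equal to $-b|\beta|^4$, plus terms of order $O(|\beta|^4)$ independent of $b$ coming from $\g_V$.
\item The mixed terms of types $|a|^2|\beta|^2$, $a^3\bar\beta$, and so on involve only derivatives of $\g_V$. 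Since $\g_\Gamma$ depends only on $t$, it contributes no mixed term to $-\partial\bar\partial \g$.
\end{itemize}

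Normalizing by $|\xi|^4_{\g_b}\sim (|a|^2 + b|\beta|^2)^2$, I would use Cauchy--Schwarz to absorb the mixed terms: for example $C|a|^3|\beta| \le \epsilon |a|^4 + C_\epsilon |a|^2|\beta|^2$ and $C|a|^2|\beta|^2\le \epsilon|a|^4+C_\epsilon|\beta|^4$. This yields
\[
R(\xi,\bar\xi,\xi,\bar\xi) \ \le \ -\tfrac{c}{2}|a|^4 - (b-C')|\beta|^4,
\]
which is negative for $b> C'$ whenever $\xi\ne0$.

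The main obstacle I expect is the contribution of the quadratic correction term $\g^{p\bar q}\partial_i\g_{k\bar q}\bar\partial_j\g_{p\bar\ell}$ in mixed and horizontal directions. Because the inverse metric scales like $b^{-1}$ in the $t$ direction, I must check that no term grows like $b$ times a vertical quantity. I would handle this by writing the correction in the adapted frame and observing that the $b$-dependent entries $\g_{t\bar t}$ differentiate only in $t$, which gives $O(b)$ contributions multiplied by $b^{-1}$ from the inverse. All remaining constants therefore depend only on $\g_V$ on the compact set $K$, which justifies the choice of $b$ large.
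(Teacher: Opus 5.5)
Your curvature analysis follows the paper's: a vertical metric plus $b f^{\ast}\g_\Gamma$, the inverse-metric asymptotics $\g_b^{1\bar1}=A^{-1}+O(b^{-1})$ and $\g_b^{1\bar2},\g_b^{2\bar2}=O(b^{-1})$, a horizontal term $b\,\mathrm{R}^{\g_\Gamma}+O(1)$, bounded mixed terms, and Young's inequality. You differ in how the vertical metric is produced, and that is where the gap is.

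The expansion needs $\g_V$ to be smooth near $K$, with derivatives up to order two bounded uniformly up to the nodal fibers. In particular $\partial_t\bar\partial_t\g_V$ must be bounded. Your fallback of restricting the hyperbolic metrics of the smooth fibers fails exactly there. As $t$ tends to a critical value $t_0$, these metrics converge to the cusp metric of $X_{t_0}\setminus\{\text{node}\}$ only up to corrections of order $(\log|t-t_0|)^{-2}$ (Wolpert's expansion). The resulting family is therefore continuous but not $\mathcal C^1$ in $t$.

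Your main option, ``a smooth family of metrics with curvature $\leq -c$'' on the pieces $X_t\cap K$, is asserted rather than constructed. That each piece lies in a hyperbolic surface gives a good metric fiber by fiber. It says nothing about smooth dependence on $t$. You also never explain why gluing local families into one metric on $U$ keeps the fiber curvature negative.

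This can be repaired along your lines, in three steps. First, let $K'\subset U$ be a compact neighborhood of $K$. Take a negatively curved metric on $X_{t_0}\cap K'$, for instance the restricted cusp metric, and extend it to a smooth metric on $\mathcal V$ near $X_{t_0}\cap K'$. Second, vertical curvature is continuous, so by properness of $f$ it stays $\leq -c$ on $f^{-1}(D)\cap K'$ for a small disc $D\ni t_0$. Do the same near smooth fibers. Third, glue by $\sum_\alpha(\chi_\alpha\circ f)\,h_\alpha$. This preserves negativity: the weights are constant on fibers, and $\log\sum_\alpha c_\alpha e^{u_\alpha}$ is strictly subharmonic whenever every $u_\alpha$ is.

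The paper avoids all of this. $K_{X_n/\Gamma}$ is relatively ample, since every fiber is irreducible and the bundle has degree $4$ on each. A positively curved metric on the ample bundle $K_{X_n/\Gamma}\otimes f^{\ast}M$, twisted by $f^{\ast}h_M^{-1}$, is then a single global smooth metric that is positive in vertical directions. Its dual is $\g^v$ on $\mathcal V\simeq K_{X_n/\Gamma}^{-1}$ over $U$.

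Separately, the quadratic term does not ``only help'' in the vertical direction; its sign is the opposite. For $\xi=\partial_x$, the Chern curvature of $\g_b$ equals the fiber curvature plus
$\g_b^{p\bar q}w_q\overline{w_p}-A^{-1}|w_x|^2\geq 0$, where $w_q:=\partial_x\g_{x\bar q}$. This term is harmless only because it is $O(b^{-1})$, which is the paper's $\mathrm{R}_{1\bar11\bar1}=\mathrm{R}^{\g^v}_{1\bar11\bar1}+O(b^{-1})$. So large $b$ is needed for the vertical term as well, not only for the horizontal one.
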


\begin{proof}
Let $\mathcal V  :=  \ker( \text{d} f|_U)\subset \mathcal{T}_U$ be the vertical tangent line bundle. On the submersion locus, $ \mathcal V^\ast\simeq K_{X_n/\Gamma}$. The relative dualizing sheaf \(K_{X_n/\Gamma}\) is relatively ample for this stable genus \(3\) fiber space. Indeed, on a smooth fiber it has degree \(4\). On the normalization of a nodal fiber it pulls back to the log-canonical bundle \(K_{\widetilde X_t}+a+b\), where \(a,b\) are the two preimages of the node, again of degree \(4\). Hence we may choose a smooth Hermitian metric \(h\) on \(K_{X_n/\Gamma}\) over a neighborhood of \(K\) whose curvature is positive in vertical directions. Equivalently, the dual metric \(\g^v:=h^{-1}\) on \(\mathcal V\) has negative vertical Chern curvature. Since \(K\) is compact, there is a constant \(\kappa>0\) such that, in every submersion chart \((z,t)\) with \(f(z,t)=t\), 
\[
\mathrm R^{\g^v}_{1\bar 1 1\bar 1} \ \leq \  -\kappa(\g^v_{1\bar 1})^2
\]
on \(K\). Choose a smooth horizontal distribution $\mathcal{T}_U =  \mathcal V\oplus \mathcal H$ over a neighborhood of \(K\), and choose any smooth Hermitian metric \(\g^h\) on \(\mathcal H\). Define a Hermitian metric \(\g_{\mathrm{ext}}\) by
\[
        \g_{\mathrm{ext}}(\xi,\xi)
        \ := \ 
        \g^v(\xi^v,\xi^v)+\g^h(\xi^h,\xi^h),
        \qquad
        \xi \ = \ \xi^v+\xi^h .
\]
For \(b>0\), set
\[
        \g_{\mathrm{Ch},b}^{0} \ := \  \g_{\mathrm{ext}}+b f^\ast\g_\Gamma .
\]

We prove that \(\text{HSC}(\g_{\mathrm{Ch},b}^{0})<0\) for \(b\gg 1\). Since \(K\) is compact, it suffices to work in finitely many submersion charts \((z,t)\), \(f(z,t)=t\). Write
\[
        \g_\Gamma=\lambda(t)|dt|^2 .
\]
In these coordinates the coefficient matrix of \(\g_{\mathrm{Ch},b}^{0}\) has the form
\[
        G_b=
        \begin{pmatrix}
        A & B\\
        \overline B & C+b\lambda
        \end{pmatrix},
\]
where \(A,B,C\) are the coefficients of \(\g_{\mathrm{ext}}\). All coefficients and their derivatives up to order two are uniformly bounded on \(K\), and \(A\) and \(\lambda\) are uniformly bounded above and below away from zero. Therefore
\[
        (G_b)^{1\bar1}=A^{-1}+O(b^{-1}),\qquad
        (G_b)^{1\bar2}=O(b^{-1}),\qquad
        (G_b)^{2\bar2}=O(b^{-1}),
\]
uniformly on \(K\). The only \(b\)-dependent coefficient of \(G_b\) is \(C+b\lambda(t)\). Since \(b\lambda(t)\) is independent of the fiber coordinate \(z\), the vertical curvature component satisfies
\[
        \mathrm R^{\g_{\mathrm{Ch},b}^{0}}_{1\bar1 1\bar1} \ = \  \mathrm R^{\g^v}_{1\bar1 1\bar1} +O(b^{-1}).
\]
After increasing \(b\), we obtain
\[
\mathrm R^{\g_{\mathrm{Ch},b}^{0}}_{1\bar1 1\bar1} \ \leq \  -c_1
\]
for some constant \(c_1>0\), uniformly in all chosen charts. For the horizontal component, the leading term is the curvature of the scaled base metric:
\[ \mathrm R^{\g_{\mathrm{Ch},b}^{0}}_{2\bar2 2\bar2} \ = \ b\,\mathrm R^{\g_\Gamma}_{t\bar t t\bar t}+O(1).
\]
Since \(\g_\Gamma\) has negative Gaussian curvature and \(\lambda\) is bounded below on \(\Gamma\), there is \(c_2>0\) such that
\[
\mathrm R^{\g_{\mathrm{Ch},b}^{0}}_{2\bar2 2\bar2} \ \leq \ -c_2 b
\]
for \(b\gg1\). All remaining curvature components are bounded independently of \(b\). Indeed, a vertical derivative annihilates the coefficient \(b\lambda(t)\); if only horizontal derivatives occur, then every resulting \(b\)-term is paired with an inverse coefficient carrying a horizontal index, which is \(O(b^{-1})\), except for the pure horizontal curvature component already treated above. Thus, for
\[
        \xi=a\frac{\partial}{\partial z}
        +
        c\frac{\partial}{\partial t},
\]
there are constants \(C,c_1,c_2>0\), independent of \(b\), such that
\[
\begin{aligned}
        \mathrm R^{\g_{\mathrm{Ch},b}^{0}} (\xi,\bar\xi,\xi,\bar\xi)
        \leq{}&
        -c_1|a|^4
        -c_2b|c|^4  \\
        &+ C\bigl( |a|^3|c| +|a|^2|c|^2 +|a||c|^3 +|c|^4 \bigr).
\end{aligned}
\]
By Young's inequality,
\[
C\bigl( |a|^3|c| +|a|^2|c|^2 +|a||c|^3 \bigr) \ \leq \ \frac{c_1}{2}|a|^4+C'|c|^4 .
\]
Hence
\[
\mathrm R^{\g_{\mathrm{Ch},b}^{0}}(\xi,\bar\xi,\xi,\bar\xi)
        \ \leq \ -\frac{c_1}{2}|a|^4 -(c_2b-C'-C)|c|^4 .
\]
Choosing \(b\gg1\) gives 
\[
\mathrm R^{\g_{\mathrm{Ch},b}^{0}}(\xi,\bar\xi,\xi,\bar\xi) \ < \ 0
\]
for every nonzero \(\xi\). Since the denominator in the holomorphic sectional curvature is positive, this proves $\text{HSC}(\g_{\mathrm{Ch},b}^{0})<0$ on $K$.
\end{proof}

Fix once and for all a value of \(b\) for which Lemma~\ref{lem:Cheung-compact} holds. On each \(\Omega_i\), set
\[
        \g_{1,i}:=\g_{N,i}+b f^\ast\g_\Gamma .
\]
The metric \(\g_{1,i}\) has negative holomorphic sectional curvature on \(\Omega_i\). Indeed, it is the pullback, under the holomorphic immersion
\[
        (u_i,v_i)\longmapsto (u_i,v_i,f(u_i,v_i)),
\]
of the product of the two Poincaré metrics in the \(u_i\)- and \(v_i\)-directions and the scaled metric \(b\g_\Gamma\) on the base. A product of finitely many curves of negative Gaussian curvature has holomorphic sectional curvature bounded above by a negative constant, and the holomorphic sectional curvature of a complex submanifold with the induced Kähler metric is bounded above by that of the ambient metric in the corresponding tangent direction.

\begin{lemma}\label{lem:collar-replacement}
After shrinking the \(\Omega_i'\), the metric
\(\g_{\mathrm{Ch},b}^{0}\) on \(K\) can be modified to a smooth Hermitian metric \(\g_K\) on \(K\) such that $\text{HSC}(\g_K)<0$ and, for each $i$, $\g_K = \g_{1,i}$ on a collar of the boundary component \(\partial\Omega_i'\subset K\).
\end{lemma}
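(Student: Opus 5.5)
The plan is to interpolate, on a thin annular collar around each $\partial\Omega_i'$, between $\g_{\mathrm{Ch},b}^{0}$ and $\g_{1,i}$. Both metrics contain the same large horizontal summand $b f^\ast\g_\Gamma$, and both have negative holomorphic sectional curvature there; the interpolation will be done while keeping that shared summand fixed.

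On the collar $\{r_i/4<\rho_i<r_i\}\subset\Omega_i\cap U$ we have $|u_i|,|v_i|<1$, so $\g_{N,i}$ is uniformly equivalent to $\delta_i$. Write both metrics as
\[
\g_{\mathrm{Ch},b}^{0}=\g_{\mathrm{ext}}+bf^\ast\g_\Gamma,\qquad \g_{1,i}=\g_{N,i}+bf^\ast\g_\Gamma,
\]
and set $\g_K:=\chi\,\g_{\mathrm{ext}}+(1-\chi)\,\g_{N,i}+bf^\ast\g_\Gamma$. Here $\chi=\chi(\rho_i/r_i)$ is a cutoff equal to $0$ near $\{\rho_i=r_i\}$ and equal to $1$ near $\{\rho_i=r_i/4\}$; orient it so that $\g_K=\g_{1,i}$ on the collar adjacent to $\partial\Omega_i'$ and $\g_K=\g_{\mathrm{Ch},b}^0$ farther out. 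A convex combination of Hermitian metrics is Hermitian, so $\g_K$ is a smooth Hermitian metric. It remains to check $\text{HSC}(\g_K)<0$ on the transition region.

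For the curvature check, use submersion charts $(z,t)$ with $f=t$ on the collar, and decompose $\xi=a\,\partial_z+c\,\partial_t$ as in the proof of Lemma~\ref{lem:Cheung-compact}. Three points go into the estimate.

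\begin{enumerate}
\item The term $bf^\ast\g_\Gamma$ still depends only on $t$. So the same bookkeeping as in Lemma~\ref{lem:Cheung-compact} gives horizontal curvature $\le -c_2b|c|^4$, with all mixed terms $O(1)$.
\item The vertical component $\mathrm R_{1\bar11\bar1}(\g_K)$ equals, up to $O(b^{-1})$, the Chern curvature of the line-bundle metric $\chi\g^v+(1-\chi)\g_{N,i}|_{\mathcal V}$ on $\mathcal V$.
\item The constant $C$ in the mixed terms now depends on derivatives of $\chi$, hence on $r_i$. It does not depend on $b$.
\end{enumerate}

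Point (3) is why we may fix $r_i$ first and then, if necessary, enlarge $b$. The statement says $b$ is fixed, but Lemma~\ref{lem:Cheung-compact} holds for all large $b$, so we reselect $b$ after choosing $r_i$ and regard it as the fixed value. Young's inequality then closes the estimate exactly as before, provided the vertical curvature is uniformly negative.

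The main obstacle is point (2): the vertical curvature of the interpolated fiber metric. Interpolating two negatively curved metrics on a line bundle can create positive curvature through the $\partial\bar\partial\chi$ terms. The remedy is to interpolate the potentials rather than the metrics. Writing $\g^v=e^{-\varphi}$ and $\g_{N,i}|_{\mathcal V}=e^{-\psi}$ locally, negative curvature means $i\partial\bar\partial\varphi|_{\mathcal V}>0$ and $i\partial\bar\partial\psi|_{\mathcal V}>0$.

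First I would check whether $\g_{N,i}|_{\mathcal V}$ already has negative vertical curvature near the collar. On the fiber $uv=t$, parametrized by $u$, we get $\psi=-\log\big((1-|u|^2)^{-2}+|t|^2|u|^{-4}(1-|t/u|^2)^{-2}\big)$ up to harmonic terms. This is a log of a sum of positive terms coming from a pullback of a negatively curved product, and it is plurisubharmonic by the submanifold comparison already used for $\g_{1,i}$.

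I would then replace the linear interpolation by a regularized maximum $\widetilde{\max}(\varphi,\psi+C_0(\rho_i-r_i/2))$ on the vertical line bundle. This keeps $i\partial\bar\partial>0$, since the max of subharmonic functions stays subharmonic and the linear-in-$\rho_i$ shift is $\rho_i$-convex. I would then rescale $\g^v$ by a constant beforehand so that each branch dominates on the correct end of the collar. This gluing, and in particular the compatibility of this modified vertical metric with the horizontal splitting on the overlap, is where the most care will be required.
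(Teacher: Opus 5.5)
Your reduction is correct as far as it goes. With \(bf^\ast\g_\Gamma\) kept intact, the only dangerous term is the fiberwise curvature \(-A\,\partial_z\partial_{\bar z}\log A\) of the glued vertical density \(A\). Re-choosing \(b\) after \(r_i\) is also legitimate, since the threshold in Lemma~\ref{lem:Cheung-compact} depends on \(K\) anyway. One placement fix: \(\{r_i/4<\rho_i<r_i\}\) lies in \(\Omega_i'\), not in \(K\); the collar should be \(\{r_i\le\rho_i\le r_i'\}\), with \(\g_{1,i}\) at the inner end.

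The paper takes a different route. It interpolates the full metrics linearly, \(\widetilde{\g}_i=\beta_i\g_{1,i}+(1-\beta_i)\g^0_{\mathrm{Ch},b}\), and absorbs the error on \(\operatorname{supp}(d\beta_i)\) with a conformal weight \(e^{Q_i\rho_i\chi_i(\log\rho_i)}\). That weight is strictly plurisubharmonic there and is cut off over logarithmically long collars, so no separate vertical gluing ever appears.

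In your route the vertical gluing is the entire content, and the proposal stops exactly there. The sketch also fails as stated, in two ways.
\begin{itemize}
\item The sign is reversed. For \(\g^v=e^{-\varphi}\) the Chern curvature is \(\sqrt{-1}\partial\bar\partial\varphi\), so negative curvature means \(\varphi\) is strictly superharmonic on fibers. The quantity whose maximum stays admissible is \(\log\g^v=-\varphi\), not \(\varphi\).
\item Exactness at the ends fails. A regularized maximum reproduces a branch exactly only where that branch's shift vanishes. A fiberwise subharmonic shift that vanishes on a collar of one end of a fiber annulus cannot be negative along a whole circle near the other end, by the maximum principle. Your \(C_0(\rho_i-r_i/2)\) vanishes on no open set, so at one end you get \(e^{\sigma}\g^v\) or \(e^{\sigma}\g_{N,i}\), not the metric the lemma prescribes. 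Rescaling \(\g^v\) by a constant does not change this.
\end{itemize}

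The underlying obstruction is Gauss--Bonnet on the fibers, not the choice of gluing formula.
\begin{itemize}
\item Since \(\mathcal{T}_{X_t}\subset\mathcal{T}_{X_n}|_{X_t}\) is a holomorphic subbundle, \(\mathrm{HSC}<0\) forces the induced metric on every fiber to have negative Gaussian curvature.
\item For \(|t|\ll r_i\), the fiber \(X_t\) meets the collar \(\{r_i\le\rho_i\le r_i'\}\) in two annuli. Gauss--Bonnet fixes \(\int K\,dA\) on each annulus from the boundary metrics alone.
\item Suppose \(\g^v\) is dual to a smooth metric on \(K_{X_n/\Gamma}\) extending across the node, which is the natural output of relative ampleness. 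Its local generator restricts to \(du_i/u_i\) on fibers, so on fibers \(\g^v=e^{\phi}|du_i|^2/|u_i|^2\) is cylindrical. For small \(r_i'\) the outer circle then has total geodesic curvature close to \(0\).
\item At the inner end, \(\g_{1,i}|_{X_t}\) is nearly Euclidean. The inner circle, oriented as a boundary of the annulus, contributes at most about \(-2\pi\).
\item Hence \(\int K\,dA\gtrsim 2\pi>0\), and no metric equal to \(\g_{1,i}\) and \(\g^0_{\mathrm{Ch},b}\) at the two ends of such a collar can have \(\mathrm{HSC}<0\).
\end{itemize}
This holds whatever device is used: linear, max-type, or a conformal factor equal to \(1\) near both ends (as in the paper). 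The missing ingredient is a vertical metric on \(K\) adapted to \(\g_{N,i}\) near the nodes, with nearly Euclidean rather than cylindrical ends, or else a modification not confined to collars near the nodes. Your proposal supplies neither.
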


\begin{proof}
Fix \(i\) and choose annular collars
\[
A_i^-\Subset A_i^0\Subset A_i^+\Subset K\cap\Omega_i
\]
of \(\partial\Omega_i'\), where \(A_i^-\) is the innermost collar. By shrinking \(\Omega_i'\) inside \(\Omega_i\), we may assume that these collars have arbitrarily large width in the variable \(\log\rho_i\). Choose a cut-off function \(\beta_i\in \mathcal{C}^\infty(A_i^+,[0,1])\) such that
\[
\beta_i\equiv 1 \quad\text{on } A_i^-, \qquad \beta_i\equiv 0 \quad\text{near } A_i^+\setminus A_i^0 .
\]
Define
\[
\widetilde\g_i \ := \  \beta_i\g_{1,i} +(1-\beta_i)\g_{\mathrm{Ch},b}^{0}
\]
on \(A_i^+\). This is a smooth Hermitian metric. Since the interpolation takes place on a compact collar, the metrics \(\widetilde\g_i\) are uniformly comparable to \(\delta_i\), have uniformly bounded \( \mathcal{C}^2\)-norm, and hence have
holomorphic sectional curvature bounded above by some constant \(M_i\). Let \(S_i:=\operatorname{supp}(\text{d}\beta_i)\).

Choose a smooth one-variable cut-off function
\[
        \chi_i\in C^\infty(\mathbf R,[0,1]),
\]
and consider on the collar the function $\chi_i \circ \log\rho_i$.
We choose \(\chi_i\) so that the induced function $\chi_i \circ \log\rho_i$ satisfies
\[
        \chi_i \circ \log\rho_i \equiv 1 \quad\text{on } S_i=\operatorname{supp}(d\beta_i),
\]
whereas
\[
      \chi_i \circ \log\rho_i \equiv 0
\]
on the collars on which \(\beta_i\) is constant. The support of \(d(\chi_i \circ \log\rho_i)\) is chosen to be
disjoint from \(S_i\).

Moreover, by taking the logarithmic width of the collar sufficiently large, say
equal to \(\ell_i\), the one-variable cut-off \(\chi_i\) may be chosen so
that
\[
        \|\chi_i'\|_{L^\infty}=O(\ell_i^{-1}),
        \qquad
        \|\chi_i''\|_{L^\infty}=O(\ell_i^{-2}),
\]
where the primes denote derivatives with respect to the logarithmic
variable \(\log\rho_i\). 

Set
\[
\varphi_i \ := \ Q_i\rho_i\chi_i(\log\rho_i), \qquad \widehat\g_i \ := \ e^{\varphi_i}\widetilde\g_i .
\]
On \(S_i\), we have \(\chi_i \circ \log\rho_i \equiv1\), hence
\[
  \varphi_i=Q_i\rho_i, \ \ \  \partial\bar\partial\varphi_i \ = \ Q_i\partial\bar\partial\rho_i .
\]
Since, on \(A_i^+\), \(\widetilde\g_i\leq \Lambda_i\delta_i = \Lambda_i\,\partial\bar\partial\rho_i\), the conformal change formula for the Chern holomorphic sectional curvature gives
\[
        \text{HSC}(\widehat\g_i)(\xi)
        \ = \ 
        e^{-\varphi_i}
        \left( \text{HSC}(\widetilde{\g}_i)(\xi) -\frac{\partial\bar\partial\varphi_i(\xi,\bar\xi)}{|\xi|_{\widetilde\g_i}^2} \right) \ \leq \  e^{-\varphi_i} \left(M_i-\frac{Q_i}{\Lambda_i}\right).
\]

\vspace{2mm}
Thus \(H(\widehat{\g}_i)<0\) on \(S_i\) once \(Q_i>\Lambda_i\max\{M_i,0\}\). On the support of \(\text{d}\chi_i\), the function \(\beta_i\) is constant. Hence \(\widetilde\g_i\) is equal either to \(\g_{1,i}\) or to \(\g_{\mathrm{Ch},b}^{0}\), both of which have holomorphic sectional curvature bounded above by a negative constant. A direct computation gives
\begin{eqnarray*}
\partial\bar\partial(\rho_i\chi_i(\log\rho_i)) \ = \  (\chi_i(\log\rho_i)+\chi_i'(\log\rho_i))\partial\bar\partial\rho_i + \frac{\chi_i'(\log\rho_i)+\chi_i''(\log\rho_i)}{\rho_i} \partial\rho_i\wedge\bar\partial\rho_i.
\end{eqnarray*}
Since $|\partial\rho_i(\xi)|^2\leq \rho_i|\xi|_{\delta_i}^2$, the absolute value of the negative part of \(\partial\bar\partial(\rho_i\chi_i(\log\rho_i))\) is bounded by  $$C_i\left(\ell_i^{-1}+\ell_i^{-2}\right)\delta_i$$ for some $C_i > 0$. Taking \(\ell_i\) sufficiently large therefore preserves strict negativity of $\text{HSC}(\widehat\g_i)$ on \(\operatorname{supp}(\text{d}\chi_i)\). Thus \(\widehat\g_i\) has negative holomorphic sectional curvature on \(A_i^+\). Moreover, since \(\chi_i=0\) on the collars where \(\beta_i\) is constant, \(\widehat{\g}_i=\g_{1,i}\) on \(A_i^-\), and \(\widehat\g_i=\g_{\mathrm{Ch},b}^{0}\) near the outer boundary of \(A_i^+\). Performing this construction on the finitely many disjoint collars and leaving \(\g_{\mathrm{Ch},b}^{0}\) unchanged elsewhere gives the desired metric \(\g_K\) on \(K\).
\end{proof}

\begin{lemma}\label{lem:Xn-negative-HSC}
The surface \(X_n\) admits a smooth Hermitian metric of negative holomorphic sectional curvature.
\end{lemma}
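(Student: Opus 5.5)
The plan is to glue the metric $\g_K$ from Lemma~\ref{lem:collar-replacement} with the local model metrics $\g_{1,i}$ on the balls $\Omega_i'$. Define
\[
\g \ := \ \begin{cases} \g_K & \text{on } K,\\ \g_{1,i} & \text{on } \Omega_i',\ 1\leq i\leq m.\end{cases}
\]
Well-definedness and smoothness follow from Lemma~\ref{lem:collar-replacement}. It gives $\g_K=\g_{1,i}$ on a collar of $\partial\Omega_i'$ inside $K$. Since $\g_{1,i}=\g_{N,i}+bf^\ast\g_\Gamma$ is defined and smooth on all of $\Omega_i\supset \overline{\Omega_i'}$, the two definitions agree on the open overlap collar. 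So $\g$ is a smooth Hermitian metric on $X_n=K\cup\bigcup_i\Omega_i'$.

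For the curvature, $\text{HSC}$ is a pointwise quantity depending only on the $2$-jet of the metric. At points of $K$ (including the collars) we get $\text{HSC}(\g)=\text{HSC}(\g_K)<0$ by Lemma~\ref{lem:collar-replacement}. On $\Omega_i'$, $\text{HSC}(\g)=\text{HSC}(\g_{1,i})<0$. The justification is that $\g_{1,i}$ is the pullback of the product Kähler metric $\g_{\mathbf D}\oplus\g_{\mathbf D}\oplus b\g_\Gamma$ under the holomorphic embedding $(u_i,v_i)\mapsto(u_i,v_i,u_iv_i)$, where $u_iv_i=f$ is read in a local coordinate on $\Gamma$. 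This embedding is an embedding since its first two components already are. We then apply the Gauss equation (decreasing property of holomorphic sectional curvature on complex submanifolds) together with the negativity of $\text{HSC}$ for a product of negatively curved curves. Here I would be careful with the claim about products. For $\xi=(\xi_1,\xi_2,\xi_3)$ one has $\text{HSC}(\xi)=\sum_j K_j|\xi_j|^4/(\sum_j|\xi_j|^2)^2\leq -\min_j|K_j|/3$, with $K_j$ bounded above by negative constants. Since $\overline{\Omega_i'}$ is compact, all the relevant constants are uniform.

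Finally, $X_n$ is compact, so $\text{HSC}(\g)<0$ everywhere in fact gives a uniform negative upper bound, and $\g$ is the desired metric. The main point to double-check is that the glued pieces really match on an open set, not just along $\partial\Omega_i'$. Matching only on the boundary would not give smoothness. This is exactly what the collar statement in Lemma~\ref{lem:collar-replacement} provides, so no further interpolation should be needed. A secondary check is that the Chern-connection curvature of the non-Kähler $\g_K$ is unaffected by the gluing, which holds because the gluing is an equality of tensors on an open set.
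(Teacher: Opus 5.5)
Your proposal is correct and matches the paper's proof: you define $\g$ as $\g_K$ on $K$ and $\g_{1,i}$ on each $\Omega_i'$, get smoothness from the collar agreement in Lemma~\ref{lem:collar-replacement}, and get negativity of $\text{HSC}$ separately on each piece. The extra details you supply, namely the bound $\text{HSC}\leq -\tfrac13\min_j|K_j|$ for the product of curves and the Gauss-equation monotonicity for the Kähler metric $\g_{1,i}$, only spell out what the paper asserts just before Lemma~\ref{lem:collar-replacement}; one small wording fix is that $K\cap\Omega_i'=\emptyset$, so there is no ``overlap'', and the relevant open set is $\Omega_i'\cup(\text{collar})$, on which $\g\equiv\g_{1,i}$.
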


\begin{proof}
By construction, the Hermitian metric $g$ such that $g=g_K$ on $K$ and $g=g_{1,i}$ on $\Omega'_i$, for every $i$, satisfies $\text{HSC}(\g) < 0$ on $X_n$.
\end{proof}

\begin{proposition}\label{prop:Chern-Xn}
The Chern numbers of $X_n$ are \begin{eqnarray*}
    c_1^2(X_n) \ = \ 8(n+2\gamma-2), \qquad c_2(X_n) \ = \ 28n + 8\gamma-8.
\end{eqnarray*} For $n>10(\gamma-1)$, the K\"ahler--Einstein metric on $X_n$ does not have negative holomorphic sectional curvature. 
\end{proposition}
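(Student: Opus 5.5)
The plan is to compute the Chern numbers of $X_n$ directly from the standard double-cover formulas applied to $\epsi : X_n \to Y = \mathbf{P}^1 \times \Gamma$, branched along $\mathcal{D} \in |\mathcal{L}^{\otimes 2}|$. The Kähler--Einstein statement will then follow from Cheung's inequality \cite{Che92}, which is quoted in the introduction.

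\emph{First Chern number.} On $Y$ we have numerically $K_Y \equiv -2\mathcal{S} + (2\gamma-2)\mathcal{F}$, using $K_{\mathbf{P}^1} = \mathcal{O}(-2)$, $\deg K_\Gamma = 2\gamma - 2$ and the intersection numbers $\mathcal{S}^2=\mathcal{F}^2=0$, $\mathcal{S}\cdot\mathcal{F}=1$. Also $\mathcal{L} \equiv 4\mathcal{S} + n\mathcal{F}$. For a double cover branched along a smooth divisor in $|\mathcal{L}^{\otimes 2}|$ one has $K_{X_n} = \epsi^\ast(K_Y + \mathcal{L})$. Here
\[
K_Y+\mathcal{L} \ \equiv \ 2\mathcal{S} + (n+2\gamma-2)\mathcal{F},
\]
so
\[
c_1^2(X_n) \ = \ 2(K_Y+\mathcal{L})^2 \ = \ 2\cdot 2\cdot 2(n+2\gamma-2) \ = \ 8(n+2\gamma-2).
\]
The same computation shows that $K_Y + \mathcal{L}$ is ample on $Y$, since both coefficients are positive on a product of curves. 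Because $\epsi$ is finite, $K_{X_n}$ is ample. This gives the Aubin--Yau Kähler--Einstein metric independently of the Kobayashi hyperbolicity argument.

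\emph{Second Chern number.} Use the topological formula $e(X_n) = 2e(Y) - e(\mathcal{D})$. We have $e(Y) = 2(2-2\gamma)$. By adjunction, $e(\mathcal{D}) = -\mathcal{D}\cdot(\mathcal{D}+K_Y)$, where $\mathcal{D} \equiv 8\mathcal{S} + 2n\mathcal{F}$ and $\mathcal{D}+K_Y \equiv 6\mathcal{S} + (2n+2\gamma-2)\mathcal{F}$. This gives
\[
e(\mathcal{D}) = -\bigl(8(2n+2\gamma-2)+12n\bigr) = -(28n+16\gamma-16),
\]
and hence
\[
c_2(X_n) = 8-8\gamma + 28n + 16\gamma - 16 = 28n + 8\gamma - 8.
\]
As a cross-check, I would also compute $e(X_n) = e(\Gamma)e(X_t) + \#\{\text{nodes}\}$. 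Here $e(\Gamma)e(X_t) = (2-2\gamma)(-4)$, and the number of nodes equals the number of branch points of $\pi|_{\mathcal{D}}$. By Riemann--Hurwitz for the degree $8$ map $\mathcal{D}\to\Gamma$, that number is $28n + 16\gamma - 16 - 8(2\gamma-2) + \dots$; this count should agree with Lemma~\ref{lem:Lef2}.

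\emph{Kähler--Einstein statement.} The slope satisfies $c_1^2/c_2 < 1/3$ if and only if $24(n+2\gamma-2) < 28n + 8\gamma - 8$. This is equivalent to $40(\gamma-1) < 4n$, that is, $n > 10(\gamma-1)$. Cheung's theorem says that if $\omega_{\mathrm{KE}}$ has negative holomorphic sectional curvature then $c_1^2/c_2 \ge 1/3$. So for $n > 10(\gamma-1)$ the Kähler--Einstein metric cannot have $\mathrm{HSC}<0$.

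\emph{Expected difficulty.} There is no real obstacle. The only point needing care is bookkeeping: one must identify $\mathcal{S}$ (a horizontal section class, $\{x\}\times\Gamma$) and $\mathcal{F}$ (a $\mathbf{P}^1$ fiber) correctly when writing $K_Y$. Getting this identification right is what makes the stated numbers come out.
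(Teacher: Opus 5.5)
Your proposal is correct and follows the paper's proof: the canonical bundle formula $K_{X_n}=\epsi^{\ast}(K_Y+\mathcal{L})$ gives $c_1^2$, and $e(X_n)=2e(Y)-e(\mathcal{D})$ with adjunction gives $c_2$. Ampleness of $K_Y+\mathcal{L}$ then yields the K\"ahler--Einstein metric, and Cheung's bound finishes the argument; your intersection numbers and the threshold $n>10(\gamma-1)$ are all right. In your optional cross-check the trailing ``$+\dots$'' is unnecessary: Riemann--Hurwitz gives exactly $\deg R=28n$ nodes, so $(2-2\gamma)(-4)+28n$ matches, as the paper notes in the remark after the proof.
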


\begin{proof}
Since $\epsi:X_n\to Y$ is the double cover branched along $\mathcal{D}\in |\mathcal{L}^{\otimes 2}|$, the canonical bundle formula (see, e.g., \cite[Chapter V.22]{barth2004compact}) gives $K_{X_n} \simeq \epsi^{\ast} (K_Y \otimes \mathcal{L})$, and therefore,  \begin{eqnarray*}
    c_1^2(X_n) &=& K_{X_n}^2 \ = \ 2(K_Y+\mathcal{L})^2.
\end{eqnarray*} In the N\'eron-Severi group $\text{NS}(Y)$, we have $K_Y \equiv -2 \mathcal{S} + (2\gamma-2)\mathcal{F}$, and $\mathcal{L} \equiv 4 \mathcal{S} + n \mathcal{F}$, so  \begin{eqnarray*}
    K_Y + \mathcal{L} & \equiv &2\mathcal{S}+(n+2\gamma-2)\mathcal{F}.
\end{eqnarray*}  Using $\mathcal{S}^2=\mathcal{F}^2=0$ and $\mathcal{S} \cdot \mathcal{F}=1$, \begin{eqnarray*}
    c_1^2(X_n) &=& 2 (K_Y+\mathcal{L})^2 \ = \ 8(n+2\gamma-2).
\end{eqnarray*} To compute $c_2(X_n)$, let $e$ denote the Euler number, which transforms according to the formula \begin{eqnarray*}
c_2(X_n) \ = \ e(X_n) \ = \ 2e(Y) - e(\mathcal{D}) \ = \ 2(4-4\gamma) - e(\mathcal{D}).
\end{eqnarray*}  To compute $e(\mathcal{D})$, observe that the adjunction formula gives $2g(\mathcal{D})-2 = \mathcal{D} \cdot (\mathcal{D}+K_Y)$. Since $\mathcal{D} \equiv 8\mathcal{S}+2n\mathcal{F}$ and hence, $\mathcal{D} + K_Y \equiv 6\mathcal{S}+(2n+2\gamma-2)\mathcal{F}$, we have \begin{eqnarray*}
    \mathcal{D}\cdot(\mathcal{D}+K_Y) &=& 8(2n+2\gamma-2)+6(2n) \ = \ 28n+16\gamma-16.
\end{eqnarray*} Therefore, $e(\mathcal{D})=2-2g(\mathcal{D})=-28n-16\gamma+16$, and hence, \begin{eqnarray*}
    c_2(X_n) &=& 2(4-4\gamma)-(-28n-16\gamma+16) \ = \ 28n + 8\gamma-8.
\end{eqnarray*} Consequently, \begin{eqnarray*}
    \frac{c_1^2(X_n)}{c_2(X_n)} &=& \frac{8(n+2\gamma-2)}{28n+8\gamma-8} \ = \ \frac{2(n+2\gamma-2)}{7n+2\gamma-2}.
\end{eqnarray*} Moreover, \begin{eqnarray*}
    K_Y \otimes \mathcal{L} & \simeq & p^{\ast} \mathcal{O}_{\mathbf{P}^1}(2) \otimes \pi^{\ast}(K_{\Gamma} \otimes \mathcal{A}).
\end{eqnarray*} Since $\deg(K_{\Gamma} \otimes \mathcal{A}) = n + 2\gamma -2 \geq 2\gamma$, both factors are ample on their respective curves. Hence, $K_Y \otimes \mathcal{L}$ is ample on $Y = \mathbf{P}^1 \times \Gamma$. Since $\epsi$ is finite, $K_{X_n} \simeq \epsi^{\ast}(K_Y \otimes \mathcal{L})$ is ample. Therefore, $X_n$ admits a K\"ahler--Einstein metric with negative Ricci curvature by the Aubin--Yau theorem \cite{Aubin, Yau1976}. For $n>10(\gamma-1)$, the Chern slope of $X_n$ is less than $\frac{1}{3}$. Hence, by Cheung's theorem \cite{Che92}, the K\"ahler--Einstein metric on $X_n$ does not have negative holomorphic sectional curvature.
\end{proof}

\emph{Remark.} For fixed $\gamma \geq 2$, the Chern slope of $X_n$ tends to $\frac{2}{7}$, as $n \to \infty$. Moreover, the singular fibers of $f$ correspond exactly to the ramification points of $\pi \vert_{\mathcal{D}}$. Since the ramification is simple and no fiber contains more than one ramification point, the number of singular fibers is given by the degree of the ramification divisor of $\pi \vert_{\mathcal{D}}$. Since $\deg(\pi \vert_{\mathcal{D}})=8$, Riemann--Hurwitz gives \begin{eqnarray*}
    2g(\mathcal{D}) - 2 &=& 8 (2\gamma-2) + \deg(R).
\end{eqnarray*} On the other hand, by adjunction, $$2g(\mathcal{D})-2 \ = \  (K_Y+\mathcal{D}) \cdot \mathcal{D} \ = \  (6 \mathcal{S} + (2n+2\gamma-2)\mathcal{F}) \cdot (8 \mathcal{S} + 2n \mathcal{F}) \ = \ 28n + 16 \gamma -16.$$ Comparing with Riemann--Hurwitz gives $\deg(R) = 28n$.

Besides the K\"ahler--Einstein metric, the Bergman metric is the natural candidate for a negatively curved metric. The following result shows that the metric constructed in Theorem~\ref{thm:Lefschetz} cannot be replaced by considering the Bergman metric. 

\begin{proposition}\label{prop:Bergman-degenerates-ramification}
The canonical Bergman pseudometric on $X_n$ is globally defined, but it degenerates along the ramification divisor $R_{\epsi}\subset X_n$.
\end{proposition}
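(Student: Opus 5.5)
The plan is to describe the space of holomorphic $2$-forms $H^0(X_n,K_{X_n})$ explicitly via the double cover, and to read off the Bergman pseudometric from the local shape of these forms near the ramification divisor. Recall that the canonical Bergman pseudometric is $\sqrt{-1}\partial\bar\partial\log \mathcal{K}$, where $\mathcal{K}=\sum_j \sigma_j\wedge\bar\sigma_j$ and $\{\sigma_j\}$ is an orthonormal basis of $H^0(X_n,K_{X_n})$ for the pairing $\int\sigma\wedge\bar\tau$. It is a globally defined (possibly degenerate) semipositive form as soon as $K_{X_n}$ is base point free.

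\emph{Step 1.} By Proposition~\ref{prop:Chern-Xn}, $K_{X_n}\simeq \epsi^\ast(K_Y\otimes\mathcal L)$ with $K_Y\otimes\mathcal L\simeq p^\ast\mathcal O_{\mathbf P^1}(2)\otimes\pi^\ast(K_\Gamma\otimes\mathcal A)$ ample. Use the standard decomposition
\[
\epsi_\ast \mathcal O_{X_n}=\mathcal O_Y\oplus\mathcal L^{-1}, \qquad H^0(X_n,K_{X_n})=H^0(Y,K_Y\otimes\mathcal L)\oplus H^0(Y,K_Y),
\]
and note that $H^0(Y,K_Y)=0$ because $Y=\mathbf P^1\times\Gamma$. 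Hence every holomorphic $2$-form on $X_n$ is \emph{invariant} under the covering involution $\iota$, i.e. is of the form $\epsi^\ast(\tau)/z$ in local coordinates where $z^2=s$. In intrinsic terms, every section is pulled back from $Y$: $H^0(K_{X_n})=\epsi^\ast H^0(Y,K_Y\otimes\mathcal L)$.

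\emph{Step 2 (global definition).} The bundle $K_Y\otimes\mathcal L$ is a tensor product of pullbacks of very ample bundles on the two factors, since $\deg(K_\Gamma\otimes\mathcal A)\geq 2\gamma+1$ once $n\geq\gamma+1\geq 3$. It is therefore base point free, and so is $K_{X_n}$. Consequently $\mathcal K>0$ everywhere, $\log\mathcal K$ is smooth, and the Bergman pseudometric is a globally defined smooth semipositive $(1,1)$-form.

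\emph{Step 3 (degeneration).} Since all sections are pulled back, the canonical map of $X_n$ factors as $\Phi_{K_{X_n}}=\Phi_{K_Y\otimes\mathcal L}\circ\epsi$. The Bergman pseudometric is the pullback of the Fubini--Study metric under the canonical map $\Phi_{K_{X_n}}$ (up to the usual normalization, computed in a local frame $\mathcal K=|h|^2\sum|\phi_j|^2$ with $\log|h|^2$ pluriharmonic). It is therefore $\epsi^\ast$ of a smooth semipositive form $\beta$ on $Y$. Near a point of $R_{\epsi}$, take coordinates $(z,w)$ on $X_n$ and $(s,w)$ on $Y$ with $s=z^2$. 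Then
\[
\epsi^\ast\beta(\partial_z,\partial_{\bar z})=4|z|^2\,\beta(\partial_s,\partial_{\bar s})\circ\epsi,
\]
which vanishes on $\{z=0\}=R_{\epsi}$. So the pseudometric has a null vector, the direction transverse to $R_{\epsi}$, at every point of $R_{\epsi}$. This is exactly the degeneration claimed in Proposition~\ref{prop:Bergman-degenerates-ramification}.

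The main point to check carefully is the vanishing $H^0(Y,K_Y)=0$ together with the correct identification of the $\iota$-eigenspaces, so that no anti-invariant form (one not vanishing on $R_{\epsi}$ to first order in $z$) exists. The sign of the eigenspace matters here: the invariant sections of $K_{X_n}$ are $\epsi^\ast(\tau)$ times the local generator $dz\wedge dw$, corresponding to $\epsi^\ast K_Y(R)$. I would verify this directly from the local model $z^2=s$: a form $\epsi^\ast(\tau)/z\cdot (ds\wedge dw)/2 = \tau\,dz\wedge dw$ is well defined. Everything else is routine.
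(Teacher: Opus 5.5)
Your proposal is correct and follows the paper's proof essentially step for step. Both arguments use the projection-formula splitting $H^0(X_n,K_{X_n})\simeq H^0(Y,K_Y\otimes\mathcal L)\oplus H^0(Y,K_Y)$ with $H^0(Y,K_Y)=0$, basepoint-freeness of $K_Y\otimes\mathcal L$ for the global definition, and degeneration because the pseudometric is an $\epsi$-pullback while $\ker \text{d}\epsi\neq 0$ along $R_{\epsi}$; your explicit factor $4|z|^2$ just makes the paper's last sentence concrete.

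One harmless slip: under the natural action of $\iota$ on $2$-forms, $\epsi^\ast(\tau)/z$ is anti-invariant, since $\iota^\ast(\text{d}z\wedge\text{d}w)=-\text{d}z\wedge\text{d}w$, and the invariant holomorphic $2$-forms are exactly $\epsi^\ast H^0(Y,K_Y)=0$. So your eigenspace labels in Step~1 and in the last paragraph are reversed, but the argument only uses $H^0(X_n,K_{X_n})=\epsi^\ast H^0(Y,K_Y\otimes\mathcal L)$, which is unaffected.
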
 \begin{proof}

For a double cover $\epsi : X_n \to Y$ branched along $\mathcal{D} \in | \mathcal{L}^{\otimes 2} |$, $\epsi_{\ast} \mathcal{O}_{X_n} = \mathcal{O}_Y \oplus \mathcal{L}^{-1}$. Since $K_{X_n} = \epsi^{\ast} (K_Y \otimes \mathcal{L})$, the base locus of \(|K_{X_n}|\) is the inverse image under \(\epsi\) of the
base locus of \(|K_Y\otimes\mathcal L|\). Then $K_Y\otimes\mathcal L \simeq p^*\mathcal O_{\mathbf P^1}(2) \otimes \pi^*(K_\Gamma\otimes\mathcal A)$.
Since \(\mathcal O_{\mathbf P^1}(2)\) is basepoint-free and $\deg(K_\Gamma\otimes\mathcal A)=2\gamma-2+n$, the line bundle \(K_\Gamma\otimes\mathcal A\) is basepoint-free. Hence \(K_Y\otimes\mathcal L\) is basepoint-free, and so is \(K_{X_n}\). Therefore the canonical Bergman pseudometric on \(X_n\) is globally defined as a smooth semipositive Hermitian form.
Since $K_{X_n} = \epsi^{\ast} (K_Y \otimes \mathcal{L})$, applying the projection formula to $K_Y \otimes \mathcal{L}$ gives \begin{eqnarray*}
\epsi_{\ast} K_{X_n} \ \simeq \ \epsi_{\ast} \epsi^{\ast} (K_Y \otimes \mathcal{L}) & \simeq & (K_Y \otimes \mathcal{L}) \otimes \epsi_{\ast} \mathcal{O}_{X_n} \\ 
& \simeq & (K_Y \otimes \mathcal{L}) \otimes (\mathcal{O}_Y \oplus \mathcal{L}^{-1}) \ \simeq \ (K_Y \otimes \mathcal{L}) \oplus K_Y.
\end{eqnarray*} Hence, \begin{eqnarray*}
    H^0(X_n,K_{X_n}) & = & H^0(Y,\epsi_{\ast} K_{X_n}) \ \simeq \ H^0(Y,K_Y \otimes \mathcal{L}) \oplus H^0(Y,K_Y).
\end{eqnarray*}

Since \(Y=\mathbf P^1\times\Gamma\), one has \(H^0(Y,K_Y)=0\).
Hence every canonical form on \(X_n\) is pulled back from \(Y\). The
canonical Bergman pseudometric is therefore pulled back by \(\epsi\).
Since \(d\epsi\) has a nontrivial kernel along the ramification divisor
\(R_{\epsi}\), this pull-back pseudometric vanishes in the ramification
direction. Hence it degenerates along \(R_{\epsi}\).

\end{proof}

We close this section by mentioning that Mohsen's complete intersection surfaces \cite{Mohsen2022NegativelyCurvedCompleteIntersections} cannot be used to obtain examples of compact complex surfaces with $\text{HSC}(\omega)<0$ for which Cheung's theorem (obstructing the K\"ahler--Einstein metric from having negative holomorphic sectional curvature) can be applied.

\begin{proposition}\label{prop:Mohsen-Chern-numbers}
    Let $Y$ be a complete intersection surface of multi-degree $(k,k,k,k)$ in a polarized sixfold $(X,\mathcal{L})$. Let $\beta : = c_1(\mathcal{L})$ and $C_j : = c_j(\mathcal{T}_X)$. Then \begin{eqnarray*}
        \frac{c_1^2(Y)}{c_2(Y)} &=& \frac{k^4 C_1^2 \beta^4 - 8k^5 C_1 \beta^5 + 16k^6 \beta^6}{k^4 C_2 \beta^4 - 4k^5 C_1 \beta^5 + 10k^6 \beta^6},
    \end{eqnarray*} and hence, for fixed $\beta$, the Chern slope $c_1^2/c_2 \to 8/5$ for any $k \to \infty$.
\end{proposition}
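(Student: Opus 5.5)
The plan is to compute the Chern classes of $Y$ from the normal bundle sequence and then integrate over $Y$ by pushing forward to $X$. Since $Y$ is the transverse intersection of four hypersurfaces $H_1,\dots,H_4\in|\mathcal{L}^{\otimes k}|$, its normal bundle is $N_{Y/X}\simeq (\mathcal{L}^{\otimes k}|_Y)^{\oplus 4}$. The exact sequence
\[
0\to \mathcal{T}_Y\to \mathcal{T}_X|_Y\to N_{Y/X}\to 0
\]
then gives
\[
c(\mathcal{T}_Y) \ = \ c(\mathcal{T}_X)|_Y\,(1+k\beta)^{-4}.
\]

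\textbf{Step 1 (Chern classes).} Expand $(1+x)^{-4}=1-4x+10x^2-\cdots$ with $x=k\beta$ and keep terms through degree $2$. This gives
\[
c_1(Y)=(C_1-4k\beta)|_Y,\qquad c_2(Y)=(C_2-4k\,C_1\beta+10k^2\beta^2)|_Y,
\]
and therefore
\[
c_1^2(Y)=(C_1^2-8k\,C_1\beta+16k^2\beta^2)|_Y .
\]

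\textbf{Step 2 (Integration).} For a class $\alpha$ of degree $4$ on $X$, we have $\int_Y \alpha|_Y=\int_X \alpha\cdot[Y]$. The fundamental class is $[Y]=[H_1]\cdots[H_4]=(k\beta)^4=k^4\beta^4$. Multiplying the expressions from Step 1 by $k^4\beta^4$ yields exactly the stated numerator $k^4C_1^2\beta^4-8k^5C_1\beta^5+16k^6\beta^6$ and denominator $k^4C_2\beta^4-4k^5C_1\beta^5+10k^6\beta^6$. Their quotient is the claimed formula for $c_1^2(Y)/c_2(Y)$.

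\textbf{Step 3 (Limit).} Fix $\beta$ and divide the numerator and denominator by $k^6$. All terms except the $k^6$ ones tend to zero, since $C_1^2\beta^4$, $C_1\beta^5$ and $C_2\beta^4$ are fixed intersection numbers. Because $\mathcal{L}$ is ample, $\beta^6>0$, so the quotient tends to $16\beta^6/10\beta^6=8/5$.

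There is no real obstacle in this argument. The only point needing care is the coefficient $10$ in $(1+x)^{-4}$, which is $\binom{5}{2}$, together with the sign conventions in the expansion. The slope is then rational in $k$, with leading coefficients $16\beta^6$ and $10\beta^6$.
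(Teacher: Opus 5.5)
Your proposal is correct and follows essentially the same route as the paper: the normal bundle sequence and the Whitney formula give $c_1(Y)=(C_1-4k\beta)|_Y$ and $c_2(Y)=(C_2-4kC_1\beta+10k^2\beta^2)|_Y$, which are then integrated against $[Y]=k^4\beta^4$. The paper first works with general divisor classes via the elementary symmetric polynomials $\sigma_1,\sigma_2$ (so that $\sigma_1^2-\sigma_2=10k^2\beta^2$) and only then specializes, whereas you expand $(1+k\beta)^{-4}$ directly; your remark that $\beta^6>0$ for ample $\mathcal{L}$ also supplies the justification for the limit, which the paper leaves implicit.
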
 \begin{proof}
    Let $X$ be a smooth projective sixfold and let $Y = \mathcal{D}_1 \cap \mathcal{D}_2 \cap \mathcal{D}_3 \cap \mathcal{D}_4 \subset X$ be a smooth transverse complete intersection surface. Let $\alpha_i : = c_1(\mathcal{O}_X(\mathcal{D}_i)) \in H^2(X, \mathbf{Z})$ and $C_i  := c_i(\mathcal{T}_X)$. For a transverse intersection, Poincar\'e duals multiply: $[\mathcal{D}_i \cap \mathcal{D}_j] = [\mathcal{D}_i ] \smile [\mathcal{D}_j]$. Hence, if $[Y]$ denotes the Poincar\'e dual of the fundamental class of $Y$, then $[Y] = \alpha_1 \alpha_2 \alpha_3 \alpha_4 \in H^8(X,\mathbf{Z})$. Because $Y$ is cut out transversely by the divisors $\mathcal{D}_1, ..., \mathcal{D}_4$, the normal bundle $\mathcal{N}_{Y/X}$ splits as \begin{eqnarray*}
    \mathcal{N}_{Y/X} & \simeq & \bigoplus_{i=1}^4 \mathcal{O}_X(\mathcal{D}_i) \vert_Y.
\end{eqnarray*} Hence, there is an exact sequence \begin{eqnarray*}
    0 \to \mathcal{T}_Y \to \mathcal{T}_X \vert_Y \to \bigoplus_{i=1}^4 \mathcal{O}_X(\mathcal{D}_i) \vert_Y \to 0.
\end{eqnarray*} The total Chern class is multiplicative in short exact sequences by the Whitney product formula, thus, $c(\mathcal{T}_X \vert_Y ) = c(\mathcal{T}_Y) c(\mathcal{N}_{Y/X})$. For a line bundle $\mathcal{L} \to X$, if $\beta:=c_1(\mathcal{L})$, then $c(\mathcal{L}) = 1 + \beta$. So \begin{eqnarray*}
    c(\mathcal{N}_{Y/X}) &=& \prod_{i=1}^4 (1+\alpha_i \vert_Y) \ = \ 1 + \sum_{i=1}^4 \sigma_i,
\end{eqnarray*} where $\sigma_i$ is the $i$th elementary symmetric polynomial in the divisor classes $\alpha_1, ..., \alpha_4$. Hence, \begin{eqnarray*}
    c(\mathcal{T}_Y) \ = \ \frac{c(\mathcal{T}_X)\vert_Y}{c(\mathcal{N}_{Y/X})} &=& \frac{(1+C_1  + C_2 )\vert_Y}{1 + \sigma_1 + \sigma_2} \ = \ (1+C_1 + C_2 )(1-\sigma_1 + \sigma_1^2 - \sigma_2) \vert_Y ,
\end{eqnarray*} and therefore, \begin{eqnarray*}
    c_1(\mathcal{T}_Y) &=& (C_1 - \sigma_1) \vert_Y \\
    c_2(\mathcal{T}_Y) &=& (C_2 - C_1  \sigma_1 + \sigma_1^2 - \sigma_2) \vert_Y.
\end{eqnarray*} Since $[Y] = \alpha_1 \cdots \alpha_4$, we have \begin{eqnarray*}
    c_1^2(Y) &=& \int_X (C_1 \vert_Y - \sigma_1)^2 \alpha_1 \cdots \alpha_4, \\
    c_2(Y) &=& \int_X (C_2 - C_1 \sigma_1 + \sigma_1^2 - \sigma_2) \vert_Y \alpha_1 \cdots \alpha_4. 
\end{eqnarray*} For Mohsen's equal-degree complete intersections, let $\beta : = c_1(\mathcal{L})$ and let $\mathcal{D}_i \in | \mathcal{L}^{\otimes k} |$, $\alpha_i =k \beta$. Then $\sigma_1 = 4 k \beta$, $\sigma_2 = 6 k^2 \beta^2$, $[Y] = k^4 \beta^4$. Thus \begin{eqnarray*}
    c_1^2(Y) &=& k^4 \int_X (C_1 - 4k\beta)^2 \beta^4  \ = \ k^4 C_1^2 \beta^4 - 8k^5 C_1 \beta^5 + 16k^6 \beta^6, \\
    c_2(Y) &=& k^4 \int_X (C_2 - 4kC_1 \beta + 10k^2 \beta^2) \beta^4 \ = \ k^4 C_2 \beta^4 - 4k^5 C_1 \beta^5 + 10k^6 \beta^6.
\end{eqnarray*} The Chern slope is \begin{eqnarray*}
    \frac{c_1^2(Y)}{c_2(Y)} &=& \frac{k^4 C_1^2 \beta^4 - 8k^5 C_1 \beta^5 + 16k^6 \beta^6}{k^4 C_2 \beta^4 - 4k^5 C_1 \beta^5 + 10k^6 \beta^6},
\end{eqnarray*} and hence, for fixed $\mathcal{L}$, the Chern slope tends to $8/5$ as $k \to \infty$.
\end{proof}

\section{Kobayashi hyperbolic surfaces that do not have nonpositive holomorphic sectional curvature }\label{sec:Demailly}
\noindent For any $s \in \mathbf{Q} \cap \left( \frac{2}{7}, \frac{2}{3} \right)$, the surfaces $X_n$ with $\text{HSC}(\omega)<0$ constructed in Section~\ref{sec:4} have Chern slope $c_1^2(X_n)/c_2(X_n) = s$. In particular, for $s \in \mathbf{Q} \cap \left( \frac{2}{7}, \frac{1}{3} \right)$, these surfaces admit a Hermitian metric $\omega$ with $\text{HSC}(\omega)<0$, while their Kähler--Einstein metric does not have $\text{HSC}<0$; moreover, by Proposition~\ref{prop:Bergman-degenerates-ramification}, the canonical Bergman pseudometric degenerates along the ramification divisor. 
The following result is the key obstruction in the construction
of a Kobayashi hyperbolic surface with no Hermitian metric of nonpositive
holomorphic sectional curvature in \cite{Demailly}. It is also an essential tool in the proof of Theorem~\ref{thm:Chern-numbers}. The surfaces constructed in Theorem~\ref{thm:Chern-numbers} provide a broad range of examples with the same two features as Demailly's example. We recall Demailly's construction and compute the Chern slopes of the resulting surfaces, for comparison, in Subsection~\ref{Dem-examples}.

\begin{theorem}\label{thm:Dem-Obs}
(Demailly). Let $f : C \to X$ be a non-constant holomorphic map from a projective curve $C$ to a Hermitian manifold $(X,\g)$, and let $m_p$ be the multiplicity of $f$ at $p \in C$. If $\text{HSC}(\g) \leq  -\kappa \leq 0$, then \begin{eqnarray*}
    2g(C) - 2 & \geq & \frac{\kappa}{2\pi} \deg_{\omega}(C) + \sum_{p \in C} (m_p-1), \qquad \deg_{\omega}(C) : = \int_C f^{\ast} \omega.
\end{eqnarray*} In particular, for $\kappa =0$, this implies that $2g(C) -2 \geq \sum_{p \in C} (m_p-1)$.
\end{theorem}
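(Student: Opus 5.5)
The plan is to reduce the statement to a degree computation for a holomorphic line subbundle of $f^{\ast}\mathcal{T}_X$, and then apply the curvature-decreasing property of holomorphic subbundles. The differential $\text{d}f$ defines a holomorphic section of $\text{Hom}(\mathcal{T}_C, f^{\ast}\mathcal{T}_X)$. In a local coordinate $t$ centered at $p$, one has $f'(t) = t^{m_p-1} w(t)$ with $w(0)\neq 0$. Hence this section vanishes exactly along the effective divisor $Z := \sum_{p\in C}(m_p-1)\,p$, which is finite because $f$ is non-constant. Consequently $\text{d}f$ induces an injective bundle map $\mathcal{L} := \mathcal{T}_C\otimes\mathcal{O}_C(Z) \hookrightarrow f^{\ast}\mathcal{T}_X$ whose image is a genuine holomorphic line subbundle, saturated everywhere, with local frame $w(t)$. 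We have $\deg \mathcal{L} = 2-2g(C) + \sum_p (m_p-1)$. Let $h$ be the smooth Hermitian metric on $\mathcal{L}$ induced by $f^{\ast}\g$.

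The key step is the curvature comparison. For a holomorphic subbundle of a Hermitian holomorphic vector bundle, the Chern curvature of the induced metric equals the restriction of the ambient Chern curvature minus a semipositive term coming from the second fundamental form. This is the Griffiths formula, and it requires no Kähler assumption on $\g$, only that we use Chern connections. We apply it to $\mathcal{L}\subset f^{\ast}\mathcal{T}_X$, where the curvature of $f^{\ast}\mathcal{T}_X$ is the pullback of the Chern curvature $\text{R}$ of $\g$. Evaluating on a local frame $e = w(t)$ of $\mathcal{L}$, which over $C\setminus Z$ is a nonzero multiple of $f_{\ast}\partial_t$, and on the tangent vector $\partial_t$, we obtain
\begin{eqnarray*}
\Theta_h(\partial_t,\bar\partial_t) & \leq & \frac{\text{R}(f_\ast\partial_t,\overline{f_\ast\partial_t}, e,\bar e)}{|e|^2} \ = \ \text{HSC}(f_\ast\partial_t)\,|f_\ast\partial_t|_{\g}^2 \ \leq \ -\kappa\, |f_\ast\partial_t|^2_{\g}.
\end{eqnarray*}
Here we use that $e$ and $f_\ast\partial_t$ span the same line, so the middle term is exactly the holomorphic sectional curvature in that direction. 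This gives the pointwise inequality $\frac{\sqrt{-1}}{2\pi}\Theta_h \leq -\frac{\kappa}{2\pi} f^{\ast}\omega$ on $C\setminus Z$. Both sides are smooth $(1,1)$-forms on all of $C$, since $h$ is a smooth metric on $\mathcal{L}$, so the inequality extends to $C$ by continuity.

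Integrating and using Chern--Weil gives
\begin{eqnarray*}
2-2g(C)+\sum_{p}(m_p-1) \ = \ \deg\mathcal{L} \ = \ \frac{\sqrt{-1}}{2\pi}\int_C\Theta_h \ \leq \ -\frac{\kappa}{2\pi}\deg_\omega(C),
\end{eqnarray*}
which rearranges to the claimed inequality. The case $\kappa=0$ is immediate from the same estimate.

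I expect the main obstacle to be the careful justification of the curvature-decreasing inequality in the non-Kähler setting. In particular, the relevant quadratic expression must be $\text{R}(\xi,\bar\xi,\xi,\bar\xi)$ with $\xi$ tangent to $f(C)$, which is the holomorphic sectional curvature as defined in the paper (Chern connection, first pair of indices the curvature form). It must not be confused with a different index contraction, which for a non-Kähler metric would give a different quantity. The fact that the bundle direction and the form direction coincide here, because $\mathcal{L}$ is the image of $\text{d}f$, is exactly what makes the HSC hypothesis, rather than a bisectional hypothesis, sufficient. A secondary point is the twist by $\mathcal{O}_C(Z)$, which handles the branch points cleanly and avoids working with singular metrics and Lelong numbers.
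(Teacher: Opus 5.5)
Your proof is correct. It is essentially Demailly's argument, which the paper cites rather than reproduces: the curvature-decreasing property for the saturated line subbundle of $f^{\ast}\mathcal{T}_X$ generated by $\text{d}f$, followed by Chern--Weil on $C$, and as the paper's remark says, this uses only the compactness of $C$. The one cosmetic difference is how the zeros of $\text{d}f$ are handled: you twist by $\mathcal{O}_C(Z)$ to keep the metric smooth, whereas one can equivalently keep the degenerate metric $f^{\ast}\g$ on $\mathcal{T}_C$ and obtain $\sum_p (m_p-1)$ from the Dirac masses given by the Lelong--Poincar\'e formula.
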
 

\emph{Remark.} Theorem~\ref{thm:Dem-Obs} is typically stated for compact targets; the proof only integrates over $C$, so the same inequality holds independent of compactness or completeness.

\subsection{Proof of Theorem~\ref{thm:Chern-numbers}}\label{sec:Chern-numbers}

\noindent Let \(S\) be the Stover surface, i.e., the smooth arithmetic Hurwitz ball quotient of Euler number \(e(S)=63\) constructed by Stover \cite{Stover2014HurwitzBallQuotients}. Let $\alpha : S \to A:=\text{Alb}(S)$ be its Albanese map, and write $\mathcal{E} : = \mathbf{C}/\mathbf{Z}[\zeta_3]$, where $\zeta_3$ is a third root of unity. From \cite{DzambicRoulleau2017}, $A \simeq \mathcal{E}^7$, the image of the Albanese map $\alpha(S)$ is two-dimensional, and the wedge-product map \begin{eqnarray*}
    \varphi^{2,0} : \Lambda^2 H^0(S, \Omega_S^1) \longrightarrow H^0(S,K_S),
\end{eqnarray*} has a $7$-dimensional kernel containing no nonzero decomposable elements. Let $q_B : A \simeq  \mathcal{E}^7 \to B : = \mathcal{E}^2$ be the projection onto the first two factors, and set $\beta : = q_B \circ \alpha$. The morphism $\beta : S \to B$ is surjective and generically finite. Indeed, if $\eta_1, \eta_2 \in H^0(B,\Omega_B^1)$ form a basis, set $\omega_i : = \beta^\ast \eta_i = \alpha^{\ast} q^{\ast}_B \eta_i \in H^0(S, \Omega_S^1)$, for $i=1,2$. Since $\alpha^{\ast} : H^0(A,\Omega_A^1) \to H^0(S,\Omega_S^1)$ is an isomorphism, the forms $\omega_1$, $\omega_2$ are linearly independent. Hence, $\omega_1 \wedge \omega_2$ is a nonzero decomposable element of $\Lambda^2 H^0(S,\Omega_S^1)$. Since no nonzero decomposable element lies in the kernel of $\varphi^{2,0}$, we see that \begin{eqnarray*}
    0 & \neq & \varphi^{2,0}(\omega_1 \wedge \omega_2) \ = \ \omega_1 \wedge \omega_2 \ = \ \beta^{\ast} (\eta_1 \wedge \eta_2) \in H^0(S,K_S).
\end{eqnarray*} Hence, the Jacobian of $\beta$ is not identically zero, and $\text{d}\beta$ has rank $2$ at some point. Hence, $\dim \beta(S)=2$, and since $S$ is compact, $\beta(S)$ is a closed irreducible subset of the irreducible surface $B$. Therefore, $\beta(S)=B$, and $\beta$ is surjective. Since $\dim(S) = \dim(B)=2$, the surjective morphism $\beta: S \to B$ is generically finite.

\emph{Notation.} Write $\delta:=\deg(\beta)$, denote by $\mathcal{F}_1, \mathcal{F}_2$ the fiber classes of the two projections $B \simeq \mathcal{E}^2 \to \mathcal{E}$, and set $\mathcal{F}: = 3\mathcal{F}_1+3\mathcal{F}_2$. Also introduce $\lambda:= K_S \cdot \beta^{\ast} \mathcal{F}$, and $\mu : = (\beta^{\ast}\mathcal{F})^2$. Because $S$ is a compact ball quotient, $c_1^2(S) = 3c_2(S)$. Further, $\mathcal{F}$ is ample on $B$, so $\lambda>0$, and $\mathcal{F}^2 = (3\mathcal{F}_1+3\mathcal{F}_2)^2 = 18$, which implies that $\mu = (\beta^{\ast} \mathcal{F})^2 = \delta \mathcal{F}^2 = 18 \delta > 0$. Fix a constant $t>0$ to be determined later and set $a_m : = 4 + \lfloor tm^2 \rfloor$.

The morphism $\beta:S\longrightarrow B$ induces a surjective morphism on fundamental groups. Since \(B\) is an
abelian surface, $\pi_1(B)\simeq H_1(B,\mathbf Z)$.
Thus $\beta_*:\pi_1(S)\longrightarrow \pi_1(B)$ factors through the abelianization $\pi_1(S)\twoheadrightarrow H_1(S,\mathbf Z)$.

The Albanese morphism $\alpha:S\longrightarrow A:=\operatorname{Alb}(S)$
induces an isomorphism on the free parts of first homology $H_1(S,\mathbf Z)/\operatorname{tors} \simeq H_1(A,\mathbf Z)$.
By \cite{DzambicRoulleau2017}, $H_1(S,\mathbf Z)\simeq \mathbf Z^{14}$.
Hence \(H_1(S,\mathbf Z)\) is torsion-free, and $\alpha_*:H_1(S,\mathbf Z)\longrightarrow H_1(A,\mathbf Z)$ is an isomorphism.
Since
$A\simeq \mathcal E^7
$
and
$q_B:\mathcal E^7\longrightarrow \mathcal E^2
$
is a coordinate projection, the induced map
$(q_B)_*:H_1(A,\mathbf Z)\longrightarrow H_1(B,\mathbf Z)
$
is surjective. Therefore
\[
\beta_*:
\pi_1(S)
\twoheadrightarrow
H_1(S,\mathbf Z)
\xrightarrow{\ \alpha_*\ }
H_1(A,\mathbf Z)
\xrightarrow{\ (q_B)_*\ }
H_1(B,\mathbf Z)
\simeq
\pi_1(B)
\]
is surjective.

For each \(m\geq 1\), let
$[m]:B\longrightarrow B
$
be the multiplication-by-\(m\) morphism, and set
$S_m:=S\times_{B,[m]}B.
$

Since \([m]\) is finite and étale, and since this last property is stable under base
change,
$p_m:S_m\longrightarrow S
$
is finite and étale.

The cover
$[m]:B\longrightarrow B
$
corresponds to the subgroup
$m\pi_1(B)\subset \pi_1(B).
$
Hence
$p_m:S_m\longrightarrow S
$
corresponds to the subgroup
$\beta_*^{-1}\bigl(m\pi_1(B)\bigr)\subseteq \pi_1(S).
$
Since \(\beta_*\) is surjective, the cover \(p_m\) is connected. Its degree
is
\[
        \deg(p_m)
        =
        [\pi_1(S):\beta_*^{-1}(m\pi_1(B))]
        =
        [\pi_1(B):m\pi_1(B)].
\]
Since
$B\simeq \mathcal E^2,
$
one has
$\pi_1(B)\simeq \mathbf Z^4
$
and hence
$\deg(p_m)=m^4.
$

Each \(S_m\) is a compact ball quotient, being a finite étale cover of the
compact ball quotient \(S\).

From the fact that $p_m$ is \'etale,  \begin{eqnarray*}
K_{S_m} \ = \ p_m^{\ast} K_S, \qquad c_1^2(S_m) \ = \ m^4 c_1^2(S) \ = \ 3c_2(S)m^4, \qquad c_2(S_m) \ = \ m^4 c_2(S).
\end{eqnarray*} The map $\beta_m$ is proper and generically finite, and degree is preserved under base change, so $\deg(\beta_m)=\delta$. Moreover, since $[m]^*\mathcal F \equiv m^2\mathcal F$ on an abelian surface, $p_m^{\ast} \beta^{\ast} \mathcal{F} = \beta_m^{\ast} [m]^{\ast} \mathcal{F} \equiv m^2 \beta_m^{\ast} \mathcal{F}$, where $\equiv$ denotes the numerical equivalence of divisors. Therefore, \begin{eqnarray*}
m^2 K_{S_m} \cdot \beta_m^{\ast} \mathcal{F} &=& K_{S_m} \cdot p_m^{\ast} \beta^{\ast} \mathcal{F} \ = \ p_m^{\ast} K_S \cdot p_m^{\ast} \beta^{\ast} \mathcal{F} \ = \ m^4(K_S \cdot \beta^{\ast} \mathcal{F}),
\end{eqnarray*} and so $K_{S_m} \cdot \beta_m^{\ast} \mathcal{F} = \lambda m^2$. Similarly, $m^4 (\beta_m^{\ast} \mathcal{F})^2 = (p_m^{\ast} \beta^{\ast} \mathcal{F})^2 = m^4 (\beta^{\ast} \mathcal{F})^2$, hence $(\beta_m^{\ast} \mathcal{F})^2 = \mu$.

\begin{lemma}\label{lem:jets-on-E2}
Let \( \mathcal{L} \) be a line bundle of degree \(1\) on \(\mathcal E\), and set $\mathcal O_B(\mathcal F)\simeq p_1^* \mathcal{L}^{\otimes 3}\otimes p_2^* \mathcal{L}^{\otimes 3}$. If \(a\geq 4\), then for every point \(q\in B\), the \(19\)-jet map
\[
H^0\bigl(B,\mathcal O_B(2a\,\mathcal F)\bigr)\longrightarrow \mathcal O_{B,q}/\mathfrak m_q^{20}
\]
is surjective, and the linear system $H^0\bigl(B,\mathcal O_B(2a\,\mathcal F)\otimes \mathfrak m_q^{20}\bigr)$ is basepoint-free on \(B\setminus\{q\}\).
\end{lemma}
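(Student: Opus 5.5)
The plan is to reduce both claims to a vanishing statement for line bundles on the abelian surface $B=\mathcal E^2$, using the Künneth decomposition and the fact that on an elliptic curve a line bundle of degree $d$ separates $k$-jets whenever $d\geq k+2$.

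Set $\mathcal M:=\mathcal O_B(2a\mathcal F)\simeq p_1^*\mathcal L^{\otimes 6a}\otimes p_2^*\mathcal L^{\otimes 6a}$. For $a\geq4$ each factor has degree $d=6a\geq 24$.

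\emph{Surjectivity of the 19-jet map.} Fix $q=(q_1,q_2)$. The ideal $\mathfrak m_q^{20}$ contains the product ideal $\mathfrak m_{q_1}^{20}\boxtimes\mathfrak m_{q_2}^{20}$, i.e.\ the ideal generated by $p_1^*\mathfrak m_{q_1}^{20}$ and $p_2^*\mathfrak m_{q_2}^{20}$. It therefore suffices to show surjectivity of
\[
H^0(B,\mathcal M)\longrightarrow \mathcal O_B/(p_1^*\mathfrak m_{q_1}^{20}+p_2^*\mathfrak m_{q_2}^{20})\simeq \mathcal O_{\mathcal E,q_1}/\mathfrak m_{q_1}^{20}\otimes\mathcal O_{\mathcal E,q_2}/\mathfrak m_{q_2}^{20}.
\]
By Künneth, $H^0(B,\mathcal M)=H^0(\mathcal E,\mathcal L^{6a})\otimes H^0(\mathcal E,\mathcal L^{6a})$, and this map is the tensor product of the two one-dimensional $19$-jet maps. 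Each factor is surjective because $H^1(\mathcal E,\mathcal L^{6a}(-20q_i))=0$, which holds since $\deg=6a-20\geq4>0$. A tensor product of surjections is surjective.

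\emph{Basepoint-freeness off $q$.} Let $q'\neq q$. We must find a section vanishing to order $20$ at $q$ but not at $q'$. It suffices to use products $s_1\boxtimes s_2$ with $s_i\in H^0(\mathcal E,\mathcal L^{6a})$, noting that $\mathrm{ord}_q(s_1\boxtimes s_2)\geq \mathrm{ord}_{q_1}s_1+\mathrm{ord}_{q_2}s_2$. Split into cases.
\begin{enumerate}
\item If $q_1'\neq q_1$, take $s_1\in H^0(\mathcal L^{6a}(-20q_1))$ with $s_1(q_1')\neq0$. This exists because $\mathcal L^{6a}(-20q_1)$ has degree $\geq4\geq2$, hence is basepoint-free. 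Take $s_2$ with $s_2(q_2')\neq0$.
\item If $q_1'=q_1$, then $q_2'\neq q_2$, and we argue symmetrically with the roles of the two factors exchanged.
\end{enumerate}
In both cases the product lies in $H^0(\mathcal M\otimes\mathfrak m_q^{20})$ and does not vanish at $q'$.

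The only genuinely delicate point is the ideal-theoretic inclusion $p_1^*\mathfrak m_{q_1}^{20}+p_2^*\mathfrak m_{q_2}^{20}\subseteq\mathfrak m_q^{20}$, together with the identification of the quotient. In local coordinates $(x,y)$ the left side is $(x^{20},y^{20})$. It is contained in $\mathfrak m^{20}$ but not equal to it, and $\mathcal O/\mathfrak m^{20}$ is a quotient of $\mathcal O/(x^{20},y^{20})$. So surjectivity onto the larger truncation implies surjectivity onto $\mathcal O_{B,q}/\mathfrak m_q^{20}$, which is exactly the direction we need. The rest is routine degree counting, with ample room since $6a\geq24$.
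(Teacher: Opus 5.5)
Your proof is correct and follows essentially the paper's route: both reduce to the elliptic factors via products $p_1^*s_1\otimes p_2^*s_2$ of sections of $\mathcal L^{\otimes 6a}$. Your basepoint-freeness case split is the paper's, minus its unnecessary requirements that $\tau(q_2)\neq 0$ and that the vanishing order at $q$ be exactly $20$. For surjectivity, the paper builds sections $\sigma_k,\tau_\ell$ of exact vanishing orders and uses the triangular expansion $c_{k\ell}u^kv^\ell+\cdots$. You instead package the same content as the tensor product of the two one-variable jet maps onto $\mathcal O/(x^{20},y^{20})$, followed by the quotient to $\mathcal O/\mathfrak m^{20}$, which is a slightly cleaner way to say the same thing.
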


\begin{proof}
Since $\mathcal O_B(2a\,\mathcal F)\simeq p_1^* \mathcal{L}^{\otimes 6a}\otimes p_2^* \mathcal{L}^{\otimes 6a}$,  it is enough to work with the line bundle \(\mathcal{L}^{\otimes 6a}\) on \(\mathcal E\). Fix \(q=(q_1,q_2)\in B\), and choose local coordinates \(u,v\) centered at \(q_1,q_2\) on the two factors. Since \(a\geq 4\), one has \(6a\geq 24\). Let \(0\leq k,\ell\leq 19\). By Riemann--Roch on the elliptic curve \(\mathcal E\),
\[
h^0\bigl(\mathcal{L}^{\otimes 6a}(-kq_1)\bigr)=6a-k
\quad\text{and}\quad
h^0\bigl(\mathcal{L}^{\otimes 6a}(-(k+1)q_1)\bigr)=6a-k-1,
\]
so there exists \(\sigma_k\in H^0(\mathcal E,\mathcal{L}^{\otimes 6a})\) with \(\text{ord}_{q_1}(\sigma_k)=k\). Similarly, there exists \(\tau_\ell\in H^0(\mathcal E,\mathcal{L}^{\otimes 6a})\) with \(\text{ord}_{q_2}(\tau_\ell)=\ell\). Then \(  p_1^*\sigma_k\otimes p_2^*\tau_\ell\in H^0\bigl(B,\mathcal O_B(2a\,\mathcal F)\bigr) \) has local expansion \(c_{k\ell}u^kv^\ell+\text{higher-order terms}\) with \(c_{k\ell}\neq 0\). Since the monomials \(u^kv^\ell\) with \(k+\ell\leq 19\) form a basis of \(\mathcal O_{B,q}/\mathfrak m_q^{20}\), the \(19\)-jet map is surjective. Now fix \(x=(x_1,x_2)\in B\setminus\{q\}\). We must construct a section of \(\mathcal O_B(2a\,\mathcal F)\otimes \mathfrak m_q^{20}\) that does not vanish at \(x\). Suppose first that \(x_1\neq q_1\). Again by Riemann--Roch,
\[ h^0\bigl( \mathcal{L}^{\otimes 6a}(-20q_1)\bigr)=6a-20, \quad h^0\bigl( \mathcal{L}^{\otimes 6a}(-21q_1)\bigr)=6a-21, \] and \( h^0\bigl(\mathcal{L}^{\otimes 6a}(-20q_1-x_1)\bigr)=6a-21. \) Hence \(H^0(\mathcal E, \mathcal{L}^{\otimes 6a}(-20q_1))\) is not contained in the union of the two proper linear subspaces
\[ H^0(\mathcal E,\mathcal{L}^{\otimes 6a}(-21q_1)) \quad\text{and}\quad H^0(\mathcal E, \mathcal{L}^{\otimes 6a}(-20q_1-x_1)). \] Choose \(\sigma\) outside that union. Then \(\text{ord}_{q_1}(\sigma)=20\) and \(\sigma(x_1)\neq 0\). Next choose \(\tau\in H^0(\mathcal E, \mathcal{L}^{\otimes 6a})\) such that \(\tau(q_2)\neq 0\) and \(\tau(x_2)\neq 0\). This is possible because the subspaces of sections vanishing at \(q_2\) and at \(x_2\) are proper, so their union does not exhaust \(H^0(\mathcal E, \mathcal{L}^{\otimes 6a})\). Then \( p_1^*\sigma\otimes p_2^*\tau \) vanishes to order exactly \(20\) at \(q\), hence lies in \(H^0(B,\mathcal O_B(2a\,\mathcal F)\otimes \mathfrak m_q^{20})\), and is nonzero at \(x\). If \(x_1=q_1\), then \(x_2\neq q_2\), and the same argument applies after exchanging the two factors. Therefore \(H^0(B,\mathcal O_B(2a\,\mathcal F)\otimes \mathfrak m_q^{20})\) is basepoint-free on \(B\setminus\{q\}\).
\end{proof}
    Define $f(u,v) : = u^3 \prod_{j=1}^5 (v-ju)^3 + u^{19} + v^{19}$. This has multiplicity $18$ at the origin.

\begin{lemma}\label{lem:4}
Let $C$ be the germ of a curve at the origin in $\mathbf{C}^2$ whose local equation is $f(u,v)+g(u,v) =0$, where $g \in \mathfrak{m}^{20}$. After blowing up the origin, the strict transform of $C$ is smooth, and meets the exceptional divisor in exactly six points, each with contact order $3$.
\end{lemma}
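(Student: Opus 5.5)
The plan is to compute the strict transform explicitly in the two standard affine charts of the blow-up $\pi:\widetilde{\mathbf{C}^2}\to\mathbf{C}^2$ at the origin. The key structural fact is that $f$ has multiplicity $18$ and its lowest-order homogeneous part is
\[
f_{18}(u,v)\ =\ u^3\prod_{j=1}^5 (v-ju)^3 .
\]
This is a product of six distinct linear forms, each raised to the third power. The strict transform therefore meets the exceptional divisor $E\simeq\mathbf{P}^1$ only at the six points corresponding to these tangent lines: $[u:v]=[1:j]$ for $j=1,\dots,5$, and $[u:v]=[0:1]$. Moreover, the degree-$19$ term $u^{19}+v^{19}$ is what will produce smoothness there.

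First I would work in the chart $v=uw$, with $E=\{u=0\}$. Since $g\in\mathfrak m^{20}$, we have $g(u,uw)=u^{20}h(u,w)$ with $h$ holomorphic. This gives
\[
(f+g)(u,uw)\ =\ u^{18}\Bigl(\prod_{j=1}^5(w-j)^3+u\,(1+w^{19})+u^2h(u,w)\Bigr).
\]
So the strict transform is $\widetilde C=\{F=0\}$, where $F$ is the bracket.
\begin{itemize}
\item On $E$ we have $F(0,w)=\prod_{j}(w-j)^3$. Hence in this chart $\widetilde C\cap E=\{w=1,\dots,5\}$, and the restriction of $F$ to $E$ vanishes to order exactly $3$ at each of these points. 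That is the contact order $3$.
\item At $(0,j)$ we have $\partial_uF(0,j)=1+j^{19}\neq0$. So $\widetilde C$ is smooth there, transverse to the $u$-direction, and locally a graph $u=\phi(w)$ with $\phi$ vanishing to order exactly $3$ at $w=j$.
\end{itemize}

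Second, I would do the same in the chart $u=vs$, with $E=\{v=0\}$. Here
\[
(f+g)(vs,v)\ =\ v^{18}\Bigl(s^3\prod_{j=1}^5(1-js)^3+v\,(s^{19}+1)+v^2\tilde h(v,s)\Bigr).
\]
The only new point of $E$ not already seen in the first chart is $s=0$. There the restriction to $E$ is $s^3\cdot(\text{unit})$, giving contact order $3$, and $\partial_v=1\neq0$, giving smoothness.

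As a consistency check, the total intersection is $\widetilde C\cdot E=18=6\cdot 3$, which matches the multiplicity. There is no genuine obstacle in this argument. The only points needing care are, first, verifying that $g$ contributes only terms divisible by $u^{20}$ (respectively $v^{20}$), so that after dividing by $u^{18}$ it affects neither $F|_E$ nor $\partial_uF$ along $E$; and second, checking that $1+j^{19}\neq0$ for $j=1,\dots,5$, which is clear since these are positive integers.
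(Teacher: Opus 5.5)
Your proposal is correct and follows the paper's proof essentially verbatim. You use the same two blow-up charts and the same factorization $u^{18}(\cdots)$, with $g$ entering only at order $u^{20}$ (respectively $v^{20}$), and the same nonvanishing derivatives $1+j^{19}$ and $1$ along $E$ to get smoothness and contact order $3$ at the six points.
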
 \begin{proof} In the blow-up chart $u = x$, $v = xt$, \begin{eqnarray*}
f(x,xt) + g(x,xt) &=& x^{18} \left( \prod_{j=1}^5 (t-j)^3 + x(1+t^{19}) + x^2 R_1(x,t) \right), 
\end{eqnarray*} for some holomorphic function $R_1$. Hence, the strict transform is given by \begin{eqnarray*}
F_1(x,t) & : = & \prod_{j=1}^5  (t-j)^3 + x(1+t^{19}) + x^2 R_1(x,t) \ = \ 0.
\end{eqnarray*} The exceptional divisor is $x=0$. Thus, in this chart, the strict transform meets the exceptional divisor at $t = 1, ..., 5$, each with contact order $3$.  Moreover, $\partial_x F_1(0,j) = 1 + j^{19} \neq 0$, so the strict transform is smooth there. In the second chart $u = xt$, $v= x$, \begin{eqnarray*}
f(xt,x) + g(xt,x) &=& x^{18} \left( t^3 \prod_{j=1}^5 (1-jt)^3 +x(t^{19}+1)+x^2R_2(x,t) \right), 
\end{eqnarray*} for some holomorphic $R_2$. Hence, the strict transform is \begin{eqnarray*}
F_2(x,t) &: = & t^3 \prod_{j=1}^5 (1-jt)^3 + x (1+t^{19}) + x^2 R_2(x,t) \ = \ 0.
\end{eqnarray*} In this chart, the only new point on the exceptional divisor is $t=0$, again with contact order $3$, and $\partial_x F_2(0,0)=1 \neq 0$, and so the strict transform is smooth there. 
\end{proof}

\noindent \noindent Fix \(m\). Let \(\Omega_m\subset B\) be the open subset over which \(\beta_m\) is finite étale of degree \(\delta\), and choose \(q_m\in \Omega_m\). Choose local coordinates \((u,v)\) centered at \(q_m\), and choose a local trivialization of $\mathcal L_m:=\mathcal O_B(2a_m\mathcal F)$ near $q_m$. Let $\bar f\in \mathcal O_{B,q_m}/\mathfrak m_{q_m}^{20}$ be the class represented by the polynomial \begin{eqnarray*}
    f(u,v) &=& u^3\prod_{j=1}^5(v-ju)^3+u^{19}+v^{19}.
\end{eqnarray*} Define the affine space of sections \begin{eqnarray*}
    \mathcal W_m & := & \left\{ s\in H^0(B,\mathcal L_m): j^{19}_{q_m}(s)=\bar f \right\}.
\end{eqnarray*} By Lemma~\ref{lem:jets-on-E2}, the space \(\mathcal W_m\) is nonempty, and $T_m := H^0(B, \mathcal{L}_m \otimes \mathfrak{m}_{q_m}^{20})$ is basepoint-free on $B \setminus \{ q_m \}$.  

Let \(Z_m\subset B\) be the union of the non-finite values of \(\beta_m\), the singular locus of the reduced branch divisor of \(\beta_m\), and the finite set of points of the smooth branch locus over which \(\beta_m\) is not analytically of simple ramification type. For a general section \(s_m\in\mathcal W_m\), set \begin{eqnarray*}
    C_m & := & (s_m=0)\in |2a_m\mathcal F|.
\end{eqnarray*} Bertini's theorem, applied to the affine linear system \(s_0+T_m\) for any fixed \(s_0\in\mathcal W_m\), implies that \(C_m\) is smooth on \(B\setminus\{q_m\}\), avoids \(Z_m\), and meets the smooth locus of \(\operatorname{Br}(\beta_m)\) transversely.

At \(q_m\), the curve \(C_m\) has local equation $f+g =0$, $g \in \mathfrak{m}_{q_m}^{20}$. Hence \(q_m\) is the unique singular point of \(C_m\), and it has multiplicity \(18\). Let
\[
\beta_m^{-1}(q_m)=\{p_{m,1},\ldots,p_{m,\delta}\}.
\]
Because \(q_m\in \Omega_m\), these are \(\delta\) distinct points. Let $\sigma_m:Y_m\to S_m$ be the blow-up of these \(\delta\) points, and denote the exceptional curves by \(E_{m,1},\ldots,E_{m,\delta}\). Let \( \mathcal{D}_m\subset Y_m\) be the strict transform of \(\beta_m^*C_m\).

\begin{proposition}\label{prop:branch-and-exceptional-curves}
The divisor \( \mathcal{D}_m\) is smooth. For each \(i\), it meets \(E_{m,i}\simeq \mathbf P^1\) in exactly six points, each with contact order \(3\). Moreover,
\[
\mathcal{D}_m \in | 2\mathcal{L}_m | ,
\qquad
\mathcal{L}_m:=\sigma_m^*(a_m\beta_m^*\mathcal F)-9\sum_{i=1}^{\delta}E_{m,i}.
\]
Consequently, there exists a smooth double cover $\pi_m:X_m\to Y_m$ branched along \( \mathcal{D}_m\). For each \(i\), set $C_{m,i}:=\pi_m^{-1}(E_{m,i})\subset X_m$.  Then \(C_{m,i}\) has exactly six cusps, one above each point of \( \mathcal{D}_m\cap E_{m,i}\). If $\nu_{m,i}:\widetilde C_{m,i}\to C_{m,i}$ denotes the normalization, then the composite
\[
\widetilde C_{m,i}\xrightarrow{\nu_{m,i}} C_{m,i}\hookrightarrow X_m\xrightarrow{\pi_m} E_{m,i}\simeq \mathbf P^1
\]
is a connected double cover branched at those six points. In particular, $g(\widetilde C_{m,i})=2$. Moreover, \(\nu_{m,i}\) has multiplicity \(2\) at each cusp and is an immersion elsewhere.
\end{proposition}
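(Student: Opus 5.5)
The proof splits into four steps: the local analysis of $\beta_m^{\ast}C_m$ near the points $p_{m,i}$, its local analysis away from them, the divisor class computation, and the analysis of the double cover over each $E_{m,i}$.

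\emph{Near the $p_{m,i}$.} Since $q_m\in\Omega_m$, the map $\beta_m$ is a local biholomorphism near each $p_{m,i}$. Pulling back the coordinates $(u,v)$ centered at $q_m$ therefore gives coordinates centered at $p_{m,i}$. In these coordinates $\beta_m^{\ast}C_m$ has local equation $f+g=0$ with $g\in\mathfrak m^{20}$. Blowing up $p_{m,i}$ is then literally the blow-up in Lemma~\ref{lem:4}. That lemma shows the strict transform is smooth near $E_{m,i}$ and meets $E_{m,i}\simeq\mathbf P^1$ in exactly six points with contact order $3$ each, for a total intersection $18$.

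\emph{Away from the $p_{m,i}$.} Here $\mathcal D_m\simeq\beta_m^{\ast}C_m$, and I check smoothness pointwise using the Bertini conclusions.
\begin{itemize}
\item Over points of $B\setminus\operatorname{Br}(\beta_m)$ the map $\beta_m$ is étale, and $C_m$ is smooth off $q_m$, so the pullback is smooth.
\item $C_m$ avoids $Z_m$, so the only remaining points lie over the smooth branch locus where $\beta_m$ has simple ramification. There I use local coordinates with $\beta_m(z,t)=(z^2,t)$ and branch curve $\{w=0\}$.
\item Since $C_m$ meets $\{w=0\}$ transversely, locally $C_m=\{t=\phi(w)\}$. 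Its pullback is $\{t=\phi(z^2)\}$, which is smooth.
\end{itemize}
I expect this to be the main point needing care: one must confirm that the three Bertini conclusions really cover every point of $\beta_m^{-1}(C_m)$. In particular, non-finite fibers are excluded because $C_m$ avoids $Z_m$. One must also check that $q_m$ was chosen off $\operatorname{Br}(\beta_m)$, so that the singular point of $C_m$ does not interact with ramification.

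\emph{Divisor class.} The curve $C_m$ lies in $|2a_m\mathcal F|$, so $\beta_m^{\ast}C_m\in|2a_m\beta_m^{\ast}\mathcal F|$, and it has multiplicity exactly $18$ at each $p_{m,i}$. Hence
\[
\mathcal D_m=\sigma_m^{\ast}\beta_m^{\ast}C_m-18\sum_i E_{m,i}\in|2\mathcal L_m|.
\]
Since $\mathcal D_m$ is smooth and lies in $|2\mathcal L_m|$, the standard construction gives a smooth double cover $\pi_m:X_m\to Y_m$ branched along $\mathcal D_m$. Locally it is $w^2=s$, with $\mathcal D_m=\{s=0\}$.

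\emph{Over $E_{m,i}$.} At each contact point choose coordinates $(x,y)$ with $E_{m,i}=\{y=0\}$ and $\mathcal D_m=\{y=x^3\}$. Restricting the cover to $E_{m,i}$ gives $w^2=-x^3$, which is a cusp of $C_{m,i}$. At points of $E_{m,i}$ off $\mathcal D_m$ the cover is étale, so $C_{m,i}$ is smooth there.

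The normalization of $w^2=x^3$ is $x=s^2$, $w=s^3$. So $\nu_{m,i}$ has multiplicity $2$ at each cusp and is an immersion elsewhere. Moreover, $\widetilde C_{m,i}\to E_{m,i}$ is a double cover ramified exactly over the six points. Equivalently, the branch divisor restricted to $E_{m,i}$ is $3\sum_{k=1}^6 x_k$, and its odd-multiplicity points are these six. The cover is connected because it is genuinely branched. Riemann--Hurwitz gives $2g-2=2(-2)+6$, so $g(\widetilde C_{m,i})=2$.
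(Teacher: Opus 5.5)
Your proposal is correct and follows the paper's proof step for step. The steps are: $\beta_m$ is a local biholomorphism at each $p_{m,i}$, so Lemma~\ref{lem:4} applies there; transversality of $C_m$ to the smooth branch locus (together with $C_m$ avoiding $Z_m$) gives smoothness elsewhere; the multiplicity-$18$ computation gives $\mathcal{D}_m\in|2\mathcal{L}_m|$; and the local model $w^2=-x^3$ together with Riemann--Hurwitz gives the cusps and genus $2$. The only cosmetic difference is that the paper writes the ramified local model as $(u^e,v)$ for general $e$ rather than $(z^2,t)$, but your curve $\{t=\phi(z^e)\}$ is smooth for every $e$, so the argument is unchanged.
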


\begin{proof}
Away from \(\beta_m^{-1}(q_m)\), the curve \(C_m\) is smooth. At points where \(\beta_m\) is \'etale, its pullback is smooth. At a point over the smooth branch locus, after analytic changes of coordinates, the finite map is locally of the form $\beta_m(u,v)=(u^e,v)$, with $\text{Br}(\beta_m)=\{x=0\}$.  Since \(C_m\) meets \(\{x=0\}\) transversely, it is locally given by $x-av+\text{higher-order terms}=0$, where $a \neq 0$. 
Its pullback is therefore $u^e-av+\text{higher-order terms}=0$, which is smooth because the \(v\)-derivative is \(-a\neq 0\). Thus \(\beta_m^*C_m\) is smooth away from \(\beta_m^{-1}(q_m)\). At each \(p_{m,i}\), the map \(\beta_m\) is \'etale, so the germ of \(\beta_m^*C_m\) at \(p_{m,i}\) is analytically isomorphic to the germ of \(C_m\) at \(q_m\). By Lemma~\ref{lem:4}, after blowing up, the strict transform \(D_m\) is smooth and meets \(E_{m,i}\) in six points, each with contact order \(3\). Since \(C_m\sim 2a_m\mathcal F\) and has multiplicity \(18\) at \(q_m\), and since \(\beta_m\) is \'etale over \(q_m\), the pullback \(\beta_m^*C_m\) has multiplicity \(18\) at each \(p_{m,i}\). Hence
\[ \mathcal{D}_m\sim \sigma_m^*\beta_m^*(2a_m\mathcal F)-18\sum_{i=1}^{\delta}E_{m,i} =2\left(\sigma_m^*(a_m\beta_m^*\mathcal{F})-9\sum_{i=1}^{\delta}E_{m,i}\right). \]
Since \( \mathcal{D}_m\) is smooth and linearly equivalent to \(2 \mathcal{L}_m\), there exists a smooth double cover \(\pi_m:X_m\to Y_m\) branched along \( \mathcal{D}_m\). 

For each fixed $i$, the map $C_{m,i}\to E_{m,i}$ is a degree-two cover that is étale over $E_{m,i}\setminus(D_m\cap E_{m,i})$ and branched over $D_m\cap E_{m,i}$. Thus all singularities of \(C_{m,i}\) lie above the six points of \( \mathcal{D}_m\cap E_{m,i}\). Near such a point, choose local coordinates \((x,y)\) on \(Y_m\) such that \[ E_{m,i}=\{x=0\}, \qquad \mathcal{D}_m=\{x-y^3=0\}. \]
Then \(X_m\) is locally given by $z^2=x-y^3$. Hence \(C_{m,i}=\pi_m^{-1}(E_{m,i})\) has local equation $z^2=-y^3,$ so \(C_{m,i}\) has a cusp at that point. Its normalization is parametrized by $t \mapsto (y,z) = (t^2, \eta t^3)$, where $\eta^2=-1$, and therefore the normalization map has multiplicity \(2\) there. It follows that the induced map \(\widetilde C_{m,i}\to E_{m,i}\simeq \mathbf P^1\) is a double cover branched exactly at the six points of \( \mathcal{D}_m\cap E_{m,i}\). A disconnected double cover of \(\mathbf P^1\) would be \'etale, hence impossible, so the cover is connected. By Riemann--Hurwitz, $2g(\widetilde C_{m,i})-2=2(-2)+6=2$,  and thus \(g(\widetilde C_{m,i})=2\). Away from the six cusps, the cover is \'etale and the normalization is an immersion.
\end{proof}

Blowing up $\delta$ points gives $K_{Y_m} = \sigma_m^{\ast} K_{S_m} + \sum_{i=1}^{\delta} E_{m,i}$, hence\begin{eqnarray*}
c_1^2(Y_m) \ = \ K_{S_m}^2 - \delta \ = \ 3c_2(S) m^4 -\delta, \qquad c_2(Y_m) \ = \ c_2(S_m) + \delta \ = \ c_2(S) m^4 + \delta. 
\end{eqnarray*} Also, $\mathcal{L}_m = \sigma_m^{\ast}(a_m \beta_m^{\ast} \mathcal{F}) - 9 \sum_{i=1}^{\delta} E_{m,i}$. Since $\sigma_m^{\ast}(\cdot) \cdot E_{m,i}=0$ and $E_{m,i}^2 = -1$, \begin{eqnarray*}
K_{Y_m} \cdot \mathcal{L}_m &=& K_{S_m} \cdot (a_m \beta_m^{\ast} \mathcal{F}) + 9 \delta \ = \ \lambda a_m m^2 + 9 \delta,
\end{eqnarray*}  and $\mathcal{L}_m^2 = a_m^2(\beta_m^{\ast} \mathcal{F})^2 - 81\delta = \mu a_m^2 - 81 \delta$. For a smooth double cover branched along $2L_m$, the canonical bundle is given by $K_{X_m} = \pi_m^{\ast} (K_{Y_m} +\mathcal{L}_m)$, so \begin{eqnarray*}
K_{X_m}^2 &=& 2(K_{Y_m}+ \mathcal{L}_m)^2 \ = \ 2K_{Y_m}^2 + 4 K_{Y_m} \cdot \mathcal{L}_m + 2 \mathcal{L}_m^2.
\end{eqnarray*} Moreover, \begin{eqnarray*}
c_2(X_m) &=& 2c_2(Y_m) + \mathcal{D}_m \cdot (\mathcal{D}_m + K_{Y_m}) \\
&=& 2c_2(Y_m) + 4 \mathcal{L}_m^2 + 2K_{Y_m} \cdot \mathcal{L}_m.
\end{eqnarray*} Since $a_m/m^2 \to t$, this gives \begin{eqnarray*}
    \frac{c_1^2(X_m)}{c_2(X_m)} \longrightarrow \Psi(t) \ : = \ \frac{3c_2(S) +2\lambda t + \mu t^2}{c_2(S) + \lambda t + 2 \mu t^2}.
\end{eqnarray*} The function $\Psi : [0,\infty) \to \mathbf{R}$ is continuous, strictly decreasing, and satisfies $\Psi(0)=3$ and $\lim_{t \to \infty} \Psi(t) = 1/2$. In particular, for any $s \in (1/2, 3)$, there is a unique $t >0$ such that $\Psi(t) = s$.

The surfaces $X_m$ are Kobayashi hyperbolic with no Hermitian metric of nonpositive holomorphic sectional curvature. Proposition~\ref{prop:branch-and-exceptional-curves} gives $2g(\widetilde{C}_{m,i}) -2=2$ and $\sum_p (m_p(\nu_{m,i})-1) = 6$, and hence, Theorem~\ref{thm:Dem-Obs} implies that there cannot be a Hermitian metric with nonpositive holomorphic sectional curvature. Indeed, apply Theorem~\ref{thm:Dem-Obs} to $\nu_{m,i} : \widetilde{C}_{m,i} \to X_m$. Let $Q_m : = \sigma_m \circ \pi_m : X_m \to S_m$. Since $S_m$ is Kobayashi hyperbolic, for any holomorphic map $f : \mathbf{C} \to X_m$, the composition $Q_m \circ f$ is constant. If the image point $Q_m(f(\mathbf{C}))$ is not one of the blown-up points $p_{m,i}$, then the fiber of $Q_m$ over it is finite, so $f$ is constant. If the image point is $p_{m,i}$, then $f(\mathbf{C}) \subset C_{m,i}$. Since $\mathbf{C}$ is normal, the map $f : \mathbf{C} \to C_{m,i}$ lifts to the normalization $\widetilde{C}_{m,i}$. But $\widetilde{C}_{m,i}$ has genus $2$, and hence, the lifted map is constant. This implies that $f$ is constant.

\subsection{Chern slopes of Demailly's examples}\label{Dem-examples}

The obstruction supplied by Theorem~\ref{thm:Dem-Obs} is the following. Suppose that a complex surface \(X\) contains a reduced irreducible curve \(C_0\), and let $\nu:\widetilde C_0\to C_0\hookrightarrow X$ be the normalization map. Demailly \cite{Demailly} constructs such a curve \(C_0\) inside a smooth projective surface \(X\) which is nevertheless Kobayashi hyperbolic. The local model for the singularity is a plane branch $$ \{w^a-z^b=0\}\subset(\mathbf C^2,0), \qquad 1<a<b,\qquad \gcd(a,b)=1.$$  Its normalization is $t\longmapsto (t^a,t^b)$, so the normalization map has multiplicity \(a\) at the point lying over the singularity. In the global construction one arranges, by imposing additional ordinary nodes if necessary, that the normalization has genus \(g\ge2\) but $a-1>2g-2$. The nodes lower the genus but do not contribute to the multiplicity term in Theorem~\ref{thm:Dem-Obs}, since the normalization map is immersive over the branches of a node.

Let \(d\) be sufficiently large, and let $\mathbf P^N=\mathbf P H^0(\mathbf P^2,\mathcal O_{\mathbf P^2}(d))$.  Let \(C_0\subset\mathbf P^2\) be an irreducible degree \(d\) plane curve with the singularities described above, and let \([P_0]\in\mathbf P^N\) be the corresponding point. Choose a representative \(P_0\) and extend it to a basis $P_0,\ldots,P_N$,  of \(H^0(\mathbf P^2,\mathcal O_{\mathbf P^2}(d))\). The universal
degree \(d\) plane curve is
\[ \mathcal U = \left\{ ([z],[\alpha])\in\mathbf P^2\times\mathbf P^N : \sum_{j=0}^N \alpha_j P_j(z)=0 \right\}. \]

Choose a smooth curve \(\Gamma\subset\mathbf P^N\) passing through \([P_0]\), of genus at least \(2\), such that: (1) the induced one-parameter deformation smooths the singularities of \(C_0\) in the total space; (2) away from \([P_0]\), the curve \(\Gamma\) meets the discriminant transversely along its nodal locus; (3) \(\Gamma\) avoids the reducible locus and the locus of curves with worse than nodal singularities. Set $X:=\mathcal U\times_{\mathbf P^N}\Gamma \subset \mathbf P^2\times\Gamma$. Then \(X\) is smooth, and the projection $f:X\to\Gamma$ is a one-parameter family of irreducible degree \(d\) plane curves whose distinguished fiber over \([P_0]\) is \(C_0\). Every singular fiber other than \(C_0\) has exactly one ordinary node.

There is no metric of $\text{HSC} \leq 0$ on $X$. Indeed, let $\nu:\widetilde C_0\to C_0\hookrightarrow X$  be the normalization map of the distinguished fiber. By construction, $$ \sum_{p\in \widetilde C_0}\bigl(m_p(\nu)-1\bigr)  \ > \  2g(\widetilde C_0)-2. $$
This contradicts Theorem~\ref{thm:Dem-Obs} with \(\kappa=0\). To see that $X$ is hyperbolic, let $h : \mathbf{C} \to X$ be a holomorphic map. Since \(g(\Gamma) \geq 2\), the composition $f\circ h:\mathbf C\to\Gamma$ is constant. Hence $ h(\mathbf{C})$  is contained in a single fiber of $f$. Every irreducible component of every fiber has normalization of genus at least \(2\). Since $\mathbf{C}$ is normal, the map \(h\) lifts to the normalization of that component. The lifted map from \(\mathbf C\) to a smooth projective curve of genus at least \(2\) is constant. Therefore $h$ is constant.

\begin{theorem} \label{thm:Dem-Chern-numbers}
Let $f : X \to \Gamma$ be Demailly's hyperbolic surface with $d \geq 4$.  Thus $X\subset \mathbf P^2\times \Gamma$ is a smooth divisor cut out by a section of $p_1^*\mathcal O_{\mathbf P^2}(d)\otimes p_2^{\ast} \mathcal{A}$, where $\mathcal{A} : = \mathcal O_{\mathbf P^N}(1)|_\Gamma$, and \(\Gamma\subset \mathbf P^N\) is a smooth curve of genus \(g=g(\Gamma)\ge 2\). Set $\ell : = \deg(A)$. Let $u :\mathbf D\to \Gamma$  be the universal cover of \(\Gamma\). Then the universal cover $\widetilde{X} \simeq X \times_{\Gamma} \mathbf{D}$ is not biholomorphic to a bounded domain. Moreover, \begin{eqnarray*}
        \frac{c_1^2(X)}{c_2(X)} &=&  2- \frac{3\ell(d+1)(d-1)}{3(d-1)^2\ell+2d(d-3)(g-1)} \ < \ 2.
\end{eqnarray*}
\end{theorem}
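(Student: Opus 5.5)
The plan has two independent parts: the non-uniformization statement, which is soft, and the Chern slope, which is an intersection computation on $\mathbf P^2\times\Gamma$.

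\emph{The universal cover is not a bounded domain.} The map $u:\mathbf D\to\Gamma$ is the universal cover, and a fiber bundle over the contractible base $\mathbf D$ lifts loops trivially in the base direction. I would therefore first argue that $\widetilde X\simeq X\times_\Gamma\mathbf D$. For this I need $\pi_1(X)\to\pi_1(\Gamma)$ to be an isomorphism, using that the fibers are plane curves and that $X$ is an ample divisor in $\mathbf P^2\times\Gamma$, via Lefschetz for $\pi_1$. This identification is the only delicate input here, and it is taken as given in the statement's "$\simeq$".

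Over any point $\tau\in\mathbf D$, the space $X\times_\Gamma\mathbf D$ contains the compact curve $f^{-1}(u(\tau))$, which is a fiber of $f$. A bounded domain $\Omega\subset\mathbf C^2$ contains no compact complex curve of positive dimension. Indeed, each coordinate function restricted to such a curve is holomorphic on a compact curve, hence constant by the maximum principle, so the curve would be a point. Any biholomorphism $\widetilde X\to\Omega$ would carry the compact fiber to a compact curve in $\Omega$, which is a contradiction.

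\emph{Chern numbers.} Write $H:=p_1^*c_1(\mathcal O_{\mathbf P^2}(1))$ and $F:=p_2^*[\mathrm{pt}]$ on $P:=\mathbf P^2\times\Gamma$. The intersection ring satisfies $H^3=0$, $F^2=0$ and $H^2F=1$. The divisor $X$ has class $dH+\ell F$, and $c(T_P)=(1+H)^3(1+(2-2g)F)$.

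For $c_1^2$, adjunction gives
\[K_X=\bigl((d-3)H+(2g-2+\ell)F\bigr)\big|_X,\]
and hence
\[c_1^2(X)=\bigl((d-3)H+(2g-2+\ell)F\bigr)^2(dH+\ell F)=\ell(d-3)^2+2d(d-3)(2g-2+\ell).\]

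For $c_2$, the normal bundle sequence gives
\[c(T_X)=c(T_P)\,(1+dH+\ell F)^{-1}\big|_X.\]
I would extract the degree-two part $3H^2+3(2-2g)HF-(dH+\ell F)\bigl(3H+(2-2g)F\bigr)+(dH+\ell F)^2$ and multiply it by $dH+\ell F$. This yields $c_2(X)=3\ell(d-1)^2/\ldots$ type terms: explicitly, a combination of $\ell(d^2-3d+3)$ and $d(d-3)(2g-2)$-type terms. As a cross-check, I would recompute $c_2$ as an Euler number, $e(X)=e(\Gamma)e(C_t)+\#\{\text{vanishing cycles}\}$. Here the fiber genus is $(d-1)(d-2)/2$, and the number of nodes equals the degree of $\Gamma$ against the discriminant, $3(d-1)^2\ell$, with the contribution of $C_0$ corrected by its Milnor numbers.

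\emph{Main obstacle.} The main obstacle is purely algebraic. I must simplify $c_1^2/c_2$ into the stated form
\[2-\frac{3\ell(d+1)(d-1)}{3(d-1)^2\ell+2d(d-3)(g-1)}.\]
My approach is to verify that $2c_2-c_1^2=3\ell(d^2-1)$ and that $c_2=3(d-1)^2\ell+2d(d-3)(g-1)$, up to the same normalization. The inequality $<2$ is then immediate, since $\ell>0$ and $d\ge4$ make both numerator and denominator positive.
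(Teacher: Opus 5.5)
Your proposal is correct and follows the paper's proof essentially step for step: Lefschetz for the ample divisor $X\subset\mathbf P^2\times\Gamma$ gives $f_*:\pi_1(X)\simeq\pi_1(\Gamma)$ and hence $\widetilde X\simeq X\times_\Gamma\mathbf D$, which the paper proves exactly as you sketch rather than taking it as given. The compact fibers then rule out a bounded domain, and adjunction plus the Whitney formula on $\mathbf P^2\times\Gamma$ give the Chern numbers; the expansion you left unfinished closes as you predict, since multiplying your degree-two class by $dH+\ell F$ gives $c_2(X)=\ell(d^2-3d+3)+d(2d-3)\ell+2d(d-3)(g-1)=3(d-1)^2\ell+2d(d-3)(g-1)$, whence $2c_2-c_1^2=3\ell(d-1)(d+1)$.
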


\begin{proof}
Set $Y:=\mathbf P^2\times \Gamma$. Since $d>0$ and $\mathcal{A}$ is ample on $\Gamma$, the line bundle $p_1^{\ast}\mathcal O_{\mathbf P^2}(d)\otimes p_2^{\ast} \mathcal{A}$ is ample on \(Y\). Hence \(X\subset Y\) is a smooth ample divisor. By the Lefschetz hyperplane theorem, the inclusion \(X\hookrightarrow Y\) induces an isomorphism $\pi_1(X)\simeq \pi_1(Y)$. Since \(\mathbf P^2\) is simply connected, projection to the second factor gives $\pi_1(Y)\simeq \pi_1(\Gamma)$. The composition is the homomorphism induced by $f: X \to \Gamma$. Thus, $f_{\ast} : \pi_1(X) \xrightarrow{ \ \simeq \ } \pi_1(\Gamma)$. The pullback $X \times_{\Gamma} \mathbf{D} \to X$ is the covering corresponding to $f_{\ast}^{-1}(\pi_{\ast} \pi_1(\mathbf{D}))$. Since $\mathbf{D}$ is simply connected and $f_{\ast}$ is an isomorphism, this group is trivial. Hence, $\widetilde{X} \simeq X \times_{\Gamma} \mathbf{D}$. Every fiber of \(\widetilde f\) is a positive-dimensional compact analytic subset. Hence, $\widetilde{X}$ cannot be biholomorphic to a domain in $\mathbf{C}^2$. 

The Chern numbers are computed in a similar fashion to the computations in Proposition~\ref{prop:Chern-Xn} and Proposition~\ref{prop:Mohsen-Chern-numbers}. Let $\mathcal{H}:=p_1^*c_1(\mathcal O_{\mathbf P^2}(1)),$ and $\mathcal{F}:=p_2^*[\mathrm{pt}]$. Then $[X]=d\mathcal{H}+\ell\mathcal{F},$ and the intersection relations on \(Y=\mathbf P^2\times\Gamma\) are $\mathcal{H}^3= \mathcal{F}^2 =0$, and $\mathcal{H}^2 \cdot \mathcal{F} =1$.  Moreover, $c_1(Y)=3 \mathcal{H}+(2-2g)\mathcal{F}$, and $c_2(Y) = 3\mathcal{H}^2+3(2-2g)\mathcal{H} \mathcal{F}$. By adjunction, $K_X=(K_Y+[X])|_X$. Since $K_Y=-3\mathcal{H}+(2g-2)\mathcal{F},$ we get $K_X=\bigl((d-3)\mathcal{H}+(\ell+2g-2)\mathcal{F}\bigr)|_X$. Therefore \begin{eqnarray*}
    c_1^2(X) &=& \bigl((d-3) \mathcal{H}+(\ell+2g-2)\mathcal{F}\bigr)^2\cdot(d \mathcal{H} +\ell \mathcal{F}) \ = \ (d-3)\bigl(3(d-1)\ell+4d(g-1)\bigr),\\
    c_2(X) &=& \bigl(c_2(Y)-c_1(Y)[X]+[X]^2\bigr)\cdot [X] \ = \ 3(d-1)^2\ell+2d(d-3)(g-1).
\end{eqnarray*} Hence $$\frac{c_1^2(X)}{c_2(X)} \ = \ 2- \frac{3\ell(d+1)(d-1)}{3(d-1)^2\ell+2d(d-3)(g-1)} \ < \ 2.$$ 
\end{proof}

\begin{figure}[htbp]
\centering
\begin{tikzpicture}[scale=0.45]
    \def\xmax{14}
    \def\ymax{10}

    \begin{scope}
        \clip (0,0) rectangle (\xmax,\ymax);
        \fill[gray!20] 
            (0,0) -- (\xmax,{(1/12)*\xmax}) -- (\xmax,{(3)*\xmax}) -- cycle;
    \end{scope}

    \draw[->] (-0.5,0) -- (\xmax+0.5,0) node[below] {\tiny $c_2$};
    \draw[->] (0,-0.5) -- (0,\ymax+0.5) node[left] {\tiny $c_1^2$};

    \draw (0,0) -- ({\ymax/3},\ymax)
        node[pos=0.65, above, sloped, fill=white, inner sep=0.5pt]
        {\tiny \text{Bogomolov--Miyaoka--Yau}};

    \draw[dashed] (0,0) -- ({\ymax/2},\ymax)
        node[right, above]
        {\tiny $c_1^2=2c_2$};

    \draw[dashed] (0,0) -- (\xmax,{(2/3)*\xmax})
        node[right]
        {\tiny $c_1^2=\frac23 c_2$};

    \draw[dashed] (0,0) -- (\xmax,{(3/7)*\xmax})  
        node[right]
        {\tiny $c_1^2=\frac12 c_2$};

    \draw[dashed] (0,0) -- (\xmax,{(2/7)*\xmax})  
        node[right]
        {\tiny $c_1^2=\frac13 c_2$};

    \draw[dashed] (0,0) -- (\xmax,{(1/12)*\xmax})  
        node[right]
        {\tiny $c_1^2=\frac27 c_2$};

\pgfmathsetmacro{\angA}{atan(1/12)}
\pgfmathsetmacro{\angB}{atan(2/3)}
\def\r{7.5}
\draw[line width=1.1pt,
  {Stealth[length=4pt,width=3.5pt]}-{Stealth[length=4pt,width=3.5pt]}] (\angA:\r) arc[start angle=\angA, end angle=\angB, radius=\r];
\node at ({(\angA+\angB)/4}:{\r+1.6}) {\tiny $\text{HSC}<0$};

\pgfmathsetmacro{\angC}{atan(2/7)}
\pgfmathsetmacro{\angD}{atan(2)}
\def\rr{10.5}

\draw[line width=1.1pt,
  {Stealth[length=4pt,width=3.5pt]}-{Stealth[length=4pt,width=3.5pt]}] (\angC:\rr) arc[start angle=\angC, end angle=\angD, radius=\rr];
\node at ({0.55*(\angC+\angD)}:{\rr+2.3}) {\tiny Demailly's examples};

\pgfmathsetmacro{\angE}{atan(3/7)}
\pgfmathsetmacro{\angF}{atan(3)}
\def\rrr{4.5}

\draw[line width=1.1pt,
  {Stealth[length=4pt,width=3.5pt]}-{Stealth[length=4pt,width=3.5pt]}] (\angE:\rrr) arc[start angle=\angE, end angle=\angF, radius=\rrr];
\node at ({(\angE+\angF)/2}:{\rrr+1.3}) {\tiny $\text{HSC} \not \leq 0$};

\end{tikzpicture}
\caption{\small{Chern slopes of the examples obtained from Theorem~\ref{thm:Lefschetz} and Theorem~\ref{thm:Chern-numbers}, compared with Demailly’s construction \cite{Demailly}.}}
\end{figure}

\bibliographystyle{alpha-author}
\bibliography{ref-clean}

\end{document}